\providecommand{\abs}[1]{\left|#1\right|}
\providecommand{\norm}[1]{\left\Vert#1\right\Vert}
\crefname{theorem}{theorem}{theorems}
\Crefname{theorem}{Theorem}{Theorems}
\newaliascnt{remark}{theorem}
\crefname{remark}{remark}{remarks}
\Crefname{remark}{Remark}{Remarks}
\newaliascnt{example}{theorem}
\crefname{example}{example}{examples}
\Crefname{example}{Example}{Examples}
\title{Magnus Methods for Stochastic Delay-Differential Equations\thanks{Submitted to the editors \protect\today.
\funding{This work was supported by the Queensland University of Technology Postgraduate Research Award (QUTPRA).}}}
\author{Mitchell T. Griggs\thanks{School of Mathematical Sciences, Queensland University of Technology (QUT), Brisbane, Australia
  (\email{mitchell.griggs@connect.qut.edu.au}, \email{kevin.burrage@qut.edu.au}, \email{pamela.burrage@qut.edu.au}).}
\and Kevin Burrage\footnotemark[2]
\and Pamela M. Burrage\footnotemark[2]}
\begin{document}
\maketitle
\begin{abstract}
This paper introduces Magnus-based methods for solving stochastic delay-differential equations (SDDEs). We construct Magnus--Euler--Maruyama (MEM) and Magnus--Milstein (MM) schemes by combining stochastic Magnus integrators with Taylor methods for SDDEs. These schemes are applied incrementally between multiples of the delay times. We present proofs of their convergence orders and demonstrate these rates through numerical examples and error graphs. Among the examples, we apply the MEM and MM schemes to both linear and nonlinear problems. We also apply the MEM scheme to a stochastic partial delay-differential equation (SPDDE), comparing its performance with the traditional Euler--Maruyama (EM) method. Under fine spatial discretization, the MEM scheme remains numerically stable while the EM method becomes unstable, yielding a significant computational advantage.
\end{abstract}

\begin{keywords}
numerical methods, stochastic differential equations, delay equations, Magnus methods
\end{keywords}

\begin{MSCcodes}
65C30, 60H35, 34K50, 65L05, 65F60
\end{MSCcodes}


\section{Introduction}\label{Section1}

This paper introduces Magnus numerical schemes for simulating solutions of the semilinear It\^o SDDE
\begin{align}
\mathrm{d}X(t)&=[A_0X(t)+f(t,X(t),X(t-\tau_1),\ldots,X(t-\tau_K))]\,\mathrm{d}t\nonumber\\
&\;+\sum_{j=1}^m[A_jX(t)+g_j(t,X(t),X(t-\tau_1),\ldots,X(t-\tau_K))]\,\mathrm{d}W_j(t),\quad t\in[0,T],\label{Equation1}\\
X(t)&=\phi(t),\quad t\in[-\tau,0],\nonumber
\end{align}
with constant matrices $A_0,\ldots,A_m\in\mathbb{R}^{d\times d}$, constant delays $\tau_1,\ldots,\tau_K>0$, largest delay $\tau=\max\{\tau_1,\ldots,\tau_K\}$, drift $f:[0,T]\times\mathbb{R}^d\times(\mathbb{R}^d)^K\to\mathbb{R}^d$ and volatility functions $g_1,\ldots,g_m:[0,T]\times\mathbb{R}^d\times(\mathbb{R}^d)^K\to\mathbb{R}^d$, independent Wiener processes $W_1,\ldots,W_m$, and history process $\phi$. Each of $f,g_1,\ldots,g_m$ is $C^1$ in the first component and $C^2$ in all other components, while the history process satisfies the moment bound $\sup_{t\in[-\tau,0]}\mathbb{E}[\abs{\phi(t)}^4]<\infty$. We stipulate $\phi$ as having bounded fourth moment in our derivation of the MM scheme, but this condition is not prohibitively restrictive as $\phi$ is often deterministic in practical applications.

Our schemes offer new tools for simulating SDDEs, which arise in a wide range of applications. These applications include modeling disease spread \cite{RihanAlsakaji2021,Shahid2024}, option pricing \cite{MaoSebanis2013,McWilliams2011,PangYong2019}, investment models \cite{Crandall2016,HeGuYao2024}, population models \cite{Mao2007,WangWangChen2019}, predator-prey models \cite{BaharMao2004,Rihan2020}, respiratory-control systems \cite{HartungTuri2013}, postural dynamics \cite{Boulet2010}, and also physical models such as Langevin equations and crossing-time problems \cite{Atay2010}. The Magnus schemes introduced in this paper also provide an efficient method to simulate certain SPDDE solutions, which we see in our final example.

The Magnus SDDE schemes are the combination of the Magnus schemes for stochastic differential equations (SDEs) by Burrage and Burrage \cite{BurrageBurrage1999} with the Taylor (EM and Milstein) SDDE schemes given by K\"uchler and Platen \cite{KuchlerPlaten2000}. K\"{u}chler and Platen produce EM and Milstein schemes for the single-delay ($K=1$) version of \cref{Equation1} by applying the EM and Milstein SDE schemes of Maruyama and Milstein \cite{Maruyama1955,Milstein1975} on each of the intervals $[0,\tau_1],[\tau_1,2\tau_1],\ldots$. This step-by-step approach is an extension from that first considered in 1963 by Bellman \cite{BellmanCooke1963}, for the deterministic case ($m=0$) of ordinary delay-differential equations. In the prior decade, Magnus \cite{Magnus1954} introduced numerical schemes for ordinary differential equations based on the exponential map, which Burrage and Burrage \cite{BurrageBurrage1999} extended to homogeneous, linear SDEs, developing the Magnus SDE schemes as part of Burrage's doctoral research \cite{Burrage1999}. Convergence analysis of these Magnus schemes (for linear, homogeneous SDEs) was conducted in Stratonovich setting, by Lord, Malham, and Wiese \cite{LordMalhamWiese2008}. Yang, Burrage, Komori, Burrage, and Ding \cite{BurrageBurrage2021} extended the Magnus SDE schemes further, to the case of semilinear SDEs.

We present the MEM and MM schemes, in \cref{Section2}. We then give the derivation of these schemes, in \cref{Section3}. We also show examples of the schemes, in \cref{Section4}.

\subsection{Motivation}

This paper focuses on the SDDE \cref{Equation1}, but the Magnus SDDE schemes that we introduce here provide a method to efficiently simulate SPDDEs. As an example, we consider the one-dimensional (non-delayed) stochastic heat equation with cooling,
\begin{align}
\mathrm{d}U(t,x)&=\left[D\frac{\partial^2U(t,x)}{\partial x^2}+C_U(t,x)\right]\,\mathrm{d}t\label{Equation2}\\
&\hspace{2cm}+cU(t,x)\,\mathrm{d}W^c(t,x),\quad(t,x)\in(0,\infty)\times(0,1),\nonumber\\
\vphantom{\int}U(t,0)&=U(t,1)=0,\quad U(t,x)=T_0(x),\quad x\in[0,1],\nonumber
\end{align}
where $U$ is the solution process (temperature), $T_0:[0,1]\to\mathbb{R}$ is the initial temperature, $D>0$ is the diffusion coefficient, $C_U:(0,\infty)\times\mathbb{R}\to\mathbb{R}$ is a cooling function applied to alter the temperature $U$, and $c>0$ is a scaling constant. The stochastic process $W^c$ is a $Q$-Wiener process, as defined by Lord, Powell, and Shardlow \cite{LordPowellShardlow2014}, but we also provide this definition in \cref{AppendixSPDE}. In order to account for delayed response time, the cooling is applied with a time delay of $\tau$. We model this by extending the domain of the cooling function, so that with $C_U:(-\tau,\infty)\times\mathbb{R}\to\mathbb{R}$, the stochastic partial differential \cref{Equation2} becomes the SPDDE
\begin{align}
\mathrm{d}U(t,x)&=\left[D\frac{\partial^2U(t,x)}{\partial x^2}+C_U(t-\tau,x)\right]\,\mathrm{d}t\label{Equation3}\\
&\hspace{2cm}+cU(t,x)\,\mathrm{d}W^c(t,x),\quad(t,x)\in(0,\infty)\times(0,1),\nonumber\\
\vphantom{\int}U(t,0)&=U(t,1)=0,\quad U(t,x)=T_0(x),\quad (t,x)\in[-\tau,0]\times[0,1],\nonumber
\end{align}
with the temperature held fixed on $[-\tau,0]$. To simulate a solution of \cref{Equation3}, we apply a forward time-centered space method from the work of Roache \cite{Roache1972}, where we select a spatial step size of $\Delta x=1/d$ for some $d\in\mathbb{N}$, and then approximate the solution $U(\cdot,x_j)$, at the cross sections $x=x_j=j\,\Delta x$ for $j=0,\ldots,d$, with the system of SDDEs
\begin{align}
\mathrm{d}U_0(t)&=0\,\mathrm{d}t,\label{Equation4}\\
\mathrm{d}U_j(t)&=\left[D\frac{U_{j-1}(t)+U_{j+1}(t)-2U_j(t)}{(\Delta x)^2}+C_U(t-\tau,x_j)\right]\,\mathrm{d}t+\frac{cU_j(t)}{\sqrt{\Delta x}}\,\mathrm{d}W_j^c(t),\nonumber\\
\mathrm{d}U_d(t)&=0\,\mathrm{d}t,\nonumber
\end{align}
for $j=1,\ldots,d-1$ and $t\geqslant0$, equipped with $(U_0(t),\ldots,U_d(t))=(T_0(x_0),\ldots,T_0(x_d))$ when $t\in[-\tau,0]$. We denote by $W_j^c$ the cross section $(W^c(t,x_j))_{t\in[0,\infty)}$ of $W^c$.

The simplest method for simulating \cref{Equation4} is the Euler--Maruyama scheme, but that is unfortunately restricted by the need for a small step size in time. If we ignore the cooling function and noise (supposing $C_U=0$ and $c=0$), then applying the EM method (which reduces to the regular Euler scheme) to the system \cref{Equation4} requires a temporal step size $h$ that satisfies the Neumann stability condition
\begin{equation}2D\,h<(\Delta x)^2.\label{StabilityCondition}\end{equation}
Details of this requirement may be found in the doctoral thesis by Hayman \cite{Hayman1988}. When a fine spatial discretization is required, the requirement that $h$ decreases quadratically with $\Delta x$ requires significant computation time for the Euler implementation. The Magnus schemes that we introduce in this paper are not hindered by this condition.

\section{Magnus SDDE schemes}\label{Section2}

This section presents the MEM and MM schemes, beginning with a review of existing SDDE Taylor and SDE Magnus methods. In this paper, $Y=(Y_n)_{n=0,\ldots,N}$ denotes a numerical approximation of the solution $X=(X(t))_{t\in[0,T]}$ of an SDE, and $Y=(Y_n)_{n=-p,\ldots,N}$ denotes a numerical approximation for the solution $X=(X(t))_{t\in[-\tau,T]}$ of \cref{Equation1}. That is, in each case, $Y$ is a finite sequence where each $Y_n=Y(t_n)$ is an approximation of $X(t_n)$ at the time point $t_n$, where $(t_n)_{n=-p,\ldots,N}$ is a time mesh satisfying $t_n<t_{n+1}$ for each $n=-p,\ldots,N$, as well as $t_{-p}=-\tau$, $t_0=0$, and $t_N=T$.

\subsection{Existing Taylor SDDE schemes and Magnus SDE methods}\label{Section2.1}

\begin{definition}
A stochastic process $X=(X(t))_{t\in[-\tau,T]}$ is a \emph{(strong) solution} of \cref{Equation1} if $X$ satisfies $X(t)=\phi(t)$ when $t\leqslant0$, and
\begin{align*}
X(t)&=X(0)+\int_0^t[A_0X(s)+f(s,X(s),X(s-\tau_1),\ldots,X(s-\tau_K))]\,\mathrm{d}s\\
&\quad\quad+\sum_{j=1}^m\int_0^t[A_jX(s)+g_j(s,X(s),X(s-\tau_1),\ldots,X(s-\tau_K))]\,\mathrm{d}W_j(s),
\end{align*}
for all $t\in[0,T]$. The solutions of \cref{Equation1} are \emph{(pathwise) unique} if whenever $X=(X(t))_{t\in[-\tau,T]}$ and $Y=(Y(t))_{t\in[-\tau,T]}$ are solutions then
\[\mathbb{P}\left(\sup_{-\tau\leqslant s\leqslant t}\abs{Y(s)-X(s)}>0\right)=0,\textrm{ for every }t\in[-\tau,T].\]
\end{definition}

In order to ensure (pathwise) existence and uniqueness of a (strong) solution for \cref{Equation1}, we assume that the following linear-growth and spatial Lipschitz bounds hold. Letting $g_0=f$, for each $j=0,\ldots,m$, there exist constants $L_j^{(1)},L_j^{(2)}>0$ such that
\begin{align}\abs{g_j(t,x_0,x_1,\ldots,x_K)}^2&\leqslant L_j^{(1)}\bigg(1+\sum_{k=0}^K\abs{x_k}^2\bigg)\quad\textrm{and}\label{LinearGrowthBound}\\
\abs{g_j(t,x_0,x_1,\ldots,x_K)-g_j(t,y_0,y_1,\ldots,y_K)}&\leqslant L_j^{(2)}\sum_{k=0}^K\abs{x_k-y_k},\label{LipschitzBound}\end{align}
for all $(t,x_0,x_1,\ldots,x_K),(t,y_0,y_1,\ldots,y_K)\in[0,T]\times\mathbb{R}^d\times(\mathbb{R}^d)^K$. With these conditions, the following existence-uniqueness result is similar to that of the single-delay case, as shown by K\"uchler and Platen \cite{KuchlerPlaten2000}, but with additional terms. We also provide an alternative proof of the following theorem, in the supplementary material \cref{AlternativeProofofPLS}, based on the work by Kloeden and Shardlow \cite{KloedenShardlow2012}. Further existence-uniqueness results for \cref{Equation1} may be found in the texts by Mohammed \cite{Mohammed1998} and Mao \cite{Mao2007}.

\begin{theorem}\label{PLS}
Suppose that $f:[0,T]\times\mathbb{R}^d\times(\mathbb{R}^d)^K\to\mathbb{R}^d$ is Lebesgue integrable, each $g_j:[0,T]\times\mathbb{R}^d\times(\mathbb{R}^d)^K\to\mathbb{R}^d$ is square integrable, and also that each $f,g_j$ satisfies the linear-growth and Lipschitz bounds \cref{LinearGrowthBound} and \cref{LipschitzBound}. Additionally, suppose that $\phi$ is a stochastic process with $\int_{-\tau}^0\mathbb{E}[\abs{\phi(s)}^2]\,\mathrm{d}s<\infty$. Under these assumptions, there exists a pathwise-unique, strong solution $(X(t))_{t\in[-\tau,T]}$ of \cref{Equation1}, satisfying $\sup_{t\in[-\tau,T]}\mathbb{E}[\abs{X(t)}^2]<\infty$.
\end{theorem}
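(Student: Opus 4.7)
The plan is to prove \cref{PLS} by the classical method of steps, reducing the SDDE to a finite sequence of SDEs with random, adapted, time-dependent coefficients on successive short intervals, and invoking standard SDE existence-uniqueness on each. Set $\delta=\min\{\tau_1,\ldots,\tau_K\}>0$ and partition $[0,T]$ by $t_n=n\delta\wedge T$ for $n=0,1,\ldots,N$ with $N=\lceil T/\delta\rceil$. The key observation is that for $t\in[t_n,t_{n+1}]$ and each $k\in\{1,\ldots,K\}$, one has $t-\tau_k\leqslant t_{n+1}-\delta\leqslant t_n$, so every delayed argument $X(t-\tau_k)$ is either prescribed by $\phi$ or already constructed on a previous subinterval.

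First I would handle the base interval $[0,\delta]$. Here every $X(t-\tau_k)$ equals $\phi(t-\tau_k)$, so the SDDE collapses to the (non-delayed) SDE
\begin{equation*}
\mathrm{d}X(t)=[A_0X(t)+\tilde{f}(t,X(t))]\,\mathrm{d}t+\sum_{j=1}^m[A_jX(t)+\tilde{g}_j(t,X(t))]\,\mathrm{d}W_j(t),
\end{equation*}
with random, adapted drift $\tilde{f}(t,x)=f(t,x,\phi(t-\tau_1),\ldots,\phi(t-\tau_K))$ and similarly defined $\tilde{g}_j$. The assumed bounds \cref{LinearGrowthBound} and \cref{LipschitzBound} translate to Lipschitz continuity in $x$ with deterministic constant $L_j^{(2)}$ and linear growth of the form $|\tilde{g}_j(t,x)|^2\leqslant L_j^{(1)}(1+|x|^2+\sum_k|\phi(t-\tau_k)|^2)$; the random additive term is integrable by the $L^2$ hypothesis on $\phi$. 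Classical SDE theory (e.g.\ Mao \cite{Mao2007}) then yields a pathwise unique strong solution on $[0,\delta]$ with $\sup_{t\in[0,\delta]}\mathbb{E}[|X(t)|^2]<\infty$.

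The inductive step is then essentially identical. Assuming a pathwise unique solution has been constructed on $[-\tau,t_n]$ with $\sup_{s\in[-\tau,t_n]}\mathbb{E}[|X(s)|^2]<\infty$, define on $[t_n,t_{n+1}]$ the random coefficients $\tilde{f}_n(t,x)$ and $\tilde{g}_{j,n}(t,x)$ by substituting the already-known values of $X(t-\tau_k)$ into $f$ and $g_j$. These are adapted, Lipschitz in $x$ with the same constant $L_j^{(2)}$, and satisfy the analogous linear-growth bound with integrable random additive term. Invoking the standard SDE result on $[t_n,t_{n+1}]$ with initial condition $X(t_n)$ produces the unique extension, and a Gronwall argument applied to the second-moment inequality derived from the Itô isometry propagates the moment bound to $[-\tau,t_{n+1}]$. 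After $N$ steps one obtains a pathwise unique strong solution on $[-\tau,T]$; uniqueness on the whole interval follows from the stepwise uniqueness by piecing paths together.

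The main obstacle will be the bookkeeping rather than any deep difficulty: one has to verify carefully that at each step the delayed arguments produce coefficients that are jointly measurable and adapted, and that the moment bounds used to invoke the classical SDE existence theorem are inherited from the previous step. Since only finitely many steps are required, any multiplicative growth of Gronwall constants across steps compounds to a finite final constant, so the uniform moment bound $\sup_{t\in[-\tau,T]}\mathbb{E}[|X(t)|^2]<\infty$ survives. The supplementary alternative proof via Kloeden--Shardlow \cite{KloedenShardlow2012} would instead run a single global Picard iteration on $C([0,T];L^2(\Omega))$, contracting under a weighted sup-norm, but the step-by-step method above is closer to the construction used in the single-delay theorem of K\"uchler and Platen \cite{KuchlerPlaten2000} and parallels the schemewise structure exploited in the subsequent numerical analysis.
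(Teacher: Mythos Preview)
Your proposal is correct and follows the K\"uchler--Platen step-by-step approach that the paper explicitly cites as the natural route. The paper's own proof, however, takes the shorter alternative you mention at the end: rather than carrying out the method of steps, the supplementary material simply observes that the Kloeden--Shardlow existence-uniqueness theorem for SDDEs with general path dependence specializes to \cref{Equation1} by restricting the functional dependence on $(X(s))_{s\in[-\tau,t]}$ to the finitely many values $X(t),X(t-\tau_1),\ldots,X(t-\tau_K)$. Your argument is more self-contained and mirrors the Bellman-interval structure used later in the numerical analysis, at the cost of the bookkeeping you identify; the paper's reduction is a one-line citation but relies on a heavier external result. One minor point: avoid reusing the symbol $\tilde{f}$ for your random-coefficient drift, since the paper reserves $\tilde{f}=f-\sum_jA_jg_j$.
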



The SDDE Taylor schemes by K\"uchler and Platen \cite{KuchlerPlaten2000} are defined for the equation
\begin{align}
\mathrm{d}X(t)&=f(t,X(t),X(t-\tau_1),\ldots,X(t-\tau_K))\,\mathrm{d}t\label{Equation5}\\
&\quad+\sum_{j=1}^mg_j(t,X(t),X(t-\tau_1),\ldots,X(t-\tau_K))\,\mathrm{d}W_j(t),\quad t\in[0,T],\nonumber\\
X(t)&=\phi(t),\quad t\in[-\tau,0].\nonumber
\end{align}
The numerical scheme $Y=(Y_n)_{n=-p,\ldots,N}$ is the Euler--Maruyama approximation for \cref{Equation5} when $Y_n=\phi(t_n)$ if $n\leqslant0$, and
\begin{align}
Y_{n+1}&=Y_n+f(t_n,Y_n,Y^{\tau_1}_n,\ldots,Y^{\tau_K}_n)\,h_n\label{EM}\\
&\quad+\sum_{j=1}^mg_j(t_n,Y_n,Y^{\tau_1}_n,\ldots,Y^{\tau_K}_n)\,\Delta W_j(t_n,t_{n+1}),\quad n=0,\ldots,N-1,\nonumber
\end{align}
where $h_n=t_{n+1}-t_n$ is the step size on $[t_n,t_{n+1}]$, while $Y^{\tau_k}_n=Y(t_n-\tau_k)$ and $\Delta W_j(t_n,t_{n+1})=W_j(t_{n+1})-W_j(t_n)$ for each $k=1,\ldots,K$ and $j=1,\ldots,m$. The Milstein approximation for \cref{Equation5} is defined similarly, but with \cref{EM} replaced by
\begin{align}Y_{n+1}&=Y_n\!+\!f(t_n,Y_n,Y^{\tau_1}_n,\ldots,Y^{\tau_K}_n)\,h_n\!+\!\sum_{j=1}^m\!g_j(t_n,Y_n,Y^{\tau_1}_n,\ldots,Y^{\tau_K}_n)\,\Delta W_j(t_n,t_{n+1})\nonumber\\
&\quad+\sum_{i=1}^m\sum_{j=1}^m\nabla_x g_j(t_n,Y_n,Y^{\tau_1}_n,\ldots,Y^{\tau_K}_n)\cdot g_i(t_n,Y_n,Y^{\tau_1}_n,\ldots,Y^{\tau_K}_n)\,I_{ij}(t_n,t_{n+1})\label{Milstein}\\
&\quad+\sum_{k=1}^K\mathbb{I}(t_n\geqslant\tau_k)\,\sum_{i=1}^m\sum_{j=1}^m\bigg(\nabla_{x_{\tau_k}} g_j(t_n,Y_n,Y^{\tau_1}_n,\ldots,Y^{\tau_K}_n) \nonumber\\
&\phantom{\quad+\sum_{k=1}^K\mathbb{I}(t_n\geqslant\tau_k)\,\sum_{i=1}^m\sum_{j=1}^m}\quad \cdot g_i(t_n^{\tau_k},Y^{\tau_k}_n,Y^{\tau_1,\tau_k}_n,\ldots,Y^{\tau_K,\tau_k}_n)\bigg)\,I_{ij}^{\tau_k}(t_n,t_{n+1}),\nonumber\end{align}
for $n=0,\ldots,N-1$, where $t_n^{\tau_k}=t_n-\tau_k$ and $Y_n^{\tau_l,\tau_k}=Y(t_n-\tau_l-\tau_k)$ for $k,l=1,\ldots,K$, $\mathbb{I}$ is the indicator function defined by
\[\mathbb{I}(A)=\begin{cases}1&\textrm{if }A\textrm{ is true},\\ 0&\textrm{otherwise},\end{cases}\]
$I_{ij}(t_n,t_{n+1})$ and $I_{ij}^{\tau_k}(t_n,t_{n+1})$ are iterated stochastic integrals, respectively defined by
\begin{align}
I_{ij}(t_n,t_{n+1})&=\int_{t_n}^{t_{n+1}}\int_{t_n}^s\,\mathrm{d}W_i(u)\,\mathrm{d}W_j(s)\quad\textrm{and}\label{IijandIijtauk}\\ I_{ij}^{\tau_k}(t_n,t_{n+1})&=\int_{t_n}^{t_{n+1}}\int_{t_n}^s\,\mathrm{d}W_i(u-\tau_k)\,\mathrm{d}W_j(s),\nonumber
\end{align}
and $\nabla_x$, $\nabla_{x_{\tau_k}}$ are spatial and delayed Jacobians, defined by
\[\nabla_{x_{\tau_k}}g_j(t,\!x,\!x_{\tau_1},\!\ldots,\!x_{\tau_K})=\left(
                          \begin{array}{ccc}
                            \frac{\partial g_{1,j}}{\partial x_1^{\tau_k}}(t,\!x,\!x_{\tau_1},\!\ldots,\!x_{\tau_K}) & \cdots & \frac{\partial g_{1,j}}{\partial x_d^{\tau_k}}(t,\!x,\!x_{\tau_1},\!\ldots,\!x_{\tau_K}) \\
                            \vdots & \ddots & \vdots \\
                            \frac{\partial g_{d,j}}{\partial x_1^{\tau_k}}(t,\!x,\!x_{\tau_1},\!\ldots,\!x_{\tau_K}) & \cdots & \frac{\partial g_{d,j}}{\partial x_d^{\tau_k}}(t,\!x,\!x_{\tau_1},\!\ldots,\!x_{\tau_K}) \\
                          \end{array}
                        \right),\]
for $k=0,1,\ldots,K$, writing $x_{\tau_0}=x=(x_1,\ldots,x_d)^\intercal\in\mathbb{R}^d$, $x_{\tau_k}=(x_1^{\tau_k},\ldots,x_d^{\tau_k})^\intercal\in\mathbb{R}^d$, and $g_j=(g_{1,j},\ldots,g_{d,j})^\intercal\in\mathbb{R}^d$, for each $j=1,\ldots,m$.

The Magnus schemes by Yang, Burrage, Komori, Burrage, and Ding \cite{BurrageBurrage2021} numerically solve the equation
\begin{align}
\mathrm{d}X(t)&=[A_0X(t)\!+\!f(t,X(t))]\,\mathrm{d}t+\sum_{j=1}^m[A_jX(t)\!+\!g_j(t,X(t))]\,\mathrm{d}W_j(t),\;t\in[0,T],\label{Equation6}
\end{align}
where $X(0)=\xi$ is a random variable with a bounded fourth moment. These schemes are derived by first assuming that $X$ is of the form $X(t)=X_H(t)X_R(t)$ for solution $X_H$ of the homogeneous $d\times d$ matrix SDE
\begin{align}
\mathrm{d}X_\mathrm{H}(t)&=A_0X_\mathrm{H}(t)\,\mathrm{d}t+\sum_{j=1}^mA_jX_\mathrm{H}(t)\,\mathrm{d}W_j(t),\quad t\in[0,T],\label{dXH}
\end{align}
where $X_H(0)=I_d$ is the $d\times d$ identity matrix. By applying the vectorized It\^o expansion to the differential $\mathrm{d}X(t)=\mathrm{d}(X_H(t)X_R(t))$ (see \cite[page 6]{BurrageBurrage2021}), it follows that
\begin{align}
X_R(t)&=\xi+\int_0^tX_H^{-1}(s)\tilde{f}(s,X(s))\,\mathrm{d}s+\sum_{j=1}^m\int_0^tX_H^{-1}(s)g_j(s,X(s))\,\mathrm{d}W_j(t),\label{XR}
\end{align}
where $\tilde{f}(t,x)=f(t,x)-\sum_{j=1}^mA_jg_j(t,x)$. The Magnus schemes are produced by applying Magnus expansions to \cref{dXH} while also applying Taylor approximations to \cref{XR}. The MEM scheme is the numerical scheme $Y=(Y_n)_{n=0,\ldots,N}$ where $Y_n=Y(t_n)$ is defined at the time point $t_n$, with $t_0=0$ and $t_N=T$, by $Y_0=\xi$ and
\begin{align}
&\Omega^{[1]}(t_n,t_{n+1})=\bigg(A_0-\frac{1}{2}\sum_{j=1}^mA_j^2\bigg)\,h_n+\sum_{j=1}^mA_j\,\Delta W_j(t_n,t_{n+1}),\label{MEMOmega1}\\
&Y_{n+1}=\exp(\Omega^{[1]}(t_n,t_{n+1}))\bigg\{\!Y_n\!+\!\tilde{f}(t_n,Y_n)\,h_n\!+\!\sum_{j=1}^mg_j(t_n,Y_n)\,\Delta W_j(t_n,t_{n+1})\!\bigg\}\label{MEMYnplus1}
\end{align}
for $n=0,\ldots,N-1$. The MM scheme is defined similarly but with \cref{MEMOmega1} and \cref{MEMYnplus1} respectively replaced by
\begin{align}
&\hspace{-1cm}\Omega^{[2]}(t_n,t_{n+1})=\Omega^{[1]}(t_n,t_{n+1})+\frac{1}{2}\sum_{i=0}^m\sum_{j=i+1}^m[A_i,A_j](I_{ji}(t_n,t_{n+1})-I_{ij}(t_n,t_{n+1})),\nonumber\\
Y_{n+1}&=\exp(\Omega^{[2]}(t_n,t_{n+1}))\bigg\{Y_n+\tilde{f}(t_n,Y_n)\,h_n+\sum_{j=1}^mg_j(t_n,Y_n)\,\Delta W_j(t_n,t_{n+1})\nonumber\\
&\;\;+\sum_{i=1}^m\sum_{j=1}^m\left(\vphantom{\sum}\nabla_xg_j(t_n,\!Y_n)\cdot\left[A_iY_n+g_i(t_n,\!Y_n)\right]-A_ig_j(t_n,\!Y_n)\vphantom{\sum}\right)\,I_{ij}(t_n,\!t_{n+1})\bigg\},\label{MMYnplus1}
\end{align}
where $[\cdot,\cdot]$ denotes the Lie bracket, defined by $[A,B]=AB-BA$ for square matrices $A$ and $B$, while
\begin{equation*}I_{0j}(t_n,t_{n+1})=\int_{t_n}^{t_{n+1}}\int_{t_n}^s\,\mathrm{d}u\,\mathrm{d}W_j(s)\quad\textrm{and}\quad I_{j0}(t_n,t_{n+1})=\int_{t_n}^{t_{n+1}}\int_{t_n}^s\,\mathrm{d}W_j(u)\,\mathrm{d}s\end{equation*}
are iterated stochastic integrals that satisfy the relation $I_{0j}(t_n,t_{n+1})+I_{j0}(t_n,t_{n+1})=h_n\,\Delta W_j(t_n,t_{n+1})$, for each $j=1,\ldots,m$. The Magnus schemes may be generalised further. Kamm, Pagliarani, and Pascucci \cite{Kamm2021} give a general theorem that may be used to iteratively find the $q^\mathrm{th}$-order Magnus approximation $\exp(\Omega^{[q]}(0,t))$ of the solution $X_H$ to the linear homogeneous SDE \cref{dXH}
 as well as extend this result to the case of each $A_j$ being a bounded stochastic process independent of the Wiener processes. Derivation of the Magnus schemes, as well as the stochastic Taylor schemes, is contingent on the following results.


\begin{theorem}\label{FubiniTonelliIto}
If $y:[0,T]\times\mathbb{R}^d\to\mathbb{R}^d$ is an adapted stochastic process with bounded first moment, then
\begin{equation}\mathbb{E}\bigg[\int_{t_n}^{t_{n+1}}y(s,X(s))\,\mathrm{d}s\bigg]=\int_{t_n}^{t_{n+1}}\mathbb{E}[y(s,X(s))]\,\mathrm{d}s.\label{FubiniTonelliResult}\end{equation}
Further to this, if $y$ has a bounded second moment, then for each $j=1,\ldots,m$,
\begin{equation}\mathbb{E}\bigg[\abs{\int_{t_n}^{t_{n+1}}y(s,X(s))\,\mathrm{d}W_j(s)}^2\bigg]=\int_{t_n}^{t_{n+1}}\mathbb{E}\left[\abs{y(s,X(s))}^2\right]\,\mathrm{d}s.\label{ItoIsometry}\end{equation}
\end{theorem}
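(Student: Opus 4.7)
The plan is to treat the two assertions separately, since the first is a Fubini-Tonelli statement about a deterministic-time integral against $\mathrm{d}s$, while the second is the classical It\^o isometry for stochastic integrals against $\mathrm{d}W_j(s)$.

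For \cref{FubiniTonelliResult}, the first step is to verify absolute integrability on the product space $[t_n,t_{n+1}]\times\Omega$ with respect to the product measure $\lambda\otimes\mathbb{P}$. Since $y$ has bounded first moment, there is a constant $M$ with $\mathbb{E}[\abs{y(s,X(s))}]\leqslant M$ for every $s\in[t_n,t_{n+1}]$, so
\[\int_{t_n}^{t_{n+1}}\mathbb{E}[\abs{y(s,X(s))}]\,\mathrm{d}s\leqslant M(t_{n+1}-t_n)<\infty.\]
Measurability of $(s,\omega)\mapsto y(s,X(s,\omega))$ in the product $\sigma$-algebra follows from adaptedness of the integrand combined with progressive measurability, which is standard for continuous adapted processes. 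Fubini-Tonelli then permits exchanging the order of integration, yielding \cref{FubiniTonelliResult}. This step is essentially routine.

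For \cref{ItoIsometry}, I would invoke the standard construction of the It\^o integral. First I would verify that $s\mapsto y(s,X(s))$ lies in the Hilbert space $\mathcal{H}^2([t_n,t_{n+1}])$ of adapted, jointly measurable processes with $\int_{t_n}^{t_{n+1}}\mathbb{E}[\abs{y(s,X(s))}^2]\,\mathrm{d}s<\infty$; this uses the bounded second moment hypothesis exactly as above. Next I would recall the isometry for simple (piecewise-constant adapted) integrands $y^{(n)}$, where it follows by direct computation from the independence and mean-zero property of the Wiener increments across disjoint subintervals, i.e.\ $\mathbb{E}[(W_j(b)-W_j(a))^2\mid\mathcal{F}_a]=b-a$ and $\mathbb{E}[(W_j(b)-W_j(a))(W_j(d)-W_j(c))]=0$ for $a<b\leqslant c<d$. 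Then I would approximate $y(\cdot,X(\cdot))$ in $\mathcal{H}^2$-norm by a sequence of such simple processes and pass to the limit on both sides, using that the It\^o integral $\mathcal{I}:\mathcal{H}^2\to L^2(\Omega)$ extends continuously and isometrically from simple integrands.

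The main obstacle, if any, is not analytical but notational: the theorem as stated is a specialisation of well-known results, so the substantive work lies only in checking that the moment and measurability hypotheses on $y(\cdot,X(\cdot))$ (not on a generic process) are satisfied. Because $X$ is the pathwise-unique strong solution from \cref{PLS} with $\sup_{t\in[-\tau,T]}\mathbb{E}[\abs{X(t)}^2]<\infty$, and $y$ is assumed adapted with bounded first (respectively second) moment, both hypotheses are available, so each step reduces to citing the appropriate classical result.
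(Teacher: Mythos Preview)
Your proposal is correct and matches the paper's approach: the paper does not give an original proof but simply identifies \cref{FubiniTonelliResult} as the Fubini--Tonelli theorem for the product of Lebesgue and probability measures, identifies \cref{ItoIsometry} as the It\^o isometry, and refers to a standard text (Billingsley) for the proofs. Your write-up is more detailed in checking the hypotheses, but the substance is the same.
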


The equality \cref{FubiniTonelliResult} is the Fubini--Tonelli Theorem, applied in the particular case of the measures being Lebesgue and probability measures. The second equality, \cref{ItoIsometry}, is the It\^o isometry. Proofs of these are available in standard texts on measure theory and probability, such as the text by Billingsley \cite{Billingsley1995}.

\begin{definition}
Suppose $Y=(Y_n)_{n=-p,\ldots,N}$ is a numerical approximation for the solution $X=(X(t))_{t\in[-\tau,T]}$ of \cref{Equation1}, with fixed step size $h=h_n=t_{n+1}-t_n$, for each $n=-p,\ldots,N-1$. The scheme $Y$ has (strong) \emph{order of convergence} (OoC) $q$ if there is a constant $c>0$ such that $\mathbb{E}\left[\abs{Y_n-X(t_n)}\right]\leqslant ch^q$, for all $n=-p,\ldots,N$. Similarly, $Y$ has \emph{mean-square order of convergence} (MS OoC) $q$ if there is a constant $c>0$ satisfying $\left(\mathbb{E}\left[\abs{Y_n-X(t_n)}^2\right]\right)^{1/2}\leqslant ch^q$, for all $n=-p,\ldots,N$.
\end{definition}

Generally, if $Y$ has MS OoC $q$ then it also has strong OoC $q$ (see \cite[page 5]{MilsteinTretyakov2021}), which can be shown with the H\"older inequality (for probability spaces). Another measurement of the accuracy for a numerical scheme is its weak error, which is the error in the moments of (smooth functions of) the scheme, but we do not study these errors in this paper. However, we do employ the following result by Milstein \cite{Milstein1995}, to prove the convergence orders for our schemes. This result relates the weak and mean-square errors of each single (local) step of a numerical scheme to its total (global) convergence order. That is, this result relates local errors of $Y_{n+1}$, conditional on the value of $Y_n$, to the global error in $Y$. Yang, Burrage, Komori, Burrage, and Ding \cite{BurrageBurrage2021} also use this result to prove the orders of convergence for their Magnus schemes \cref{MEMYnplus1} and \cref{MMYnplus1}. We later restate this result for SDDEs, before we then derive the MS OoC for each of our schemes.

\begin{theorem}\label{GeneralTheoremofMilstein}
Let $Y=(Y_n)_{n=0,1,\ldots,N}$ be a numerical scheme with fixed step size $h=h_n=t_{n+1}-t_n$, approximating the solution $X=(X(t))_{t\in[0,T]}$ of
\[\mathrm{d}X(t)=f(t,X(t))\,\mathrm{d}t+\sum_{j=1}^mg_j(t,X(t))\,\mathrm{d}W_j(t),\quad t\in[0,T],\]
where $f$ and each $g_j$ are spatially Lipschitz with linear-growth bound. Furthermore, suppose $X$ is adapted, $X(0)$ is independent of the Wiener processes, and $\mathbb{E}[\abs{X(0)}^2]<\infty$. If there is a constant $M>0$ such that
\begin{align*}
\abs{\mathbb{E}[Y_{n+1}|Y_n=x]-\mathbb{E}[X(t_{n+1})|X(t_n)=x]}&\leqslant M\left(1+\abs{x}^2\right)^{1/2}h^{q_1}\quad\textrm{and}\\
\left(\mathbb{E}\left[\abs{Y_{n+1}-X(t_{n+1})}^2|Y_n=X(t_n)=x\right]\right)^{1/2}&\leqslant M\left(1+\abs{x}^2\right)^{1/2}h^{q_2},
\end{align*}
for any $n=0,1,\ldots,N-1$ and $x\in\mathbb{R}^d$, where $q_2\geqslant 1/2$ and $q_1\geqslant q_2+1/2$, then
\begin{equation}\left(\mathbb{E}\left[\abs{Y_n-X(t_n)}^2|Y_0=X(0)\right]\right)^{1/2}\leqslant M\left(1+\mathbb{E}\left[\abs{X(0)}^2\right]\right)^{1/2}h^{q_2-1/2}\label{MSBound}\end{equation}
holds for every $n=0,1,\ldots,N$.
\end{theorem}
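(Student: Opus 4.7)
The plan is to follow Milstein's classical argument: decompose the global error at step $n+1$ as a one-step (local) error plus the propagation of the previous global error through the exact SDE flow, derive a recursive inequality for the mean-squared error, and close it by a discrete Gronwall estimate. Let $\bar X_{n+1}^z$ denote the exact solution at $t_{n+1}$ of the SDE restarted from $z$ at time $t_n$ using the Brownian increments on $[t_n,t_{n+1}]$, and set $e_n := Y_n - X(t_n)$. Then
\[e_{n+1} = \bigl(\bar X_{n+1}^{Y_n} - X(t_{n+1})\bigr) + \bigl(Y_{n+1} - \bar X_{n+1}^{Y_n}\bigr) =: \delta_n + \rho_n,\]
where the theorem's hypotheses apply to $\rho_n$ with $x = Y_n$, and $\delta_n = \bar X_{n+1}^{Y_n} - \bar X_{n+1}^{X(t_n)}$ is the propagation of $e_n$ through a coupled pair of exact flows.

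First I would record two auxiliary bounds. For the coupled exact flow, the Lipschitz and linear-growth hypotheses, combined with \cref{FubiniTonelliIto}, yield $\mathbb{E}[\abs{\delta_n}^2\mid\mathcal{F}_{t_n}] \leqslant (1+Ch)\abs{e_n}^2$, $\abs{\mathbb{E}[\delta_n - e_n\mid\mathcal{F}_{t_n}]} \leqslant Ch\abs{e_n}$, and the remainder estimate $\mathbb{E}[\abs{\delta_n - e_n}^2\mid\mathcal{F}_{t_n}] \leqslant Ch\abs{e_n}^2$. A parallel induction in $n$, applied to the numerical scheme using the hypotheses on $\rho_n$, produces the a priori moment bound $\sup_n \mathbb{E}[\abs{Y_n}^2] \leqslant C'\bigl(1+\mathbb{E}[\abs{X(0)}^2]\bigr)$.

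Next I would expand $\abs{e_{n+1}}^2 = \abs{\delta_n}^2 + 2\delta_n \cdot \rho_n + \abs{\rho_n}^2$, condition on $\mathcal{F}_{t_n}$, and bound the three terms separately. The first and third contribute $(1+Ch)\abs{e_n}^2$ and $M^2(1+\abs{Y_n}^2)h^{2q_2}$ respectively. For the cross term I would split $\delta_n = e_n + (\delta_n - e_n)$: the $\mathcal{F}_{t_n}$-measurable piece $2 e_n \cdot \mathbb{E}[\rho_n\mid\mathcal{F}_{t_n}]$ is controlled by the conditional-mean hypothesis and Young's inequality as $h\abs{e_n}^2 + M^2(1+\abs{Y_n}^2)h^{2q_1-1}$, while the correlated remainder is controlled by Cauchy--Schwarz, $\mathbb{E}[\abs{\delta_n - e_n}^2\mid\mathcal{F}_{t_n}]^{1/2} \leqslant C^{1/2}h^{1/2}\abs{e_n}$, and a second application of Young's, giving $h\abs{e_n}^2 + C M^2(1+\abs{Y_n}^2)h^{2q_2}$. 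Since $q_1 \geqslant q_2 + 1/2$ gives $2q_1 - 1 \geqslant 2q_2$, every remainder is at worst $O((1+\abs{Y_n}^2)h^{2q_2})$. Taking expectations and inserting the a priori moment bound yields the recursion
\[\mathbb{E}[\abs{e_{n+1}}^2] \leqslant (1+C_1 h)\,\mathbb{E}[\abs{e_n}^2] + C_2\bigl(1+\mathbb{E}[\abs{X(0)}^2]\bigr) h^{2q_2},\]
and a discrete Gronwall iteration with $Nh = T$ and $e_0 = 0$ produces $\mathbb{E}[\abs{e_N}^2] \leqslant \tilde C(1+\mathbb{E}[\abs{X(0)}^2])h^{2q_2 - 1}$; taking square roots delivers \cref{MSBound}.

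The main obstacle is the cross term $\mathbb{E}[\delta_n \cdot \rho_n \mid \mathcal{F}_{t_n}]$. Because $\delta_n$ and $\rho_n$ both depend on the Brownian increments over $[t_n,t_{n+1}]$, they are correlated and the expectation does not factor into a product of means. The remedy is to peel off the $\mathcal{F}_{t_n}$-measurable part $e_n$ of $\delta_n$, which exposes the conditional-mean hypothesis on $\rho_n$ at rate $h^{q_1}$, while bounding the diffusion-type remainder $\delta_n - e_n$ through its $h^{1/2}\abs{e_n}$ mean-square growth. This is precisely where the asymmetric requirement $q_1 \geqslant q_2 + 1/2$ is essential; without it the $h^{2q_1 - 1}$ piece of the cross term would exceed $h^{2q_2}$ and the global rate would degrade.
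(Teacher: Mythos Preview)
Your proposal is essentially correct and follows Milstein's classical argument faithfully: the decomposition $e_{n+1}=\delta_n+\rho_n$, the handling of the cross term by splitting off the $\mathcal{F}_{t_n}$-measurable part $e_n$ of $\delta_n$, and the discrete Gronwall closure are exactly the ingredients of the standard proof, and your identification of the cross term as the place where the hypothesis $q_1\geqslant q_2+1/2$ enters is right.

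However, the paper does not prove this theorem at all. It is stated as a quoted result from Milstein \cite{Milstein1995} (see also Milstein and Tretyakov \cite{MilsteinTretyakov2021}), used as a black box to establish the convergence orders of the MEM and MM schemes via \cref{MEMConditions,MMConditions}. So there is no ``paper's own proof'' to compare against; what you have written is a reasonable sketch of the proof one would find in Milstein's monograph, and it would be appropriate as a self-contained justification if the paper had chosen to include one.
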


To construct Magnus–Taylor schemes for SDDEs, we combine Magnus methods for the linear homogeneous part with stochastic Taylor schemes for the remaining nonlinear terms, applying them piecewise on Bellman intervals between delay multiples.

\subsection{Combined schemes}\label{Section2.2}

Suppose $(t_n)_{n=-p,\ldots,N}$ is a time mesh that includes times $t_{p_k}=\tau_k$, for every $k=1,\ldots,K$, where $t_{p}=\tau$. In order to combine the Taylor and Magnus schemes for the SDDE \cref{Equation1}, we solve the equation between the multiples of the delay times. That is, we list the delay multiples $\tau_1,2\tau_1,\ldots,N_1\tau_1$, $\tau_2,2\tau_2,\ldots,N_2\tau_2$, $\ldots,$ $\tau_K,2\tau_K,\ldots,N_K\tau_K$, where
\[N_k=\inf\{n\in\mathbb{N}:n\tau_k\geqslant T\},\quad k=1,\ldots,K,\]
sort these $N_{\mathrm{total}}=\sum_{k=1}^KN_k$ numbers into ascending order, label these ordered delay multiples $\sigma_1,\allowbreak\sigma_2,\allowbreak\ldots,\allowbreak\sigma_{N_{\mathrm{total}}}$, and then solve \cref{Equation1} on the \emph{Bellman intervals} $[0,\sigma_1],\allowbreak[\sigma_1,\sigma_2],\allowbreak\ldots,\allowbreak[\sigma_{{N_{\mathrm{total}}}-1},\sigma_{N_{\mathrm{total}}}]$. Without loss of generality, we assume $T=\sigma_{N_{\mathrm{total}}}$ (or else we may assume an additional delay $\tau_{K+1}=T$ is included, and then remove any delay multiples above $T$).

We give further details of this derivation in the next section, but to define the Magnus--Taylor schemes for \cref{Equation1}, we apply Magnus approximations to the linear homogeneous part
\begin{equation}\mathrm{d}X_H(t)=A_0X_H(t)\,\mathrm{d}t+\sum_{j=1}^mA_jX_H(t)\,\mathrm{d}W_j(t),\label{LHEquation}\end{equation}
and we apply stochastic Taylor approximations to the inhomogeneous equation
\begin{align}\mathrm{d}X_R(t)&=f(t,X_R(t),X_R(t-\tau_1),\ldots,X_R(t-\tau_K))\,\mathrm{d}t\label{IHEquation}\\
&\quad+\sum_{j=1}^mg_j(t,X_R(t),X_R(t-\tau_1),\ldots,X_R(t-\tau_K))\,\mathrm{d}W_j(t).\nonumber\end{align}
The MEM scheme is produced by numerically solving \cref{LHEquation} with the first-order Magnus approximation \cref{MEMYnplus1} while the SDDE EM scheme \cref{EM} is applied to \cref{IHEquation}. Similarly, the MM scheme approximates \cref{LHEquation} with the second-order Magnus scheme \cref{MMYnplus1} while applying the SDDE Milstein scheme \cref{Milstein} to \cref{IHEquation}. These schemes are applied iteratively on each Bellman interval, to produce the following numerical schemes.


\begin{definition}\label{MEMandMMSDDESchemes}
Suppose $X=(X(t))_{t\in[-\tau,T]}$ is the unique strong solution of \cref{Equation1}, being approximated by the numerical scheme $Y=(Y_n)_{n=-p,\ldots,N}$, with $Y_n=Y(t_n)=\phi(t_n)$ when $t_n\leqslant0$. For each $n=0,\ldots,N-1$, define $t_n^{\tau_k}=t_n-\tau_k$, $Y_n^{\tau_k}=Y(t_n-\tau_k)$, and $Y_n^{\tau_l,\tau_k}=Y(t_n-\tau_l-\tau_k)$, for $k,l=1,\ldots,K$, along with step size $h_n=t_{n+1}-t_n$. Define the function
\[\tilde{f}(t,x,x_{\tau_1},\ldots,x_{\tau_K})=f(t,x,x_{\tau_1},\ldots,x_{\tau_K})-\sum_{j=1}^mA_jg_j(t,x,x_{\tau_1},\ldots,x_{\tau_K}),\]
for $(t,x,x_{\tau_1},\ldots,x_{\tau_K})\in[0,T]\times\mathbb{R}^d\times(\mathbb{R}^d)^K$. We call $Y$ the \emph{Magnus--Euler--Maruyama (MEM)} numerical approximation of \cref{Equation1} if it is given by first calculating the one-step, first-order Magnus approximation
\[M_n^{[1]}(t_n,t_{n+1})=\exp\bigg(\bigg(A_0-\frac{1}{2}\sum_{i=1}^mA_i^2\bigg)\,h_n+\sum_{j=1}^mA_j\,\Delta W_j(t_n,t_{n+1})\bigg),\]
and this is then used to evaluate
\begin{align}Y_{n+1}=M_n^{[1]}(t_n,t_{n+1})\cdot\bigg\{&Y_n+\tilde{f}(t_n,Y_n,Y_n^{\tau_1},\ldots,Y_n^{\tau_K})\,h_n\label{SDDEMEM}\\
&+\sum_{j=1}^mg_j(t_n,Y_n,Y_n^{\tau_1},\ldots,Y_n^{\tau_K})\,\Delta W_j(t_n,t_{n+1})\bigg\},\nonumber\end{align}
for $n=0,\ldots,N-1$. Similarly, $Y$ is the \emph{Magnus--Milstein (MM)} numerical approximation if it is given by finding the one-step, second-order Magnus approximation
\begin{align*}
M_n^{[2]}(t_n,t_{n+1})=\exp\bigg(&\bigg(A_0-\frac{1}{2}\sum_{i=1}^mA_i^2\bigg)\,h_n+\sum_{j=1}^mA_j\,\Delta W_j(t_n,t_{n+1})\\
&+\frac{1}{2}\sum_{i=0}^m\sum_{j=i+1}^m[A_i,A_j](I_{ji}(t_n,t_{n+1})-I_{ij}(t_n,t_{n+1}))\bigg),
\end{align*}
finding the one-step, second-order Taylor approximation
\begin{align}
\tilde{Y}_n=\bigg\{Y_n&+\tilde{f}(t_n,Y_n,Y_n^{\tau_1},\ldots,Y_n^{\tau_K})\,h_n+\sum_{j=1}^mg_j(t_n,Y_n,Y_n^{\tau_1},\ldots,Y_n^{\tau_K})\,\Delta W_j(t_n,t_{n+1})\nonumber\\
&+\sum_{i=1}^m\sum_{j=1}^m\left(\vphantom{\sum}\right.\nabla_xg_j(t_n,Y_n,Y_n^{\tau_1},\ldots,Y_n^{\tau_K})\cdot\left[A_iY_n+g_i(t_n,Y_n,Y_n^{\tau_1},\ldots,Y_n^{\tau_K})\right]\nonumber\\
&\phantom{+\sum_{i=1}^m\sum_{j=1}^m\quad}\hspace{2.5cm}-A_ig_j(t_n,Y_n,Y_n^{\tau_1},\ldots,Y_n^{\tau_K})\left.\vphantom{\sum}\right)\,I_{ij}(t_n,t_{n+1})\nonumber\\
&+\sum_{k=1}^K\mathbb{I}(t_n\geqslant\tau_k)\sum_{i=1}^m\sum_{j=1}^m\bigg(\nabla_{x_{\tau_k}}g_j(t_n,Y_n,Y_n^{\tau_1},\ldots,Y_n^{\tau_K})\nonumber\\
&\phantom{+\sum_{k=1}^K\mathbb{I}(t_n\geqslant\tau_k)}\!\!\cdot\left[A_iY_n^{\tau_k}+g_i(t_n^{\tau_k},Y_n^{\tau_k},Y_n^{\tau_1,\tau_k},\ldots,Y_n^{\tau_K,\tau_k})\right]\bigg)\,I_{ij}^{\tau_k}(t_n,t_{n+1})\bigg\},\nonumber
\end{align}
and then using this to evaluate
\begin{equation}
Y_{n+1}=M_n^{[2]}(t_n,t_{n+1})\cdot\tilde{Y}_n,\label{SDDEMM}
\end{equation}
for each $n=0,\ldots,N-1$.
\end{definition}

\section{Derivation of schemes}\label{Section3}

Here we derive the MEM and MM schemes, primarily focusing on the MM scheme as the MEM scheme is simpler to express. We give this derivation directly, applying numerous inequalities to bound the schemes. We then retain only the terms whose mean-square expectations are multiples of the step size $h$ or $h^{1/2}$, while omitting all higher-order terms. In doing this, we aim to present an intuitive derivation of the MEM and MM schemes. An alternative, concise derivation for the case of no delays is available in Yang, Burrage, Komori, Burrage, and Ding \cite{BurrageBurrage2021}, who use the It\^o chain rule (also known as integration by parts for It\^o SDEs).

We consider equation \cref{Equation1} on each of the Bellman intervals $[\sigma_0,\sigma_1],[\sigma_1,\sigma_2],\ldots,\allowbreak[\sigma_{{N_{\mathrm{total}}}-1},\sigma_{N_{\mathrm{total}}}]$, where $\sigma_0=0$. On each fixed interval $[\sigma_b,\sigma_{b+1}]$ (for some $b=0,\ldots,{N_{\mathrm{total}}}-1$), we apply the decomposition by Yang et al.\ \cite{BurrageBurrage2021}, writing the solution $X$ as the product $X(t)=X_\mathrm{H}(t)X_\mathrm{R}(t)$, for solution $X_\mathrm{H}$ of the homogeneous $d\times d$ matrix SDE
\begin{align}
\mathrm{d}X_\mathrm{H}(t)&=A_0X_\mathrm{H}(t)\,\mathrm{d}t+\sum_{j=1}^mA_jX_\mathrm{H}(t)\,\mathrm{d}W_j(t),\quad t\in[\sigma_b,\sigma_{b+1}],\label{dXHd}
\end{align}
where $X_\mathrm{H}(\sigma_b)=I_d$. \Cref{dXHd} is \cref{dXH} applied on the present Bellman interval rather than on $[0,T]$. In the interval $[\sigma_b,\sigma_{b+1}]$, the past solution has been determined from \cref{Equation1} on prior Bellman intervals. Letting $\phi_{\sigma_b}:[0,\sigma_b]\to(\mathbb{R}^d)^K$ denote the collection of the $K$ delayed solutions as a function of $s$, given by $\phi_{\sigma_b}(s)=(X(s-\tau_1),\ldots,X(s-\tau_K))$, it follows from \cref{XR} that the remaining process $X_\mathrm{R}$ is given by
\begin{align}
X_\mathrm{R}(t)&=X(\sigma_b)+\int_{\sigma_b}^tX_\mathrm{H}^{-1}(s)\tilde{f}(s,X(s),\phi_{\sigma_b}(s))\,\mathrm{d}s\label{XRd}\\
&\quad+\sum_{j=1}^m\int_{\sigma_b}^tX_\mathrm{H}^{-1}(s)g_j(s,X(s),\phi_{\sigma_b}(s))\,\mathrm{d}W_j(s),\quad t\in[\sigma_b,\sigma_{b+1}].\nonumber
\end{align}

\subsection{Magnus approximation for homogeneous part}

Burrage and Burrage \cite{BurrageBurrage1999} derive Magnus expansions for Stratonovich SDEs, while Kamm, Pagliarani, and Pascucci \cite{Kamm2021} extend this to the It\^o setting. We use these expansions to express $X_H(t)=\exp(\Omega(\sigma_b,t))$, where
\begin{align}
\Omega(\sigma_b,t)&=\bigg(A_0-\frac{1}{2}\sum_{j=1}^mA_j^2\bigg)\,(t-\sigma_b)+\sum_{j=1}^mA_j\,\Delta W_j(\sigma_b,t)\label{Omega}\\
&\quad+\frac{1}{2}\sum_{i=0}^m\sum_{j=i+1}^m[A_i,A_j](I_{ji}(\sigma_b,t)-I_{ij}(\sigma_b,t))+\cdots.\nonumber
\end{align}
In defining the MEM scheme, \cref{Omega} is truncated and approximated by
\begin{equation}
\Omega^{[1]}(\sigma_b,t)=\bigg(A_0-\frac{1}{2}\sum_{j=1}^mA_j^2\bigg)\,(t-\sigma_b)+\sum_{j=1}^mA_j\,\Delta W_j(\sigma_b,t),\label{Omega1}
\end{equation}
while in the MM scheme, \cref{Omega} is approximated by
\begin{align}
\Omega^{[2]}(\sigma_b,t)&=\bigg(A_0-\frac{1}{2}\sum_{j=1}^mA_j^2\bigg)\,(t-\sigma_b)+\sum_{j=1}^mA_j\,\Delta W_j(\sigma_b,t)\label{Omega2}\\
&\quad+\frac{1}{2}\sum_{i=0}^m\sum_{j=i+1}^m[A_i,A_j](I_{ji}(\sigma_b,t)-I_{ij}(\sigma_b,t)).\nonumber
\end{align}
All unseen terms of \cref{Omega} are multiples of triple and higher-order integrals. The general $q^\mathrm{th}$-order terms of \cref{Omega} may be found by applying the iterative derivation of $\Omega^{[q]}\approx\Omega$ by Kamm et al.\ \cite{Kamm2021}. We also note that $X_H^{-1}(t)=\exp(-\Omega(\sigma_b,t))$, since this satisfies $X_H^{-1}(t)X_H(t)=I_d=X_H(t)X_H^{-1}(t)$.


\subsection{Taylor approximation for inhomogeneous part}

We make use of the Taylor theorem for the SDDE \cref{Equation1}, so we now give this (\cref{ItoTaylorKuchlerPlaten}, below). This theorem is given by Kloeden and Platen \cite{KloedenPlaten1999} in the case of no delay, but extended to the single-delay case by K\"uchler and Platen \cite{KuchlerPlaten2000}. We give proof of the extension to $K$ delays, in the supplementary material \cref{ProofofItoTaylorKuchlerPlaten}. We also require expression for the differential $\mathrm{d}X_H^{-1}$, as well as several inequalities, in order to derive the Magnus schemes, so we give these results in \cref{AppendixProofsforConvergenceAnalysis}.

\begin{theorem}\label{ItoTaylorKuchlerPlaten}
Let $(X(t))_{t\in[-\tau,T]}$ be the strong solution of \cref{Equation5} and let $\sigma_b\leqslant t_n\leqslant t_{n+1}\leqslant \sigma_{b+1}$ be times in the Bellman interval $[\sigma_b,\sigma_{b+1}]$, for given $b=0,\ldots,{N_{\mathrm{total}}}-1$, and suppose $h_n=t_{n+1}-t_n<\tau_\mathrm{min}$. It follows that
\begin{align}
X(t_{n+1})&=X(t_n)+f(t_n,X(t_n),X(t_n^{\tau_1}),\ldots,X(t_n^{\tau_K}))\,h_n\nonumber\\
&\quad+\sum_{j=1}^mg_j(t_n,X(t_n),X(t_n^{\tau_1}),\ldots,X(t_n^{\tau_K}))\,\Delta W_j(t_n,t_{n+1})\label{DelayTaylorExpansion}\\
&\quad+\sum_{i=1}^m\sum_{j=1}^m\bigg(\nabla_xg_j(t_n,X(t_n),X(t_n^{\tau_1}),\ldots,X(t_n^{\tau_K}))\nonumber\\
&\quad\quad\hspace{1cm}\cdot g_i(t_n,X(t_n),X(t_n^{\tau_1}),\ldots,X(t_n^{\tau_K}))\bigg)\,I_{ij}(t_n,t_{n+1})\nonumber\\
&\quad+\sum_{k=1}^K\mathbb{I}(t_n\geqslant\tau_k)\sum_{i=1}^m\sum_{j=1}^m\bigg(\nabla_{x_{\tau_k}}g_j(t_n,X(t_n),X(t_n^{\tau_1}),\ldots,X(t_n^{\tau_K}))\nonumber\\
&\quad\quad\hspace{1cm}\cdot g_i(t_n^{\tau_k},X(t_n^{\tau_k}),X(t_n^{\tau_1,\tau_k}),\ldots,X(t_n^{\tau_K,\tau_k}))\bigg)\,I_{ij}^{\tau_k}(t_n,t_{n+1})+R,\nonumber
\end{align}
where $t_n^{\tau_k}=t_n-\tau_k$ and $t_n^{\tau_l,\tau_k}=t_n-\tau_l-\tau_k$ for $k,l=1,\ldots,m$, while $R$ contains only terms that are multiples of integrals which have three terms or more, and satisfy the bound $\mathbb{E}[\abs{R}^2]<ch_n^{3/2}$ for constant $c>0$.
\end{theorem}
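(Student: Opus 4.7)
The plan is to derive the expansion by iterated application of the It\^o formula to the drift and diffusion coefficients $f$ and $g_j$, adapting the single-delay argument of K\"uchler and Platen to the $K$-delay setting. First I would write the solution on $[t_n, t_{n+1}]$ in integral form starting from \cref{Equation5}, and then apply the multivariate It\^o formula to each integrand $f(s, X(s), X(s - \tau_1), \ldots, X(s - \tau_K))$ and $g_j(s, X(s), X(s - \tau_1), \ldots, X(s - \tau_K))$ to expand it around the frozen point $(t_n, X(t_n), X(t_n^{\tau_1}), \ldots, X(t_n^{\tau_K}))$. Because $h_n < \tau_{\min}$, the shifted time $s - \tau_k$ remains in $[0,T]$ throughout $[t_n,t_{n+1}]$ when $t_n \geqslant \tau_k$, and otherwise $X(s-\tau_k)$ coincides with the history $\phi(s-\tau_k)$ except possibly on a small straddling subinterval of length at most $h_n$. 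This dichotomy is precisely what produces the indicator $\mathbb{I}(t_n \geqslant \tau_k)$ in \cref{DelayTaylorExpansion}.

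Next I would collect the terms by their mean-square order. The zeroth-order pieces are the frozen values $f(t_n, \ldots)\,h_n$ and $g_j(t_n, \ldots)\,\Delta W_j(t_n, t_{n+1})$. The first It\^o correction to $\int g_j\,\mathrm{d}W_j$ expands $g_j$ along $\mathrm{d}X(s)$ and along each $\mathrm{d}X(s - \tau_k)$; using $\mathrm{d}X(s) = f\,\mathrm{d}s + \sum_i g_i\,\mathrm{d}W_i(s)$ for the undelayed component and keeping only the $\mathrm{d}W_i\,\mathrm{d}W_j$ pieces yields $\nabla_x g_j \cdot g_i\,I_{ij}(t_n, t_{n+1})$, while the analogous representation for $X(s-\tau_k)$ (when $t_n \geqslant \tau_k$) brings in Wiener increments $\mathrm{d}W_i(u - \tau_k)$ and hence the delayed iterated integrals $\nabla_{x_{\tau_k}} g_j \cdot g_i(t_n^{\tau_k}, \ldots)\,I_{ij}^{\tau_k}(t_n, t_{n+1})$. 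All remaining first-order corrections, namely those arising from $\partial_t$ derivatives, from $\nabla_x f \cdot \mathrm{d}X$, and from the second-order It\^o corrections involving $\nabla_x^2 g_j$ and $\nabla_{x_{\tau_k}}^2 g_j$, carry either an extra factor of $h_n$ or an extra $\mathrm{d}s$-integration and are therefore consigned to the remainder $R$.

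Finally I would bound $R$ by grouping its pieces into triple (or higher) iterated integrals of the form $\int \int \int \psi\,\mathrm{d}\xi_1\,\mathrm{d}\xi_2\,\mathrm{d}\xi_3$, where each $\xi_l$ is either a time or a Wiener increment and $\psi$ is a smooth expression in $f$, $g_j$, their derivatives, and $X$. Combining the $C^1$-in-time and $C^2$-in-space regularity of the coefficients with the linear-growth bound \cref{LinearGrowthBound} and the second-moment estimate $\sup_{t \in [-\tau, T]} \mathbb{E}[\abs{X(t)}^2] < \infty$ from \cref{PLS}, each such $\psi$ has bounded second moment on the interval; repeated application of the It\^o isometry \cref{ItoIsometry} and the Fubini--Tonelli identity \cref{FubiniTonelliResult} then delivers $\mathbb{E}[\abs{R}^2] \leqslant c\,h_n^3$, which is strictly stronger than the stated $c\,h_n^{3/2}$ bound. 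The main obstacle is the combinatorial bookkeeping across the $K$ delays and the $m$ Wiener processes, together with controlling the possible straddling subintervals in which $s - \tau_k$ crosses zero; since any such subinterval has length at most $h_n$, its contribution is itself $O(h_n^{3/2})$ in mean square and can be absorbed into $R$ using the $L^2$ assumption on $\phi$.
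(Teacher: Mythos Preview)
Your proposal is essentially correct and follows the same core mechanism as the paper---iterated It\^o expansion of the diffusion integrands, separating the contributions from $X(s)$ and from each delayed argument $X(s-\tau_k)$---but the framing differs. The paper, following K\"uchler and Platen, first assembles the \emph{coupled system} of SDEs for the shifted processes $X(\cdot - q\tau_k)$, $q=0,\ldots,\alpha_k$, on the current Bellman interval, and then applies the standard (non-delay) It\^o--Taylor expansion to that enlarged system; the delayed iterated integrals $I_{ij}^{\tau_k}$ arise as ordinary cross-integrals between components driven by time-shifted Wiener processes. Your route applies It\^o's formula directly to $g_j(s,X(s),X(s-\tau_1),\ldots,X(s-\tau_K))$, invoking the semimartingale representation of each delayed argument in place. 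Both reach the same terms; the system viewpoint makes the independence of the delayed Wiener increments (needed for the remainder bound) more transparent, while your direct approach avoids the bookkeeping of building the enlarged state.

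One simplification you can make: your ``straddling subinterval'' discussion is unnecessary. Because each $\tau_k$ is itself a Bellman endpoint, the hypothesis $[t_n,t_{n+1}]\subset[\sigma_b,\sigma_{b+1}]$ forces either $t_n\geqslant\tau_k$ or $t_{n+1}\leqslant\tau_k$; the interval $[t_n,t_{n+1}]$ never straddles $\tau_k$, so the indicator $\mathbb{I}(t_n\geqslant\tau_k)$ is already exact, not an approximation up to $O(h_n)$. Also, a minor point on the remainder: bounding the second moment of the triple-integral integrands $\psi$ will in general require fourth moments of $X$ (for products like $(\nabla_x^2 g_j)[g_i,g_l]$), not just the second-moment bound from \cref{PLS}; the paper's standing assumption $\sup_{t\in[-\tau,0]}\mathbb{E}[\abs{\phi(t)}^4]<\infty$ supplies this, and you should cite it rather than \cref{PLS} alone.
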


Returning to the expansion $X(t)=X_H(t)X_R(t)$, for $t\in[\sigma_b,\sigma_{b+1}]$, $X_R(t)$ is simulated using \cref{XRd}, by applying a quadrature approximation to the Lebesgue integral while also using stochastic Taylor approximations for the It\^o integrals. In particular, we use approximations for the integral sum
\begin{equation}
I(\sigma_b,t)=\sum_{j=1}^m\int_{\sigma_b}^tX_H^{-1}(s)g_j(s,X(s),\phi_{\sigma_b}(s))\,\mathrm{d}W_j(s).\label{I}
\end{equation}
Letting $t_\alpha$ and $t_\beta$ be mesh points where $\sigma_b=t_\alpha\leqslant t_n<t_{n+1}\leqslant t_\beta=\sigma_{b+1}$, we suppose we are simulating $X$ at the time point $t=t_{n+1}$. The Euler--Maruyama approximation of \cref{I} is
\begin{align*}
I(\sigma_b,t_{n+1})&=\sum_{j=1}^m\sum_{l=\alpha}^n\int_{t_l}^{t_{l+1}}\exp(-\Omega(t_l,s))g_j(s,X(s),\phi_{\sigma_b}(s))\,\mathrm{d}W_j(s)\\
&\approx\sum_{j=1}^m\sum_{l=\alpha}^ng_j(t_l,X(t_l),\phi_{\sigma_b}(t_l))\,\Delta W_j(t_l,t_{l+1}),
\end{align*}
which is used to produce the MEM scheme. In order to produce the MM scheme, we use the Milstein approximation of \cref{I}. Let $\underline{A}_0=\left(-A_0+\sum_{j=1}^mA_j^2\right)$ and $\underline{A}_j=-A_j$, for each $j=1,\ldots,m$, to simplify notation. We calculate
\begin{align}
I(\sigma_b,t_{n+1})&=\sum_{j=1}^m\sum_{l=\alpha}^n\int_{t_l}^{t_{l+1}}\exp(-\Omega(t_l,s))g_j(s,X(s),\phi_{\sigma_b}(s))\,\mathrm{d}W_j(s)\nonumber\\
&=\sum_{j=1}^m\sum_{l=\alpha}^n\int_{t_l}^{t_{l+1}}\bigg(I_d+\int_{t_l}^s\underline{A}_0Z_{t_l}(u)\,\mathrm{d}u+\sum_{i=1}^m\int_{t_l}^s\underline{A}_iZ_{t_l}(u)\,\mathrm{d}W_i(u)\bigg)\label{Isum}\\
&\quad\cdot g_j(s,X(s),\phi_{\sigma_b}(s))\,\mathrm{d}W_j(s)\nonumber\\
&=\sum_{j=1}^m\sum_{l=\alpha}^n\bigg\{\int_{t_l}^{t_{l+1}}g_j(s,X(s),\phi_{\sigma_b}(s))\,\mathrm{d}W_j(s)\nonumber\\
&\quad+\int_{t_l}^{t_{l+1}}\bigg(\int_{t_l}^s\underline{A}_0Z_{t_l}(u)\,\mathrm{d}u\bigg)\cdot g_j(s,X(s),\phi_{\sigma_b}(s))\,\mathrm{d}W_j(s)\nonumber\\
&\quad+\int_{t_l}^{t_{l+1}}\bigg(\sum_{i=1}^m\int_{t_l}^s-A_iZ_{t_l}(u)\,\mathrm{d}W_i(u)\bigg)\cdot g_j(s,X(s),\phi_{\sigma_b}(s))\,\mathrm{d}W_j(s)\bigg\}\nonumber\\
&=\sum_{l=\alpha}^n\left(I^{(1)}(t_l,t_{l+1})+I^{(2)}(t_l,t_{l+1})+I^{(3)}(t_l,t_{l+1})\right),\nonumber
\end{align}
for a matrix-valued geometric Brownain motion $Z_{t_l}$ (see \cref{LemmaPhiInverse}), and integral terms $I^{(1)}(t_l,t_{l+1})$, $I^{(2)}(t_l,t_{l+1})$, and $I^{(3)}(t_l,t_{l+1})$. We further calculate these integrals (see \cref{NewLemma2}, \cref{I1,I2,I3}) to find
\begin{align}
I(\sigma_b,t_{n+1})&=\sum_{l=\alpha}^n\bigg(\sum_{j=1}^mg_j(t_l,X(t_l),\phi_{\sigma_b}(t_l))\,\Delta W_j\nonumber\\
&\quad\quad+\sum_{i=1}^m\sum_{j=1}^m\bigg(\nabla_xg_j(t_l,X(t_l),\phi_{\sigma_b}(t_l))\nonumber\\
&\quad\quad\quad\hspace{1cm}\cdot[A_iX(t_l)+g_i(t_l,X(t_l),\phi_{\sigma_b}(t_l))]\bigg)\,I_{ij}(t_l,t_{l+1})\nonumber\\
&\quad\quad+\sum_{k=1}^K\mathbb{I}(t_l\geqslant\tau_k)\sum_{i=1}^m\sum_{j=1}^m\bigg(\nabla_{x_{\tau_k}}g_j(t_l,X(t_l),\phi_{\sigma_b}(t_l))\nonumber\\
&\quad\quad\quad\hspace{1cm}\cdot[A_iX(t_l^{\tau_k})+g_i(t_l^{\tau_l},X(t_l^{\tau_l}),\phi_{\sigma_b}(t_l^{\tau_l}))]\bigg)\,I_{ij}^{\tau_k}(t_l,t_{l+1})\nonumber\\
&\quad\quad-\sum_{j=1}^m\sum_{i=1}^mA_iZ(t_l)g_j(t_l,X(t_l),\phi_{\sigma_b}(t_l))\,I_{ij}(t_l,t_{l+1})+R\bigg),\nonumber
\end{align}
where $\mathbb{E}[\abs{R}^2]\leqslant c(t_{l+1}-t_l)^{3/2}$, for some constant $c>0$.

\subsection{Magnus--Taylor expansion}

From the above, the Magnus--Taylor expansion for the solution of \cref{Equation1} is
\begin{align}
X(t)&=\exp(\Omega(\sigma_b,t))\bigg\{R+X(\sigma_b)+\sum_{l=\alpha}^n\int_{t_l}^{t_{l+1}}X_\mathrm{H}^{-1}(s)\tilde{f}(s,X(s),\phi_{\sigma_b}(s))\,\mathrm{d}s\nonumber\\
&\quad+\sum_{l=\alpha}^n\sum_{j=1}^mg_j(t_l,X(t_l),\phi_{\sigma_b}(t_l))\,\Delta W_j\label{MTExpansion}\\
&\quad+\sum_{l=\alpha}^n\sum_{i=1}^m\sum_{j=1}^m\left(\vphantom{\sum}\right.\nabla_xg_j(t_l,X(t_l),\phi_{\sigma_b}(t_l))\cdot[A_iX(t_l)+g_i(t_l,X(t_l),\phi_{\sigma_b}(t_l))]\nonumber\\
&\quad\quad\hspace{3.75cm}-A_iZ(t_l)g_j(t_l,X(t_l),\phi_{\sigma_b}(t_l))\left.\vphantom{\sum}\right)\,I_{ij}(t_l,t_{l+1})\nonumber\\
&\quad+\sum_{l=\alpha}^n\sum_{k=1}^K\mathbb{I}(t_l\geqslant\tau_k)\sum_{i=1}^m\sum_{j=1}^m\bigg(\nabla_{x_{\tau_k}}g_j(t_l,X(t_l),\phi_{\sigma_b}(t_l))\nonumber\\
&\quad\quad\hspace{2.75cm}\cdot[A_iX(t_l^{\tau_k})+g_i(t_l^{\tau_l},X(t_l^{\tau_l}),\phi_{\sigma_b}(t_l^{\tau_l}))]\bigg)\,I_{ij}^{\tau_k}(t_l,t_{l+1})\bigg\},\nonumber
\end{align}
where $R$ consists of matrix multiples (formed from commutators) of third-degree (and higher) iterated integrals and there is a constant $c>0$ such that $\mathbb{E}[\abs{R}^2]<c(t-\sigma_b)^{3/2}$, for all $t\in[\sigma_b,\sigma_{b+1}]$. From \cref{MTExpansion}, the MEM scheme \cref{SDDEMEM} is found by truncating all factors of second-degree (or higher) integrals, and retaining only the multiples of the first-degree integrals. Similarly, the MM scheme \cref{SDDEMM} is found by omitting all multiples of third-degree (or higher) integrals.

\subsection{Convergence analysis}

We now give a proof that the MEM scheme has MS OoC $1/2$, while the MM scheme has MS OoC $1$. These proofs apply \cref{GeneralTheoremofMilstein} on each Bellman interval $[\sigma_b,\sigma_{b+1}]$, for $b=0,\ldots,{N_{\mathrm{total}}}-1$. Using the notation of \cref{Section3}, we first rephrase \cref{GeneralTheoremofMilstein} in terms of the SDDE \cref{Equation1}. We also use the shorthand notation $y(t)=y(t,X(t),\phi_{\sigma_b}(t))$, for each of $y=f,\tilde{f},g_j$, $j=1,\ldots,m$.

\begin{theorem}\label{GeneralTheoremofMilsteinforSDDEs}
Let $Y=(Y_n)_{n=-p,\ldots,N}$ be a numerical scheme with fixed step size $h=t_{n+1}-t_n$, approximating the solution $X=(X(t))_{t\in[-\tau,T]}$ of
\cref{Equation1}, and suppose each delay $\tau_k=t_{p_k}$ for some mesh time $t_{p_k}$. If there are constants $M>0$, $q_2\geqslant 1/2$, and $q_1\geqslant q_2+1/2$, such that
\begin{align}
\abs{\mathbb{E}[Y_{n+1}|Y_n=x]-\mathbb{E}[X(t_{n+1})|X(t_n)=x]}&\leqslant M\left(1+\abs{x}^2\right)^{1/2}h^{q_1}\quad\textrm{and}\label{WeakBound}\\
\left(\mathbb{E}\left[\abs{Y_{n+1}-X(t_{n+1})}^2|Y_n=X(t_n)=x\right]\right)^{1/2}&\leqslant M\left(1+\abs{x}^2\right)^{1/2}h^{q_2},\label{MSPart}
\end{align}
for any $n=-p,\ldots,N-1$ and $x\in\mathbb{R}^d$, then
\begin{equation}\left(\mathbb{E}\left[\abs{Y_n-X(t_n)}^2|Y_0=X(0)\right]\right)^{1/2}\leqslant M\left(1+\mathbb{E}\left[\abs{X(0)}^2\right]\right)^{1/2}h^{q_2-1/2}\label{MSBoundSDDE}\end{equation}
holds for every $n=-p,\ldots,N$.
\end{theorem}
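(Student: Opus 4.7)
My plan is to reduce the SDDE case to Milstein's SDE theorem (\cref{GeneralTheoremofMilstein}) by working on one Bellman interval at a time. Since the delays are assumed to align with the mesh ($\tau_k = t_{p_k}$), at every mesh point $t_n$ inside a Bellman interval $[\sigma_b,\sigma_{b+1}]$ the delayed values $X(t_n - \tau_k) = X(t_{n-p_k})$ and $Y_{n-p_k}$ lie in earlier Bellman intervals and are already determined. On $[\sigma_b,\sigma_{b+1}]$ we may therefore view \cref{Equation1} as an SDE with time-dependent, adapted coefficients $\hat f(t,x) = A_0 x + f(t,x,\phi_{\sigma_b}(t))$ and $\hat g_j(t,x) = A_j x + g_j(t,x,\phi_{\sigma_b}(t))$, whose Lipschitz and linear-growth constants inherit from those of $f,g_j$ together with the operator norms of the $A_j$.

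First I would verify that the local weak and mean-square hypotheses \cref{WeakBound}--\cref{MSPart} provide exactly the per-step control that \cref{GeneralTheoremofMilstein} needs for the reduced SDE on $[\sigma_b,\sigma_{b+1}]$. Next I would induct on $b$. In the base case $b = 0$, the delayed values coincide with the history process $\phi$, and \cref{GeneralTheoremofMilstein} applied on $[0,\sigma_1]$ delivers \cref{MSBoundSDDE} directly for all $n$ with $t_n \leqslant \sigma_1$. For the inductive step, assume \cref{MSBoundSDDE} has been established at all mesh points up to $\sigma_b$. On the next interval I would introduce an auxiliary process $\tilde Y$ that runs the same Magnus--Taylor scheme on $[\sigma_b,\sigma_{b+1}]$ but is re-initialised at $\tilde Y_{p_b} = X(\sigma_b)$, so that \cref{GeneralTheoremofMilstein} applies in matched form and gives
\[
\bigl(\mathbb{E}\bigl[\abs{\tilde Y_n - X(t_n)}^2 \,\big|\, \mathcal{F}_{\sigma_b}\bigr]\bigr)^{1/2} \leqslant M\bigl(1+\abs{X(\sigma_b)}^2\bigr)^{1/2} h^{q_2 - 1/2}.
\]
A triangle inequality in $L^2$ then splits the actual error $Y_n - X(t_n)$ into $(Y_n - \tilde Y_n) + (\tilde Y_n - X(t_n))$; the second summand is controlled by the display above while the first summand is controlled by a discrete $L^2$-stability estimate that measures how Lipschitz continuously the scheme depends on its starting datum.

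The main obstacle will be this discrete $L^2$-stability estimate, and ensuring that the constants do not blow up as $h \to 0$. The Lipschitz and linear-growth bounds \cref{LinearGrowthBound}--\cref{LipschitzBound}, combined with boundedness of the Magnus exponentials $M_n^{[1]}, M_n^{[2]}$ in the relevant moments, should yield a one-step bound of the form $\mathbb{E}\bigl[\abs{Y_{n+1} - \tilde Y_{n+1}}^2 \mid \mathcal{F}_{t_n}\bigr]^{1/2} \leqslant (1 + C h)\, \mathbb{E}\bigl[\abs{Y_n - \tilde Y_n}^2 \mid \mathcal{F}_{t_n}\bigr]^{1/2}$; iterating over the $O(1/h)$ mesh points in $[\sigma_b,\sigma_{b+1}]$ produces a multiplicative factor bounded by $e^{C(\sigma_{b+1}-\sigma_b)}$, which is $h$-independent. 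Taking unconditional expectations via the tower property, using the inductive bound $\mathbb{E}\bigl[\abs{Y_{p_b} - X(\sigma_b)}^2\bigr]^{1/2} \leqslant M(1+\mathbb{E}[\abs{X(0)}^2])^{1/2} h^{q_2 - 1/2}$ to seed the stability estimate, and absorbing $e^{C(\sigma_{b+1}-\sigma_b)}$ together with the operator norms of the $A_j$ into the constant $M$ (updated at most $N_{\mathrm{total}}$ times, with $N_{\mathrm{total}}$ independent of $h$), I obtain \cref{MSBoundSDDE} at all mesh points in $[\sigma_b,\sigma_{b+1}]$, which closes the induction and finishes the proof.
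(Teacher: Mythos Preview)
The paper does not actually prove \cref{GeneralTheoremofMilsteinforSDDEs}; it introduces the theorem with the words ``we first rephrase \cref{GeneralTheoremofMilstein} in terms of the SDDE \cref{Equation1}'' and then moves straight to applying it. Your proposal is therefore more complete than the paper's treatment, and the overall strategy---freeze the delayed arguments on each Bellman interval, apply \cref{GeneralTheoremofMilstein} there, and induct on $b$---is exactly the mechanism the paper tacitly relies on.

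One point deserves to be made explicit, though. On $[\sigma_b,\sigma_{b+1}]$ the true solution sees delayed values $X(t_{n-p_k})$ while the scheme sees $Y_{n-p_k}$, so the ``reduced SDE'' coefficients for $X$ and for $Y$ are not the same functions $\hat f,\hat g_j$: they differ by the inductive error in the delayed slots. Your auxiliary process $\tilde Y$ must therefore be defined to use the \emph{true} delayed values $X(t_{n-p_k})$ (not just re-initialised at $X(\sigma_b)$), so that \cref{GeneralTheoremofMilstein} applies to the pair $(X,\tilde Y)$ with matching coefficients. The discrete stability estimate for $Y_n-\tilde Y_n$ then has to absorb perturbations both in the initial datum $Y_{p_b}-X(\sigma_b)$ and in the delayed arguments $Y_{n-p_k}-X(t_{n-p_k})$; the Lipschitz bound \cref{LipschitzBound} plus the inductive hypothesis control both, and the argument closes as you describe. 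You should also note that the factor $(1+|X(\sigma_b)|^2)^{1/2}$ coming out of \cref{GeneralTheoremofMilstein} on the $b$th interval is converted to $(1+\mathbb{E}[|X(0)|^2])^{1/2}$ via the uniform second-moment bound in \cref{PLS}, with the resulting constant absorbed into $M$.
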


For each of the MEM and MM schemes, in order to prove the bounds \cref{WeakBound} and \cref{MSPart}, we express the solution $X$ on $[\sigma_b,\sigma_{b+1}]$, using the decomposition $X(t)=X_H(t)X_R(t)$, with $X_H$ and $X_R$ satisfying \cref{dXHd} and \cref{XRd}, respectively. This involves numerous calculations, which we show in the supplementary material \cref{ProofofMEMConditions,ProofofMMConditions}, and summarise in \Cref{NewLemma2}.

\begin{lemma}\label{MEMConditions}
The weak bound \cref{WeakBound} holds for the MEM scheme, with $q_1=2$, and the MS bound \cref{MSPart} holds for the MEM scheme, with $q_2=1$.
\end{lemma}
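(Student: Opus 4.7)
The plan is to verify the two hypotheses \cref{WeakBound} and \cref{MSPart} of \cref{GeneralTheoremofMilsteinforSDDEs} for the MEM scheme with $q_1=2$ and $q_2=1$. I work on a fixed Bellman interval $[\sigma_b,\sigma_{b+1}]$ containing the step $[t_n,t_{n+1}]$ and condition on $Y_n=X(t_n)=x$; on this interval the delayed history $\phi_{\sigma_b}$ has been fixed by the solution on earlier intervals and is $\mathcal{F}_{\sigma_b}$-measurable, hence independent of the Brownian increments on $[t_n,t_{n+1}]$. Applying the Magnus--Taylor expansion \cref{MTExpansion} with $\alpha=n$ and comparing with \cref{SDDEMEM}, I decompose the local error as
\begin{equation*}
Y_{n+1}-X(t_{n+1})=\bigl(\exp(\Omega^{[1]})-\exp(\Omega)\bigr)\,B-\exp(\Omega)\,C,
\end{equation*}
where $B=x+\tilde{f}(t_n,\cdot)h_n+\sum_j g_j(t_n,\cdot)\,\Delta W_j(t_n,t_{n+1})$ is the MEM bracket and $C$ collects the current and delayed Milstein corrections, the deviation $\int_{t_n}^{t_{n+1}}\!(X_H^{-1}(s)\tilde{f}(s,\cdot)-\tilde{f}(t_n,\cdot))\,\mathrm{d}s$ between the left-endpoint quadrature and the true Lebesgue integral, and the remainder $R$ from \cref{ItoTaylorKuchlerPlaten}.

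For the mean-square bound, I estimate each factor separately. The Magnus truncation $\Omega-\Omega^{[1]}=\tfrac{1}{2}\sum_{i<j}[A_i,A_j](I_{ji}-I_{ij})+\cdots$ has $L^2$-norm $O(h)$, since the double It\^o integrals $I_{ij}$ with $i,j\geqslant 1$ satisfy $\mathbb{E}[\abs{I_{ij}}^2]=O(h^2)$ by the It\^o isometry \cref{ItoIsometry}, and the remaining terms are triple iterated integrals of order $O(h^{3/2})$. A Lipschitz estimate for the matrix exponential, combined with the fact that $\exp(\Omega)$ and $\exp(\Omega^{[1]})$ have moments of every order bounded uniformly in $h$ (the matrices $A_j$ are constant and the step is finite), gives $\|\exp(\Omega^{[1]})-\exp(\Omega)\|_{L^4}=O(h)$. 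The bracket $B$ is bounded in $L^4$ by $c(1+\abs{x}^2)^{1/2}$ via the linear-growth hypothesis \cref{LinearGrowthBound} applied to $\tilde{f}$ and $g_j$. Each summand in $C$ is $O(h)$ in $L^2$: the truncated current and delayed Milstein terms by \cref{ItoIsometry} applied to $I_{ij}$ and $I_{ij}^{\tau_k}$, the Lebesgue-integral deviation by Lipschitz continuity of $\tilde{f}$ together with $\|X_H^{-1}(s)-I_d\|_{L^2}=O((s-t_n)^{1/2})$, and $R$ by its stated bound $\mathbb{E}[\abs{R}^2]=O(h^{3/2})$. Cauchy--Schwarz then delivers \cref{MSPart} with $q_2=1$.

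For the weak bound I exploit the zero-mean structure of the surviving stochastic terms. The leading Magnus correction has $\mathbb{E}[\Omega-\Omega^{[1]}]=0$, because $\mathbb{E}[I_{ji}-I_{ij}]=0$ for all index pairs appearing in the commutator sum, and every Milstein contribution inside $C$ also has vanishing expectation. Expanding $\exp(\Omega^{[1]})-\exp(\Omega)$ into its matrix power series around $\Omega^{[1]}$ and pairing against $B$, the only surviving cross terms take the form $\mathbb{E}[(I_{ji}-I_{ij})\Delta W_k]$ or products of $\Omega^{[1]}$ with deterministic factors, both of which vanish at first order by independence of the Wiener processes and contribute only at $O(h^2)$. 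The Lebesgue-integral piece of $\mathbb{E}[\exp(\Omega)C]$ likewise enters at $O(h^2)$ once one uses $\mathbb{E}[X_H^{-1}(s)-I_d]=-\mathbb{E}[\Omega(t_n,s)]+O((s-t_n)^2)=O(s-t_n)$, and the remainder $R$ contributes at $O(h^{3/2})$, which combines with the $O(h^{1/2})$ mean-zero parts of $\exp(\Omega)$ to give $O(h^2)$.

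The main obstacle is the weak-error bookkeeping: a direct Cauchy--Schwarz bound based on the $L^2$ estimate above only yields $q_1\geqslant q_2=1$, so one must expand the matrix exponentials into power series and check term by term that every nonzero combination of Brownian moments and iterated It\^o integrals enters at order $h^2$ or higher. The delayed Milstein contributions are handled by exactly the same arguments as the non-delayed ones, since the delayed-history factor $\phi_{\sigma_b}$ is independent of the present Brownian increments; this independence is precisely the payoff of the Bellman-interval decomposition and is the reason the delays add no structural difficulty beyond the indicator $\mathbb{I}(t_n\geqslant\tau_k)$. The detailed enumeration parallels that of Yang, Burrage, Komori, Burrage, and Ding \cite{BurrageBurrage2021} with an extra layer for the delayed terms, and is suitable for relegation to the supplementary material.
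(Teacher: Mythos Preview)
Your proposal is correct and follows essentially the same strategy as the paper's own proof. The paper decomposes the local error as $D_1+D_2+D_3$, indexed by which functional ingredient ($x$, $\tilde f$, or $g_j$) is involved, and then inside $D_2$ and $D_3$ adds and subtracts an intermediate term to separate the Magnus-truncation error from the integrand-approximation error---precisely the two effects you isolate up front with your split $(\exp\Omega^{[1]}-\exp\Omega)\,B-\exp(\Omega)\,C$. Your version is thus a regrouping of the same terms; where the paper computes $\mathbb{E}_x[D_1]$ directly via \cref{NewLemma3} and handles the integrands through the componentwise It\^o expansion of \cref{NewLemma4}, you reach the same estimates by an $L^4$--$L^4$ H\"older pairing for the first piece and the zero-mean structure for the second.

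One soft spot worth tightening: your weak-error treatment of the remainder, ``$R$ contributes at $O(h^{3/2})$, which combines with the $O(h^{1/2})$ mean-zero parts of $\exp(\Omega)$ to give $O(h^2)$,'' only handles the cross term $\mathbb{E}[\Omega\cdot R]$ via Cauchy--Schwarz. You also need the constant-term contribution $\mathbb{E}[I_d\cdot R]=\mathbb{E}[R]$ to be $O(h^2)$ in its own right; this is true because the leading pieces of $R$ are triple iterated It\^o integrals and $I_{0j},I_{j0}$-type terms, all of zero mean, while the first nonvanishing deterministic contribution ($L_0f\cdot h^2/2$) is already $O(h^2)$. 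With that observation added, your bookkeeping closes.
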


\begin{lemma}\label{MMConditions}
The weak bound \cref{WeakBound} holds for the MM scheme, with $q_1=2$, and the MS bound \cref{MSPart} holds for the MM scheme, with $q_2=1.5$.
\end{lemma}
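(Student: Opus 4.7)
The plan is to verify the hypotheses of \cref{GeneralTheoremofMilsteinforSDDEs} with $q_1 = 2$ and $q_2 = 3/2$, so that the conclusion \cref{MSBoundSDDE} gives the claimed global MS order $q_2 - 1/2 = 1$. As for the MEM case, I would carry out the analysis on a fixed Bellman interval $[\sigma_b,\sigma_{b+1}]$, treating the delayed process $\phi_{\sigma_b}$ as a known adapted history from the induction over prior intervals. Then I would subtract the MM definition \cref{SDDEMM} from the Magnus--Taylor expansion \cref{MTExpansion} conditionally on $Y_n = X(t_n) = x$ and decompose the local error into three contributions: (i) the residual $R$ of \cref{MTExpansion}, consisting only of matrix multiples of iterated integrals of degree at least three; (ii) the discrepancy between the exact Magnus exponential $\exp(\Omega(t_n,t_{n+1}))$ and its second-order truncation $\exp(\Omega^{[2]}(t_n,t_{n+1}))$; and (iii) the left-endpoint quadrature error in approximating $\int_{t_n}^{t_{n+1}} X_H^{-1}(s)\tilde{f}(s,X(s),\phi_{\sigma_b}(s))\,\mathrm{d}s$.

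For the mean-square bound \cref{MSPart} I would estimate each piece in $L^2$. Repeated application of the It\^o isometry \cref{ItoIsometry} and the Fubini--Tonelli identity \cref{FubiniTonelliResult} from \cref{FubiniTonelliIto} show that any triple iterated stochastic integral over $[t_n,t_{n+1}]$ satisfies $\mathbb{E}[|I_{i_1 i_2 i_3}|^2] = O(h^3)$, so piece (i) contributes $O(h^{3/2})$ in $L^2$. For piece (ii) I would use the matrix-exponential estimate $\|\exp(A)-\exp(B)\| \leqslant \|A-B\|\exp(\max\{\|A\|,\|B\|\})$, combined with the fact that $\Omega - \Omega^{[2]}$ consists precisely of commutators of triple (and higher) integrals; Cauchy--Schwarz together with moment bounds on $X_H$, $X_H^{-1}$, and the linear-growth assumption \cref{LinearGrowthBound} on $\tilde f$ and the $g_j$ then converts this into a $c(1+|x|^2)^{1/2}h^{3/2}$ local bound. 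Piece (iii) is the standard rectangle-rule error for an It\^o process, again $O(h^{3/2})$ in $L^2$ since $\tilde f$ is $C^1$ in time and $C^2$ in space. Summing yields \cref{MSPart} with $q_2 = 3/2$.

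For the weak bound \cref{WeakBound} I would exploit cancellation: under the conditional expectation $\mathbb{E}[\,\cdot\mid Y_n = X(t_n) = x]$, every term proportional to a single Wiener increment and every off-diagonal $I_{ij}$ with $i\neq j$ vanishes, while the surviving $\mathbb{E}[I_{jj}] = h/2$ contributions combine with the drift terms to reproduce the conditional Taylor expansion of $\mathbb{E}[X(t_{n+1}) \mid X(t_n) = x]$ up to $O(h^2)$. The main obstacle I expect is piece (ii): one must argue that although $\Omega - \Omega^{[2]}$ contains the mean-zero combinations $I_{ji} - I_{ij}$ at degree two (already captured in $\Omega^{[2]}$) and only degree-three-or-higher iterated integrals beyond that, multiplying $\exp(\Omega)$ against the $X_R$-approximation does not accidentally resurrect a first-order stochastic integral with nonzero conditional mean. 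This amounts to carefully tracking which commutator brackets in the Magnus series can pair with which stochastic terms in $X_R$; the structural fact that the omitted Magnus terms are integrated commutators of third order or higher is precisely what guarantees their expectations are $O(h^2)$ or smaller after pairing, closing the weak bound with $q_1 = 2$ and allowing \cref{GeneralTheoremofMilsteinforSDDEs} to deliver the lemma.
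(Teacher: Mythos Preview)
Your three-piece split is valid but differs from the paper's organisation: the paper writes $Y_{n+1}-X(t_{n+1})=D_1+D_2+D_3$ where $D_1=(\exp(\Omega^{[2]})-\exp(\Omega))x$ isolates the Magnus truncation acting on the initial value alone, while $D_2$ and $D_3$ each compare the MM drift (respectively diffusion) contribution against its exact counterpart $\exp(\Omega)\int X_H^{-1}\tilde f\,\mathrm{d}s$ (respectively $\exp(\Omega)\sum_j\int X_H^{-1}g_j\,\mathrm{d}W_j$). This buys a clean application of \cref{NewLemma3} for $D_1$ and of \cref{NewLemma4} for $D_2,D_3$; your piece~(ii) instead applies the Magnus difference to the full $\tilde Y_n$, which still needs \cref{NewLemma3} for the $x$ summand plus a separate scaling argument for the rest. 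Both routes arrive at the same estimates.

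Two corrections on the weak bound. First, $\mathbb{E}[I_{jj}(t_n,t_{n+1})]=\tfrac12\,\mathbb{E}[(\Delta W_j)^2-h]=0$, not $h/2$; every positive-degree iterated It\^o integral has mean zero, so nothing ``survives'' in the sense you describe. Second, and more substantively, the obstruction is not the expectation of the inner factor but its correlation with the outer exponential: quantities such as $\mathbb{E}_x[\exp(\Omega^{[2]})\,g_j(t_n)\,\Delta W_j]$ are genuinely nonzero and must be shown to match their exact analogues to $O(h^2)$. Your final paragraph correctly flags this pairing as the crux, and your scaling heuristic (degree-three Magnus remainder times a $\Delta W_j$ contributes $O(h^2)$ in expectation) handles the cross terms; but for the dominant $x$ term you still need the explicit comparison $\mathbb{E}_x[\exp(\Omega^{[2]})]-\mathbb{E}_x[\exp(\Omega)]=O(h^2)$, which is precisely \cref{NewLemma3}. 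The paper makes this dependence explicit and then disposes of $D_2$ and $D_3$ by rewriting $\exp(-\Omega(t_n,s))y(s)$ through the It\^o product expansion of \cref{NewLemma4}, rather than via the matrix-exponential Lipschitz bound you propose.
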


By combining \cref{MEMConditions} and \cref{MMConditions} with \cref{GeneralTheoremofMilsteinforSDDEs}, we establish the following OoC result for the MEM and MM schemes.

\begin{theorem}\label{OoCTheorem}
The MEM scheme, for the semilinear SDDE \cref{Equation1}, has mean-square order of convergence $1/2$, and the MM scheme, for the semilinear SDDE \cref{Equation1}, has mean-square order of convergence $1$.
\end{theorem}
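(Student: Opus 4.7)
The plan is to reduce the global mean-square convergence statement to the two local-error lemmas \cref{MEMConditions,MMConditions} via \cref{GeneralTheoremofMilsteinforSDDEs}, applied successively on each Bellman interval $[\sigma_b,\sigma_{b+1}]$ for $b=0,\ldots,N_{\mathrm{total}}-1$. For the MEM scheme, \cref{MEMConditions} yields the local weak exponent $q_1=2$ and local mean-square exponent $q_2=1$; both satisfy the hypotheses $q_2\geqslant 1/2$ and $q_1\geqslant q_2+1/2$ required by \cref{GeneralTheoremofMilsteinforSDDEs}, so the global MS bound \cref{MSBoundSDDE} gives MS OoC $q_2-1/2=1/2$. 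For the MM scheme, \cref{MMConditions} yields $q_1=2$ and $q_2=3/2$, again satisfying the hypotheses, so the resulting global MS OoC is $q_2-1/2=1$.

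The execution I would carry out is as follows. First I would fix a Bellman interval $[\sigma_b,\sigma_{b+1}]$ on which all delayed arguments $X(t-\tau_k)$ are known from previously resolved intervals, so that the restricted SDDE \cref{Equation1} reduces to a genuine (non-delayed) SDE with time-dependent drift and diffusion, matching the hypothesis of \cref{GeneralTheoremofMilsteinforSDDEs}. On this interval the scheme initial value $Y_{n_b}$ and the exact value $X(\sigma_b)$ agree in the base case $b=0$ since $Y_0=X(0)=\phi(0)$, so \cref{MSBoundSDDE} immediately yields the required global order on $[0,\sigma_1]$. For $b\geqslant 1$ I would bootstrap: apply \cref{GeneralTheoremofMilsteinforSDDEs} on $[\sigma_b,\sigma_{b+1}]$ treating the already-computed $Y_{n_b}$ as the initial condition and carrying the previous-interval error into the constant $M$ through the linear-growth and Lipschitz bounds \cref{LinearGrowthBound,LipschitzBound} on $f$ and the $g_j$, which ensure that the local error bounds in \cref{MEMConditions,MMConditions} depend polynomially on $|x|$ and hence transfer stably under the substitution $x\mapsto Y_{n_b}$.

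The main obstacle is precisely this inductive step across Bellman boundaries: one must verify that the local bounds of \cref{MEMConditions,MMConditions} remain valid when the delayed arguments fed into the scheme and into the exact solution are not identical but differ by the accumulated error from previous intervals. To handle this I would exploit the Lipschitz bound \cref{LipschitzBound} together with \cref{FubiniTonelliIto} (Fubini--Tonelli and the It\^o isometry) to bound, on $[\sigma_b,\sigma_{b+1}]$, the mean-square discrepancy between the exact and scheme-driven inhomogeneous parts by a constant times $\sum_{k=1}^K\max_{n:t_n\in[\sigma_b-\tau_k,\sigma_{b+1}-\tau_k]}\mathbb{E}[|Y_n-X(t_n)|^2]$, together with the local one-step error from the current interval. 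Since there are only $N_{\mathrm{total}}\leqslant KT/\tau_{\min}+1$ Bellman intervals, a discrete Gr\"onwall-type induction on $b$ then absorbs the delay-propagated errors into a constant independent of $h$, preserving the orders $1/2$ and $1$ asserted for the MEM and MM schemes respectively.
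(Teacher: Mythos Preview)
Your proposal is correct and matches the paper's approach: the paper's proof of \cref{OoCTheorem} is the single sentence ``By combining \cref{MEMConditions} and \cref{MMConditions} with \cref{GeneralTheoremofMilsteinforSDDEs}, we establish the following OoC result,'' and the preceding text confirms that \cref{GeneralTheoremofMilstein} is indeed applied on each Bellman interval $[\sigma_b,\sigma_{b+1}]$. Your added discussion of the inductive step across Bellman boundaries and the discrete Gr\"onwall argument for delay-propagated errors is a reasonable elaboration of what the paper leaves implicit inside the statement of \cref{GeneralTheoremofMilsteinforSDDEs}.
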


%
%
%
%
%

\subsection{Simulating second-degree It\^o integrals}

In order to simulate the numerical schemes \cref{Milstein} and \cref{SDDEMM}, we require methods to approximate the integrals \cref{IijandIijtauk}. We first consider the non-delayed integral $I_{ij}(t_n,t_{n+1})$, and investigate the delayed case, below. In the case $i=j$, we apply the well-known identity
\[I_{jj}(t_n,t_{n+1})=\frac{1}{2}\left[(\Delta W_j(t_n,t_{n+1}))^2-h_n\right],\]
simulating $I_{jj}(t_n,t_{n+1})$ with the value $\Delta W_j(t_n,t_{n+1})$. When $i\neq j$, then $I_{ij}(t_n,t_{n+1})$ is more complicated to express. One method to simulate $I_{ij}(t_n,t_{n+1})$, developed by Kloeden, Platen, and Wright \cite{KloedenPlatenWright1992}, consists of expanding the inner process as a Fourier series, truncated to finitely many terms. Wiktorsson \cite{Wiktorsson2001} improves this Fourier expansion by giving expression for the remainder term, rather than truncating the Fourier series. An improvement on this remainder term is given by Mrongowius and R\"{o}{\ss}ler \cite{MrongowiusRosler2021}. Alternative expansions are given by Kuznetsov \cite{Kuznetsov2018,Kuznetsov2019}, using different Fourier coefficients.

One simple method to approximate $I_{ij}(t_n,t_{n+1})$ is the trapezium method by Milstein \cite{Milstein1995} (later called the trapezoidal method by Milstein and Tretyakov \cite{MilsteinTretyakov2021}). This method approximates $\tilde{I}_{ij}\approx I_{ij}(t_n,t_{n+1})$ with
\begin{align}
\tilde{I}_{ij}&=\sum_{l=0}^{F_n-1}\frac{\Delta W_i\left(t_n^{(l)},t_n^{(l+1)}\right)\,\Delta W_j\left(t_n^{(l)},t_n^{(l+1)}\right)}{2}\label{RefinedIij}\\
&\quad+\sum_{l=0}^{F_n-1}\Delta W_i\left(t_n^{(l)},t_n^{(l+1)}\right)\,\Delta W_j\left(t_n^{(l+1)},t_n^{(F_n)}\right),\nonumber
\end{align}
dividing $[t_n,t_{n+1}]$ into $F_n$ subintervals of common length, with intermittent time points $t_n^{(l)}=t_n+lh/F_n$ for $l=0,1,\ldots,F_n$.

\begin{remark}\label{SimpleRefinedRemark}
Removing the first summation from the approximation \cref{RefinedIij} produces the rectangle approximation, which is the simplest approximation of $I_{ij}(t_n,\!t_{n+1})$ that maintains the theoretical OoC (1) of the Milstein scheme, as described in \cite[\textbf{Theorem 1.5.4}]{MilsteinTretyakov2021}.
\end{remark}

In the case of the delayed integral, the problem of expressing $I_{ij}^{\tau_k}(t_n,t_{n+1})$ is similar to expressing $I_{ij}(t_n,t_{n+1})$, but with the delayed process values $W_i^{\tau_k}(u)=W_i(u-\tau_k)$ replacing $W_i(u)$, for $u\in[t_n,t_{n+1}]$. Immediately, $W_i^{\tau_k}$ and $W_j$ are independent if $i\neq j$. However, when $i=j$, if the step sizes of the numerical scheme are no larger than the smallest delay value, then the processes $W_j^{\tau_k}$ and $W_j$ are independent on the intervals $[t_n-\tau_k,t_{n+1}-\tau_k]$ and $[t_n,t_{n+1}]$. That is, by simulating the schemes with the condition
\[\max_{n=0,\ldots,N}\{h_n\}<\min_{k=1,\ldots,K}\{\tau_k\},\]
then we may treat every instance of $I_{ij}^{\tau_k}(t_n,t_{n+1})$ similarly to $I_{ij}(t_n,t_{n+1})$, and use similar approximation methods for both.

In this paper, we simulate the double integrals \cref{IijandIijtauk} by approximating both $\tilde{I}_{ij}\approx I_{ij}(t_n,t_{n+1})$ with \cref{RefinedIij} and $\tilde{I}_{ij}^{\tau_k}\approx I_{ij}^{\tau_k}(t_n,t_{n+1})$ with
\begin{align}
\tilde{I}_{ij}^{\tau_k}  &= \sum_{l=0}^{F_n-1}\frac{\Delta W_i^{\tau_k}\left(t_n^{(l)},t_n^{(l+1)}\right)\,\Delta W_j\left(t_n^{(l)},t_n^{(l+1)}\right)}{2}\label{RefinedIijtau}\\
&\quad+\sum_{l=0}^{F_n-1}\Delta W_i^{\tau_k}\left(t_n^{(l)},t_n^{(l+1)}\right)\,\Delta W_j\left(t_n^{(l+1)},t_n^{(F_n)}\right),\quad k=1,\ldots,K,\nonumber
\end{align}
where $\Delta W_i^{\tau_k}(s,t)=W_i(t-\tau_k)-W_i(s-\tau_k)$ denotes a delayed Wiener increment, for $i=1,\ldots,m$. As we noted in \hyperref[SimpleRefinedRemark]{Remark~\ref*{SimpleRefinedRemark}}, simulating the integrals \cref{IijandIijtauk} with \cref{RefinedIij} and \cref{RefinedIijtau} maintains the theoretical OoC of the Milstein scheme \cref{Milstein}. A direct proof that this Milstein SDDE scheme has OoC $1$ is also given by Cao, Zhang, and Karniadakis \cite{CaoZhangKarniadakis2015}. The requirement to simulate the Wiener processes at the intermediate time points $t_n^{(l)}$ becomes computationally cumbersome as the number $m$ of independent Wiener processes increases. Unfortunately, however, using more efficient expressions for the integrals reduces the OoC of the Milstein scheme. For example, Hofmann and M\"uller-Gronbach \cite{Hofmann2006} simulate \cref{Milstein} with the approximations
\begin{align}I_{ij}(t_n,t_{n+1})&\approx\frac{\Delta W_i(t_n,t_{n+1})\,\Delta W_j(t_n,t_{n+1})}{2}\quad\textrm{and}\label{SimpleApproximations}\\
I_{ij}^{\tau_k}(t_n,t_{n+1})&\approx\frac{\Delta W_i^{\tau_k}(t_n,t_{n+1})\,\Delta W_j(t_n,t_{n+1})}{2},\nonumber\end{align}
which results in a numerical scheme of OoC $1/2$. This loss of accuracy, using the Riemann approximations \cref{SimpleApproximations}, is caused by the unbounded variation of the inner process $\int_{t_n}^s\,\mathrm{d}W_i(u)=W_i(s)-W_i(t_n)$.


\section{Examples}\label{Section4} In each of the following examples, we simulate \cref{Equation1} with our numerical schemes, and present the error graphs showing the mean-square errors, simulated with $n_\mathrm{t}$ trials. Specifically, at time $T$, we approximate the solution $X(T)$ using a reference solution with a small step size $h_\mathrm{R}$, along with the scheme approximation $Y_N=Y(t_N)$ simulated with a corresponding step size $h$, for each scheme. In every example, we simulate the Wiener processes and the reference solution on a fine time mesh of the form $(nh_\mathrm{R})_{n=0,\ldots,T/h_\mathrm{R}}$, while the scheme mesh is of the form $(nh)_{n=0,\ldots,T/h}$, where $N=T/h$. Every step size $h$ is an integer multiple of $h_\mathrm{R}$. We use the reference step size to define the number $F_n=h/h_\mathrm{R}$ of subintervals used to simulate the integral approximations \cref{RefinedIij} and \cref{RefinedIijtau}. \hyperref[Example1]{Example~\ref*{Example1}} demonstrates the case of a linear SDDE with one delay, \hyperref[Example2]{Example~\ref*{Example2}} includes a semilinear SDDE with one delay, \hyperref[Example3]{Example~\ref*{Example3}} uses a semilinear SDDE with two delays, and \hyperref[Example4]{Example~\ref*{Example4}} includes an SPDDE.

Letting $X^{(i)}(T)$ and $Y^{(i)}_N$ respectively denote the reference solution and numerical scheme for the $i^\mathrm{th}$ simulation of \cref{Equation1}, the \emph{mean-square error} (MSE) is calculated as
\begin{equation}
\mathrm{MSE}(T)=\left(\mathbb{E}\abs{Y_N-X(T)}^2\right)^{1/2}=\bigg(\frac{1}{n_\mathrm{t}}\sum_{i=1}^{n_\mathrm{t}}\abs{Y^{(i)}_N-X^{(i)}(T)}^2\bigg)^{1/2}.
\end{equation}
In the error graphs, we show the mean-square convergence orders of the EM, Milstein, MEM, and MM schemes, along with reference slopes of orders of convergence $1/2$ and $1$.

\begin{example}\label{Example1}
This example demonstrates the case of the single-delay ($K=1$), linear (in $X(t)$) \cref{Equation1}, where each function $g_j$ does not depend on $X(t)$. In this example, we select spatial dimension $d=2$, delay time $\tau_1=1$, terminal time $T=6$, $m=2$ Wiener processes, and constant initial process $\phi(t)=(0.8,\,0.2)^\intercal$ for all $t\in[-1,0]$. We also select parameters
\begin{align}
A_0&=\begin{pmatrix}
         -0.1 & 0.4 \\
         -0.3 & 0.2 \\
       \end{pmatrix},&&g_0\left(t,\begin{pmatrix} x_1 \\ x_2 \\ \end{pmatrix},\begin{pmatrix}
           y_1 \\
           y_2 \\
         \end{pmatrix}
       \right)=\dfrac{1}{10}\begin{pmatrix}
                        \cos(y_1+y_2) \\
                        y_2-y_1^2 \\
                      \end{pmatrix},\label{Example1Parameters}\\
A_1&=\begin{pmatrix}
           0.3 & 0.1 \\
           0 & 0.2 \\
         \end{pmatrix},&&g_1\left(t,\begin{pmatrix} x_1 \\ x_2 \\ \end{pmatrix},\begin{pmatrix}
           y_1 \\
           y_2 \\
         \end{pmatrix}
       \right)=\dfrac{1}{3}\begin{pmatrix}
                        \sin(y_1)+\exp\left(-y_2^2\right) \\
                        \arctan(y_1)+\cos(y_2) \\
                      \end{pmatrix},\nonumber\\
A_2&=\begin{pmatrix}
             0.1 & 0 \\
             0.3 & 0.1 \\
           \end{pmatrix},&&g_2\left(t,\begin{pmatrix} x_1 \\ x_2 \\ \end{pmatrix},\begin{pmatrix}
           y_1 \\
           y_2 \\
         \end{pmatrix}
       \right)=\begin{pmatrix}
                     0.18 & 0.04 \\
                     0.21 & 0.03 \\
                   \end{pmatrix}\begin{pmatrix}
           y_1 \\
           \arctan(y_2) \\
         \end{pmatrix},\nonumber
\end{align}
for \cref{Equation1}. The reference solution is formed with the Milstein scheme of step size $h_\mathrm{R}=2^{-14}$ and the numerical schemes are simulated using step sizes $h=2^0,2^{-1},\ldots,2^{-10}$, with the resulting error graphs shown in \Cref{A} (a), using $n_\mathrm{t}=5,000$ trials.
\end{example}

\begin{example}\label{Example2} We now simulate the Magnus schemes for a semilinear \cref{Equation1}, where the values for $K,d,\tau,T,m,A_0,A_1,A_2$, and $\phi(t)$, are as in \hyperref[Example1]{Example~\ref*{Example1}}, but with nonlinear functions $g_0,g_1,g_2$, given by
\begin{align}
g_0\left(t,\begin{pmatrix}x_1\\x_2\\ \end{pmatrix},\begin{pmatrix}y_1\\y_2\\ \end{pmatrix}\right)&=\frac{1}{3}\begin{pmatrix}\cos(x_1+x_2+y_1+y_2) \\ \sin(x_1+x_2+y_1+y_2) \\ \end{pmatrix},\label{Example2Parameters}\\
g_1\left(t,\begin{pmatrix}x_1\\x_2\\ \end{pmatrix},\begin{pmatrix}y_1\\y_2\\ \end{pmatrix}\right)&=\frac{1}{9}\begin{pmatrix}\cos(x_1) \\ \sin(x_2) \\ \end{pmatrix}+\frac{1}{5}\begin{pmatrix}\sin(y_1)+\exp\left(-y_2^2\right) \\ \arctan(y_1)+\cos(y_2) \\ \end{pmatrix},\nonumber\\
g_2\left(t,\begin{pmatrix}x_1\\x_2\\ \end{pmatrix},\begin{pmatrix}y_1\\y_2\\ \end{pmatrix}\right)&=\frac{1}{7}\begin{pmatrix}\sin(x_2) \\ \cos(x_1) \\ \end{pmatrix}+\begin{pmatrix}0.04 & 0.05 \\0.06 & 0.04 \\ \end{pmatrix}\begin{pmatrix}1/\left(1+y_1^2\right) \\ \arctan(y_2) \\ \end{pmatrix}.\nonumber
\end{align}
We again simulate the reference solution with the Milstein scheme, using step size $h_\mathrm{R}=2^{-14}$, along with the numerical schemes for step sizes $h=2^0,2^{-1},\ldots,2^{-10}$. The error graphs for this example are shown in \Cref{A} (b), produced with $n_\mathrm{t}=5,000$ trials.
\end{example}

\begin{example}\label{Example3} We show in \Cref{A} (c) the error graphs of the numerical approximations for \cref{Equation1} when there are $K=2$ delays $\tau_1=1$ and $\tau_2=1/4$, while $d,T,m$, and $\phi(t)$, are as in the above examples, but the matrices $A_0,A_1,A_2$ and functions $g_0,g_1,g_2$ are given by
\begin{align}
A_0&=\begin{pmatrix}-0.1&0.03\\-0.2&-0.04\\ \end{pmatrix},&&g_0\left(t,\begin{pmatrix}x_1\\x_2\\ \end{pmatrix},\begin{pmatrix}y_1\\y_2\\ \end{pmatrix},\begin{pmatrix}z_1\\z_2\\ \end{pmatrix}\right)=\frac{1}{5}\begin{pmatrix} \sin(x_1) \\ \cos(x_2) \\ \end{pmatrix},\label{Example3Parameters}\\
A_1&=\begin{pmatrix}0.15&0.1\\0.2&0.1\\ \end{pmatrix},&&g_1\left(t,\begin{pmatrix}x_1\\x_2\\ \end{pmatrix},\begin{pmatrix}y_1\\y_2\\ \end{pmatrix},\begin{pmatrix}z_1\\z_2\\ \end{pmatrix}\right)=\frac{1}{10}\begin{pmatrix} z_1-y_1 \\ y_2-z_2 \\ \end{pmatrix},\nonumber\\
A_2&=\begin{pmatrix}0.05&0.03\\0.04&0.01\\ \end{pmatrix},&&g_2\left(t,\begin{pmatrix}x_1\\x_2\\ \end{pmatrix},\begin{pmatrix}y_1\\y_2\\ \end{pmatrix},\begin{pmatrix}z_1\\z_2\\ \end{pmatrix}\right)=\frac{1}{5}\begin{pmatrix} \sin(x_2y_2z_2) \\ \cos(x_1y_1z_1) \\ \end{pmatrix}.\nonumber
\end{align}
These error graphs are produced with $n_\mathrm{t}=10,000$ trials. The reference solution is formed with the Milstein scheme of step size $h_\mathrm{R}=2^{-14}$, and the numerical schemes are simulated for step sizes $h=2^{-2},2^{-3},\ldots,2^{-10}$, so that the largest step size does not exceed the smallest delay value.
\end{example}

\begin{figure}[tbhp]
\centering
\subfloat[error graphs, linear SDDE, one delay]{\includegraphics[width=0.3\linewidth,trim=2 0 0 0,clip]{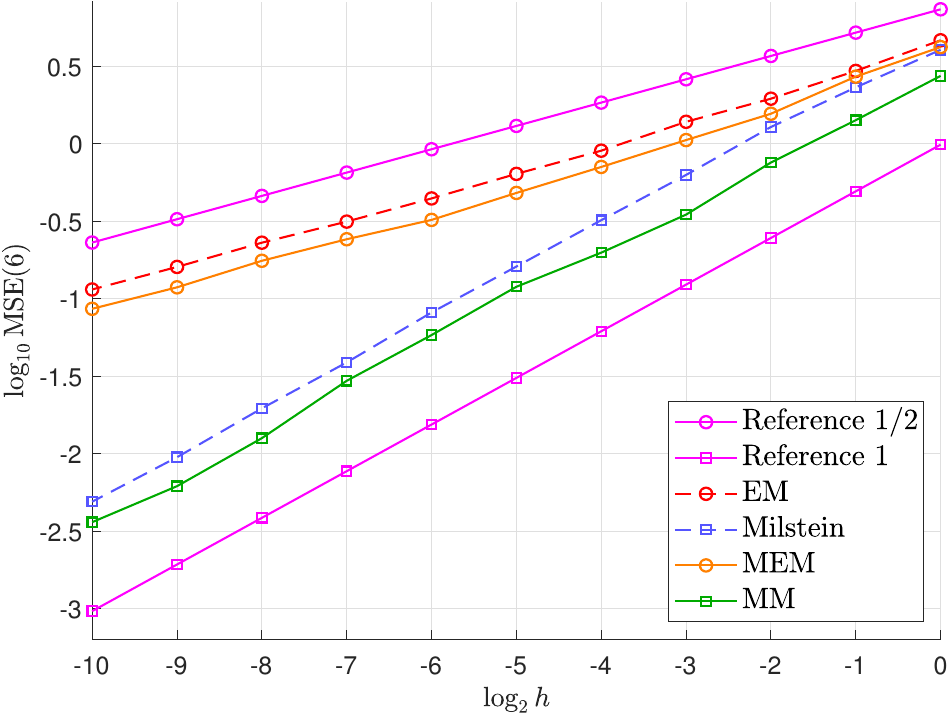}}\hspace{0.01\linewidth}
\subfloat[error graphs, semilinear SDDE, one delay]{\includegraphics[width=0.3\linewidth,trim=2 0 0 0,clip]{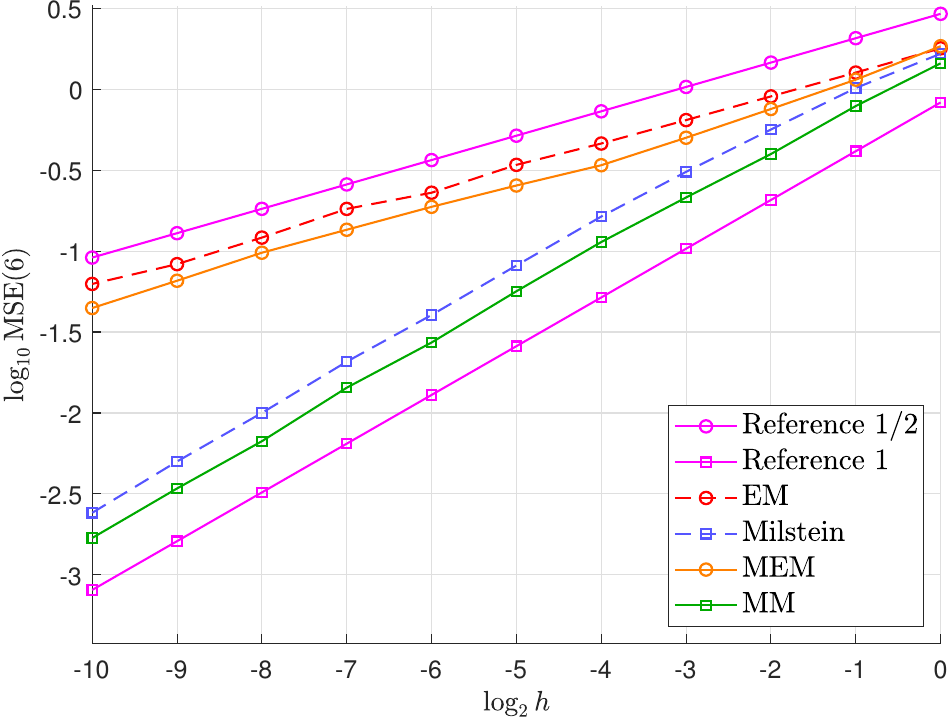}}\hspace{0.01\linewidth}
\subfloat[error graphs, semilinear SDDE, $K=2$ delays]{\includegraphics[width=0.3\linewidth,trim=2 0 0 0,clip]{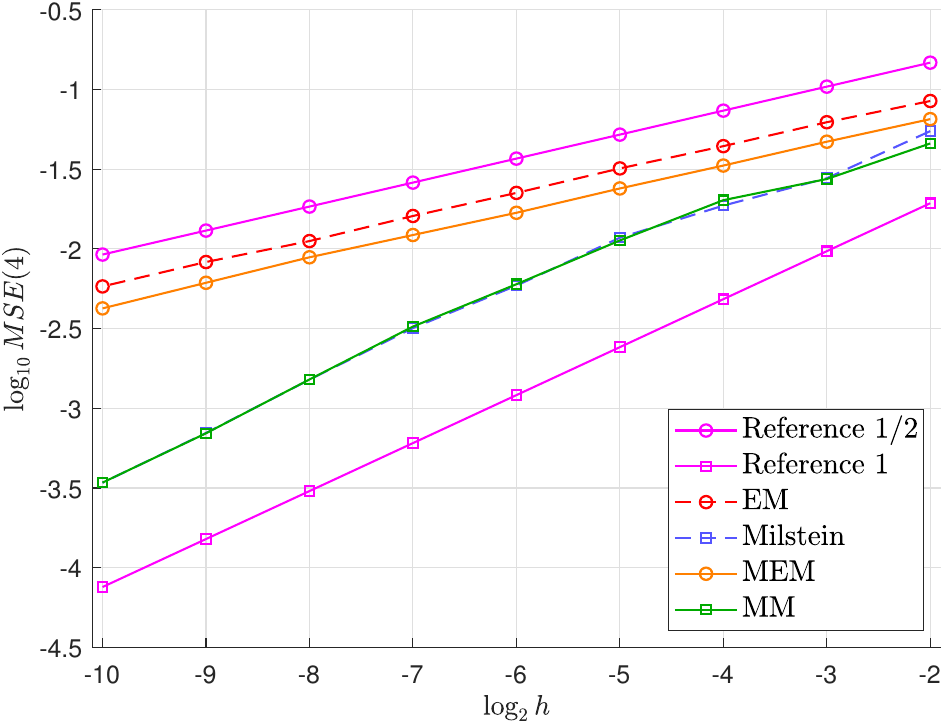}}
\caption{Error graphs of the Magnus-type approximations (shown in solid lines) for \cref{Equation1}, along with the regular Taylor approximations (dashed lines). Each simulation demonstrates the numerical schemes attaining their theoretical orders of convergence, where the EM and MEM schemes display OoC-$1/2$ behavior, while the Milstein and MM schemes display OoC-$1$ behavior.}
\label{A}
\end{figure}

\Cref{A} shows, in each of \hyperref[Example1]{Examples~\ref*{Example1}}, \ref{Example2}, and \ref{Example3}, the numerical schemes demonstrating their theoretical orders of convergence. That is, the EM and MEM schemes show OoC-$1/2$ behavior, while the Milstein and MM schemes show OoC-$1$ behavior. \Cref{A} shows this by plotting the $\log_2$ step sizes against the $\log_{10}$ errors, for each example. We also observe the magnitudes of the errors depending on the equation parameters. For example, at the finest step size $h=2^{-10}$, the strong-order-$1$ schemes have errors approximately $10^{-2.5}$, in \hyperref[Example1]{Examples~\ref*{Example1}} and \ref{Example2}, while these schemes have errors $10^{-3.5}$, in \hyperref[Example3]{Example~\ref*{Example3}}.

To demonstrate the applicability of our schemes to spatially extended systems, we now simulate the solutions of an SPDDE. This example highlights how the MEM scheme can outperform the EM scheme under spatial refinement.

\subsection{Application to the delayed stochastic heat equation}

\begin{example}\label{Example4} We numerically solve the SPDDE \cref{Equation3} by simulating the solutions of \cref{Equation4} with the EM and MEM schemes, and in doing this, we observe an advantage that the MEM scheme has over the EM scheme. Due to condition \cref{StabilityCondition}, the EM approximation requires a step size $h$ that decreases squarely proportionally with the spatial discretization $\Delta x$, while the MEM scheme is not restricted by this condition and achieves its order of convergence for larger $h$. We also present and compare sample solutions of \cref{Equation3} in the cases of no noise ($c=0$, so that \cref{Equation3} becomes a deterministic PDE), uncorrelated noise, and also correlated noise. Uncorrelated noise is the simpler to simulate, but we also demonstrate the simulation with correlated noise, to present a realistic scenario. To show how these situations affect the solution process, we show cross sections of the solutions, first for fixed $x$ and then for fixed $t$. On each of these cross sections, we superimpose the cross sections of the solution in the case of no delay ($\tau=0$), to demonstrate how the delay affects the model.

We select the delayed cooling function
\begin{equation}C_U(t-\tau,x)=\begin{cases}\left(\dfrac{x-b}{b-a}\right)r_aU(t-\tau,a)+\left(\dfrac{a-x}{b-a}\right)r_bU(t-\tau,b),&x\in(0,1),\\
0,&\textrm{otherwise},\end{cases}\label{CoolingFunction}\end{equation}
for constants $0<a<b<1$ and constant rates $r_a,r_b\geqslant0$. This cooling function represents the scenario where we observe the temperature at positions $x=a$ and $x=b$, and then apply external cooling in attempt to shift the total temperature towards its average, while the delay $\tau$ is the time between observing the temperature and applying the cooling. For example, if there is no delay ($\tau=0$) and we choose rates $r_a=r_b=1$, then the cooling $C_U$ is the linear segment from $U(t,a)$ to $U(t,b)$, multiplied by $-1$.

The $Q$-Wiener process is defined in terms of a spatial covariance function $Q:[0,1]\times[0,1]\to[0,\infty)$. In this example, we use the identity function $Q(x,y)=\mathbb{I}(x=y)$ in the case of uncorrelated noise, while using
\begin{equation}Q(x,y)=\min\{x,y\}\label{QFunction}\end{equation}
to model correlated noise. The function $Q$ given by \cref{QFunction} is known (see \cite[page 37]{GhanemSpanos1991Book}) to have eigenvalues and eigenfunctions, $\lambda_j$ and $\phi_j$, respectively given by
\[\lambda_j=\frac{4}{\pi^2(2j-1)^2}\quad\textrm{and}\quad\phi_j(x)=\sqrt{2}\sin\bigg(\frac{x}{\sqrt{\lambda_j}}\bigg),\quad j=1,\ldots,m,\ldots.\]

In order to simulate the system \cref{Equation4}, we convert to a system of the form of \cref{Equation1}. This is achieved by applying the Karhunen--Lo\`eve Theorem (\cref{AppendixSPDE}, \cref{QKLTheorem}), which expresses $W^c$ as a countable linear combination of independent Wiener processes. Truncated to $m$ terms, we approximate $W^c$ and $\int_0^tU(s,x)\,\mathrm{d}W^c(s)$ with
\begin{align}
W^c(t,x)&\approx\sum_{j=1}^m\sqrt{\lambda_j}\phi_j(x)W_j(t),\quad\label{TruncatedWcandIWc}\\
\int_0^tU(s,x)\,\mathrm{d}W^c(s)&\approx\sum_{j=1}^m\int_0^tU(s,x)\sqrt{\lambda_j}\phi_j(x)\,\mathrm{d}W_j(s),\nonumber
\end{align}
for each $(t,x)\in[0,\infty)\times[0,1]$. We also discretize space into $d=m$ intervals, with $\Delta x=1/d$, and we select the points $a=x_1$ and $b=x_{d-1}$ close to the spatial boundary.

Using \cref{TruncatedWcandIWc}, we simulate samples of $W^c$. These simulations are given in \Cref{D}, showing the difference between uncorrelated and correlated noise. We produce these figures by truncating the expansion \cref{QWienerFullExpansion} to $50$ terms, while time and space are discretized and simulated at points $(t,x)=(n2^{-16},j/50)$, for $n=0,1,\ldots,2^{18}$ and $j=0,\ldots,50$.

\begin{figure}[tbhp]
\centering
\subfloat[Heat map of a sample $Q$-Wiener process for uncorrelated noise, $Q(x,y)=\mathbb{I}(x=y)$.]{\includegraphics[width=0.3\linewidth,trim=2 0 0 0,clip]{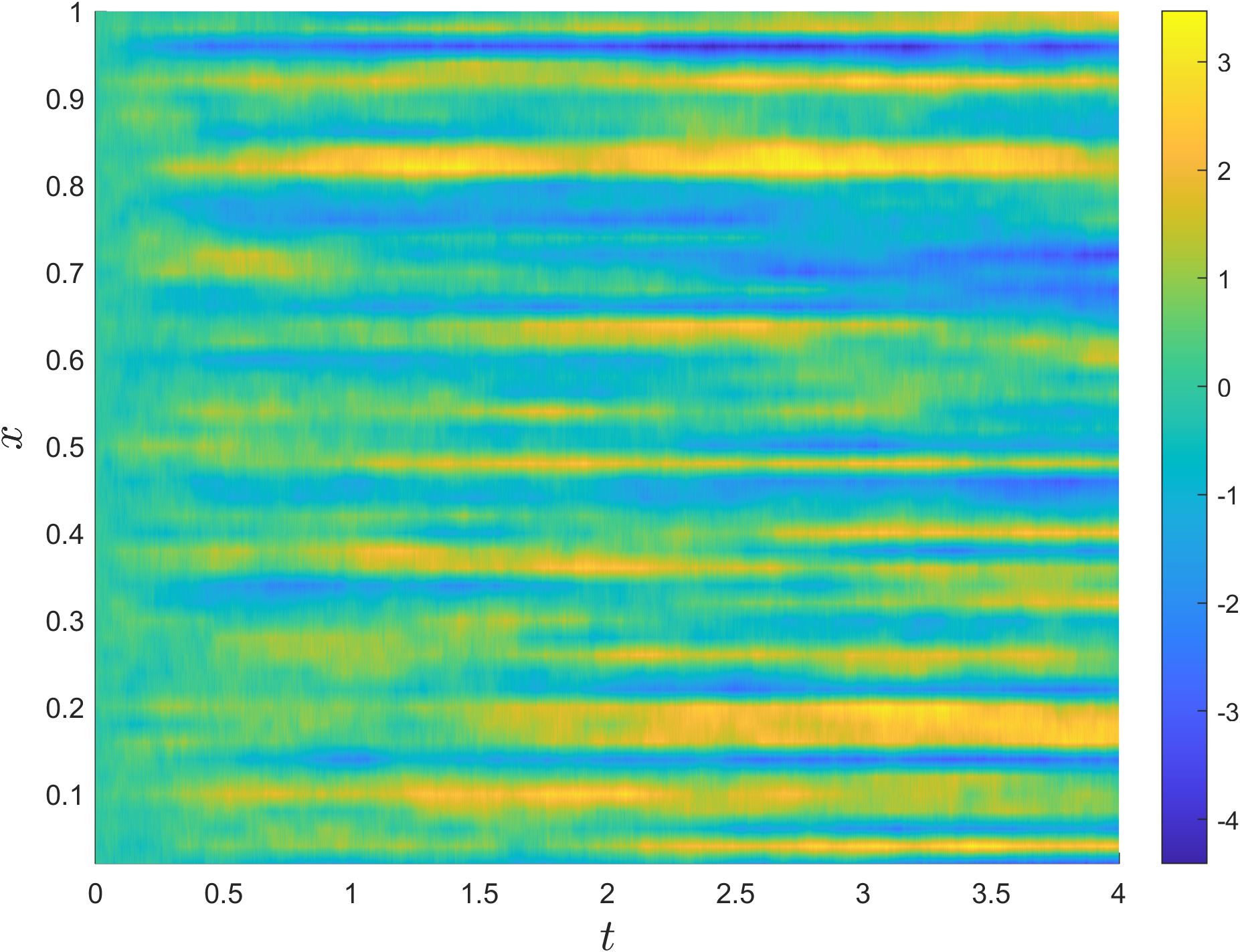}}\hspace{1cm}
\subfloat[Heat map of a sample $Q$-Wiener process for correlated noise, with $Q$ defined by \cref{QFunction}.]{\includegraphics[width=0.3\linewidth,trim=2 0 0 0,clip]{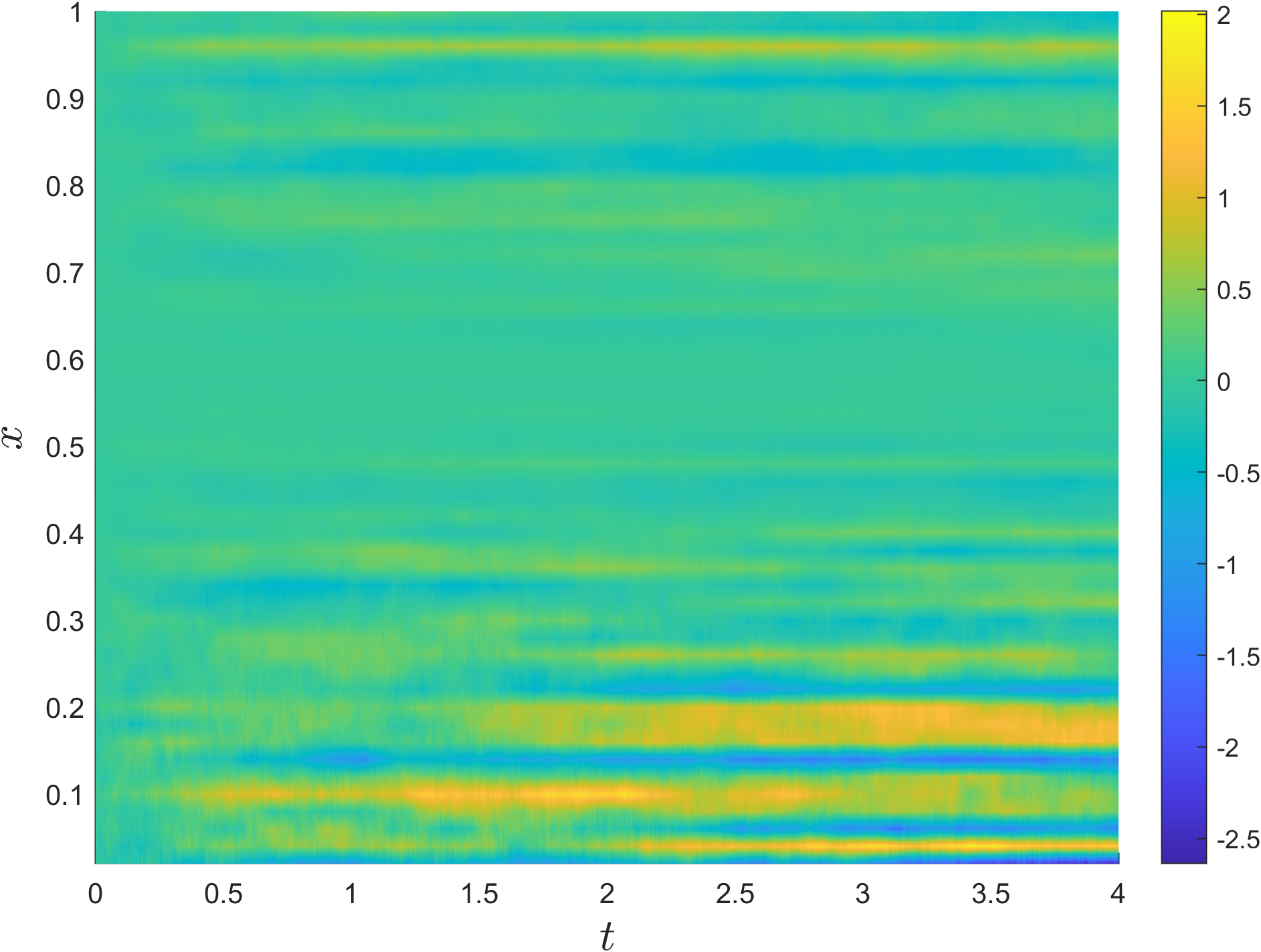}}
\caption{Sample $Q$-Wiener processes, for each case of uncorrelated and correlated noise.}
\label{D}
\end{figure}

Recalling $x_j=j\,\Delta x$ and defining both $U_j(t)=U(t,x_j)$ and $W^c_j(t)=W^c(t,x_j)$ for each $j=0,\ldots,d$, system \cref{Equation4} becomes
\begin{align}
\mathrm{d}U_0(t)&=0\,\mathrm{d}t,\label{BigSDDESystem}\\
\mathrm{d}U_j(t)&=\left[D\frac{U_{j-1}(t)+U_{j+1}(t)-2U_j(t)}{(\Delta x)^2}+\left(\frac{x_j-b}{b-a}\right)r_aU_1(t-\tau)\right.\nonumber\\
&\quad\quad\left.+\left(\frac{a-x_j}{b-a}\right)r_bU_{d-1}(t-\tau)\right]\,\mathrm{d}t+\frac{cU_j(t)}{\sqrt{\Delta x}}\,\mathrm{d}W_j^c(t),\nonumber\\
\mathrm{d}U_d(t)&=0\,\mathrm{d}t,\nonumber
\end{align}
for $j=1,\ldots,d-1$ and $t\geqslant0$, equipped with $(U_0(t),\ldots,U_d(t))=(T_0(x_0),\ldots,T_0(x_d))$ when $t\in[-\tau,0]$. Due to the boundary conditions, the boundary processes are fixed with $U_0(t)=U_d(t)=0$ for all $t\geqslant0$, so to simplify the indexing, we omit the cross section $(U_0(t,x_0))_{t\geqslant0}$ in the following construction. At the spatial point $x_i$, the truncation \cref{TruncatedWcandIWc} gives
\begin{align}
&\int_0^tU_i(s)\,\mathrm{d}W^c(s)=\int_0^tU(s,x_i)\,\mathrm{d}W^c(s)\label{IntApproxEqual}\\
&\hspace{1cm}\approx\sum_{j=1}^m\int_0^tU(s,x_i)\sqrt{\lambda_j}\phi_j(x_i)\,\mathrm{d}W_j(s)=\sum_{j=1}^m\int_0^tU_i(s)\sqrt{\lambda_j}\phi_j(x_i)\,\mathrm{d}W_j(s).\nonumber
\end{align}
\begin{remark}
In the case of uncorrelated noise, \cref{IntApproxEqual} reduces to
\[\int_0^tU_i(s)\,\mathrm{d}W^c(s)=\int_0^tU_i(s)\,\mathrm{d}W_i(s),\quad i=1,\ldots,m-1.\]
\end{remark}

We assume the approximation is equality in \cref{IntApproxEqual}, so that by writing $\bm{U}(t)=(U_1(t),\ldots,U_d(t))^\intercal$, the system \cref{BigSDDESystem} becomes the $d$-dimensional SDDE
\begin{align}
\mathrm{d}\bm{U}(t)&=\left[\frac{D}{(\Delta x)^2}A_0+\frac{r_a}{d-2}v_1\,U_1(t-\tau)+\frac{r_b}{d-2}v_2\,U_{d-1}(t-\tau)\right]\,\mathrm{d}t\label{BigMatrixSDDE}\\
&\quad+\sum_{j=1}^m\frac{c}{\sqrt{\Delta x}}A_j\bm{U}(t)\,\mathrm{d}W_j(t),\nonumber
\end{align}
where $A_0=(A_0^{ij})_{i,j=1,\ldots,d}$ is the tridiagonal matrix with $(i,j)$ entry
\begin{equation}A_0^{ij}=\begin{cases}-2&\textrm{if }j=i<d,\\ 1&\textrm{if }j=i\pm1\;\textrm{and}\;i<d,\\ 0&\textrm{otherwise},\end{cases}\label{SPDDEA0}\end{equation}
while $v_1=(v_1^1,\ldots,v_1^d)^\intercal$ and $v_2=(v_2^1,\ldots,v_2^d)^\intercal$ are vectors with entries
\[\left(v_1^j,v_2^j\right)=\begin{cases}\left(j+1-d,\,1-j\right)&\textrm{if }j=1,\ldots,d-1,\\ (0,\,0)&\textrm{if }j=d,\end{cases}\]
and $A_j$ is the diagonal matrix
\begin{equation}A_j=\frac{c}{\sqrt{\Delta x}}\sqrt{\lambda_j}\mathrm{diag}\left(\phi_j(x_1),\phi_j(x_2),\ldots,\phi_j(x_d)\right),\label{SPDDEAj}\end{equation}
for each $j=1,\ldots,m$. \Cref{BigMatrixSDDE} is a linear SDDE of the form
\begin{equation}\mathrm{d}\bm{U}(t)=[A_0\bm{U}(t)+f(t,\bm{U}(t-\tau))]\,\mathrm{d}t+\sum_{j=1}^mA_j\bm{U}(t)\,\mathrm{d}W_j(t),\label{LinearSDDEfromSPDDE}\end{equation}
which is a particular case of \cref{Equation1}.

\begin{remark}
In the case of uncorrelated noise, \cref{LinearSDDEfromSPDDE} holds similarly, but \cref{SPDDEAj} simplifies so that $A_j$ has $(j,j)$ entry $c/\sqrt{\Delta x}$, with all other entries being zero.
\end{remark}

We numerically solve \cref{LinearSDDEfromSPDDE} with the EM and MEM schemes, in each case of uncorrelated and correlated noise.  In these simulations, we select a spatial discretization with dimension $d=50$, delay value $\tau=1$, cooling rates $r_a=1$ and $r_b=10$, noise constant $c=0.15$, diffusion coefficient $D=1/25$, and initial temperature $T_0(x)=\sin(2\pi x)$. With this selection of parameters, condition \cref{StabilityCondition} reduces to $100h<1/2$. For $h$ of the form $h=2^{-i}$, for $i\in\mathbb{N}$, the Euler--Maruyama approximation is numerically stable for $i\geqslant8$, but \cref{StabilityCondition} is breached for $h>2^{-8}$. We observe this in the error graphs for this example, which we show in \Cref{B}. These error graphs are produced with $n_\mathrm{t}=1,000$ trials, for both the cases where \cref{StabilityCondition} is satisfied (top row), and also for the general case of larger step sizes (bottom row). We show the error graphs for the cases of uncorrelated noise (first column) and correlated noise (second column). We simulate the schemes with step sizes $h=2^0,\ldots,2^{-13}$. Errors are calculated using a reference solution, simulated with the EM scheme of step size $h_\mathrm{R}=2^{-16}$.

\begin{figure}[tbhp]
\centering
\subfloat[error graphs, EM stability condition not satisfied, uncorrelated noise]{\includegraphics[width=0.3\linewidth,trim=2 0 0 0,clip]{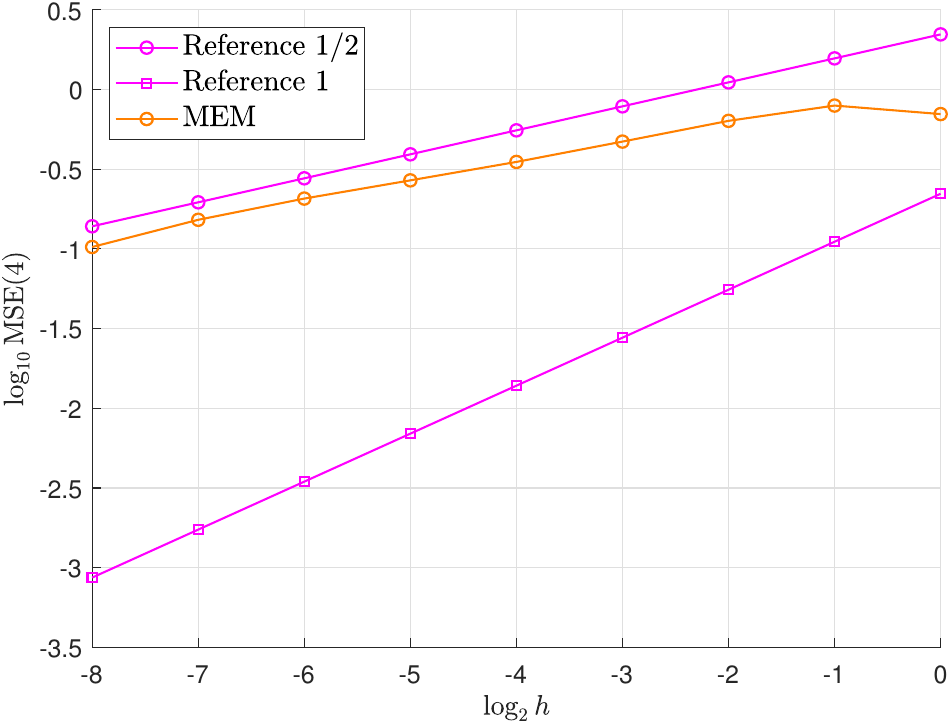}}\hspace{1cm}
\subfloat[error graphs, EM stability condition not satisfied, correlated noise]{\includegraphics[width=0.3\linewidth,trim=2 0 0 0,clip]{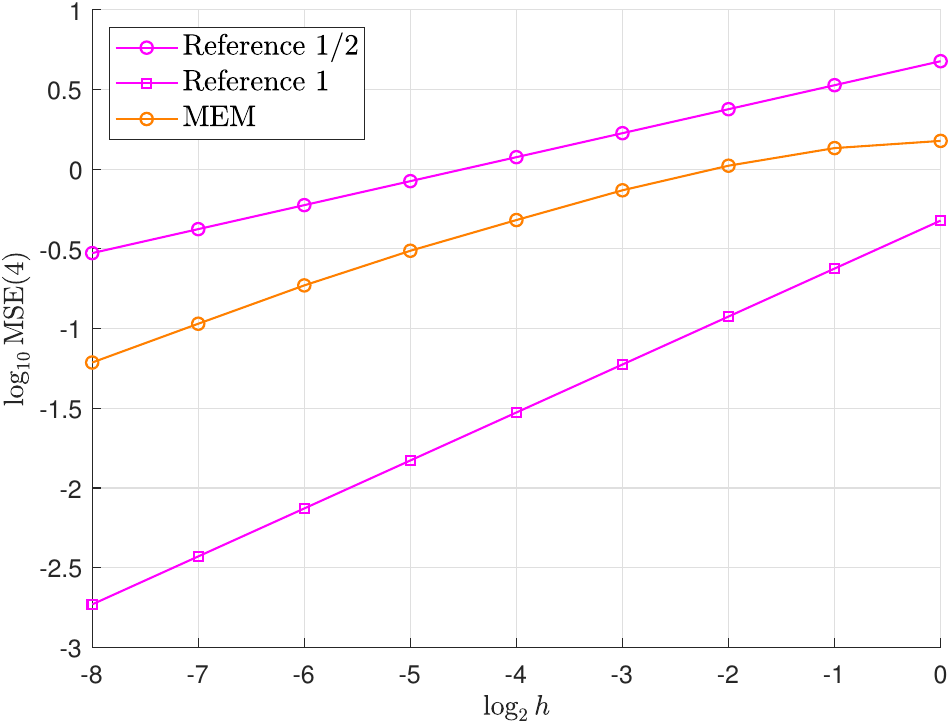}}
\par\bigskip
\subfloat[error graphs, EM stability condition satisfied, uncorrelated noise]{\includegraphics[width=0.3\linewidth,trim=2 0 0 0,clip]{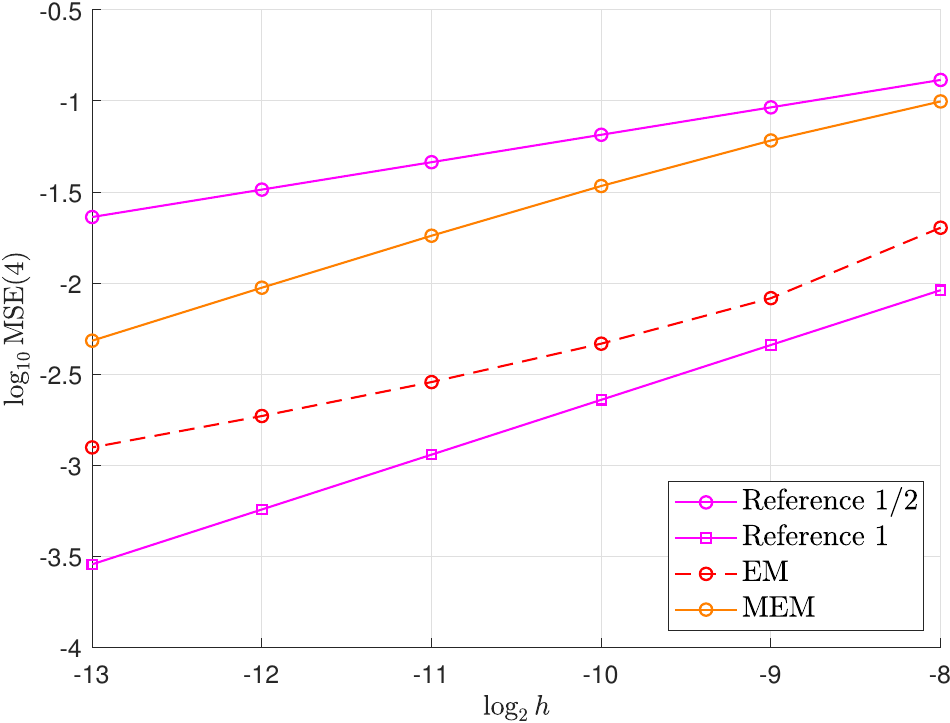}}\hspace{1cm}
\subfloat[error graphs, EM stability condition satisfied, correlated noise]{\includegraphics[width=0.3\linewidth,trim=2 0 0 0,clip]{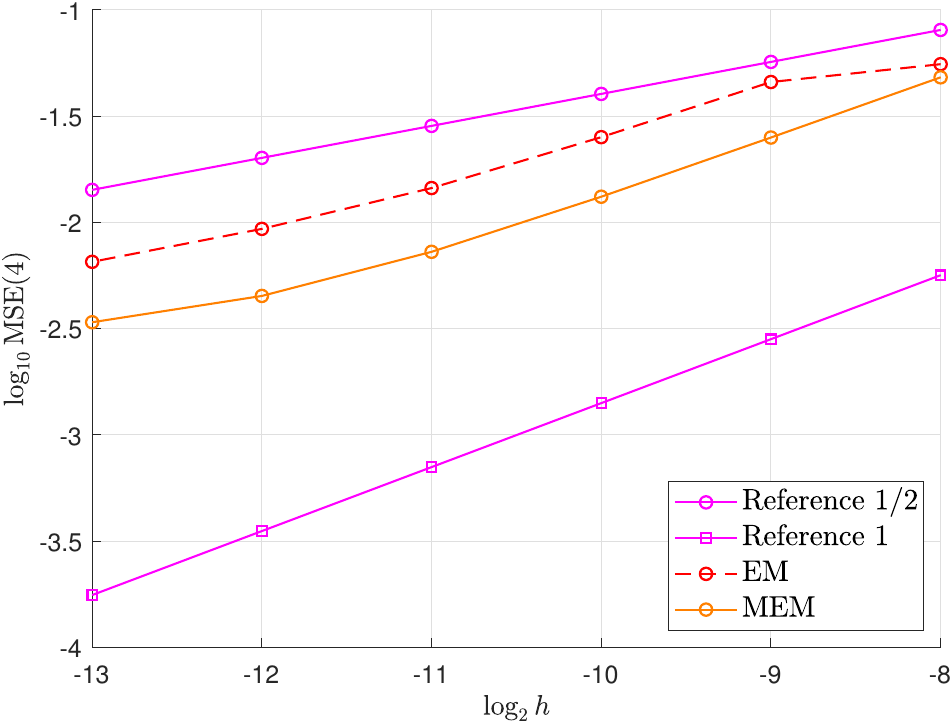}}
\caption{Error graphs for the high-dimensional SDDE \cref{BigMatrixSDDE}. The plots are separated according to whether the EM stability condition \cref{StabilityCondition} is satisfied. A large spatial discretization corresponds to \cref{StabilityCondition} not being satisfied. These graphs show that when \cref{StabilityCondition} is satisfied, both the EM and MEM schemes achieve OoC $1/2$. When the condition is not satisfied and the EM scheme is inappropriate to use, the MEM scheme retains its order of convergence, providing an ideal alternative to the EM scheme.}
\label{B}
\end{figure}

\Cref{B} shows the error graphs of the EM and MEM approximations for \cref{BigMatrixSDDE} in the cases of uncorrelated noise (first column) and correlated noise (second column). In each column, we show the error graphs across two rows. We see the EM scheme performing better (in terms of reduced error) in the case of uncorrelated noise, while the MEM scheme is superior with correlated noise, due to the matrices in \cref{BigMatrixSDDE}. The first row of \Cref{B} shows the errors in the schemes for the larger step sizes, $h=2^0,\ldots,2^{-8}$, and the second row shows the errors for the smaller step sizes, $h=2^{-8},\ldots,2^{-13}$. In the case of the larger step sizes, the condition \cref{StabilityCondition} is breached, so the EM scheme is inappropriate to use, but here the MEM scheme displays OoC-$1/2$ behavior, showing that the MEM scheme is suitable to use in place of the EM scheme for these larger (temporal) step sizes. This situation of \cref{StabilityCondition} being breached corresponds to high spatial discretization. We also see that when \cref{StabilityCondition} is satisfied, in the case of smaller step sizes, then either numerical scheme achieves OoC $1/2$. That is, when both the EM and MEM schemes are numerically stable, then either method is viable. To determine which method we should use in this case, we may consider other issues, such as computational efficiency.

\begin{figure}[tbhp]
\centering
\subfloat[sample solution, $c=0$]{\includegraphics[width=0.3\linewidth,trim=2 0 0 20,clip]{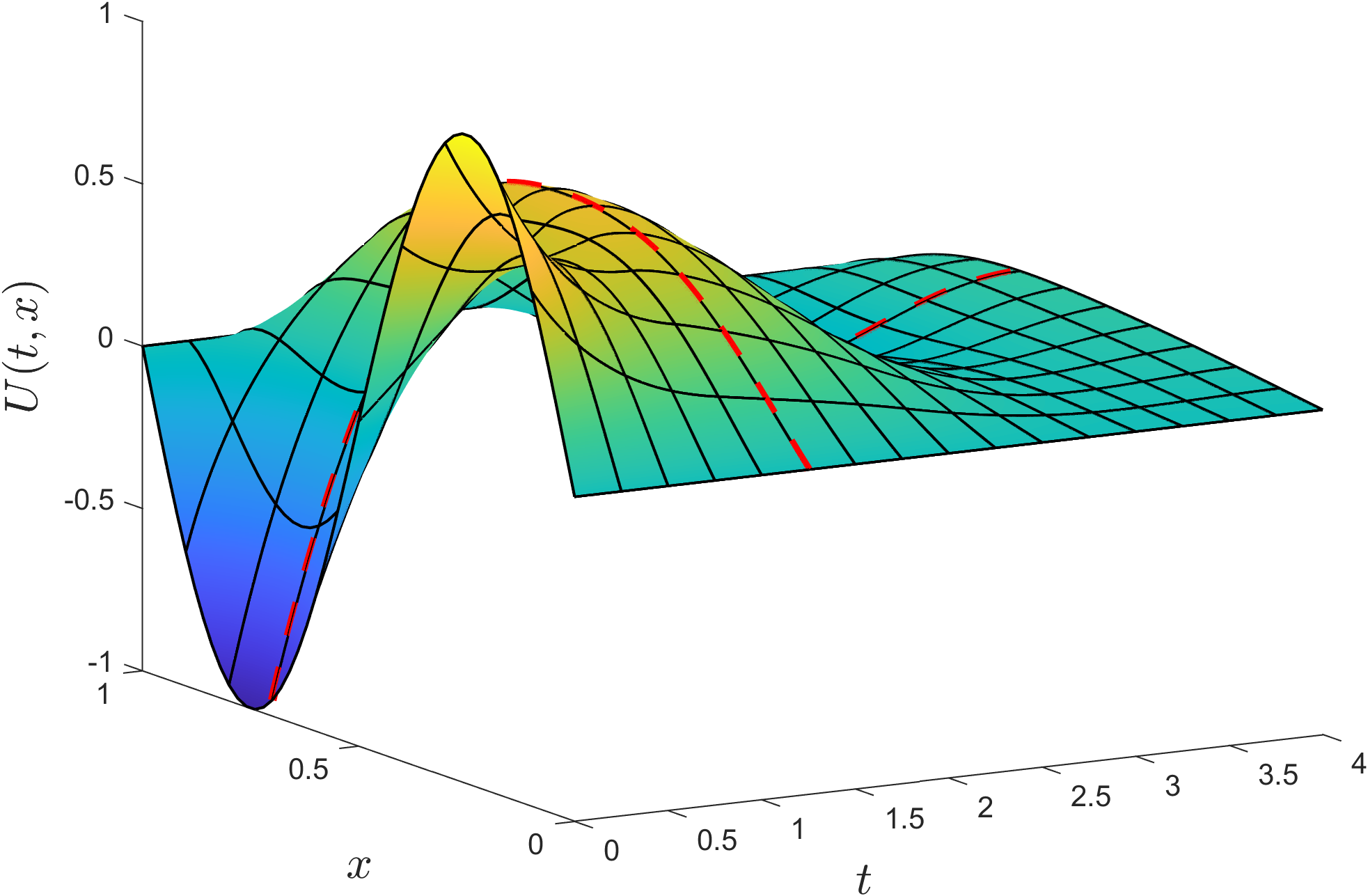}}\hfill
\subfloat[sample solution, uncorrelated noise]{\includegraphics[width=0.3\linewidth,trim=2 0 0 20,clip]{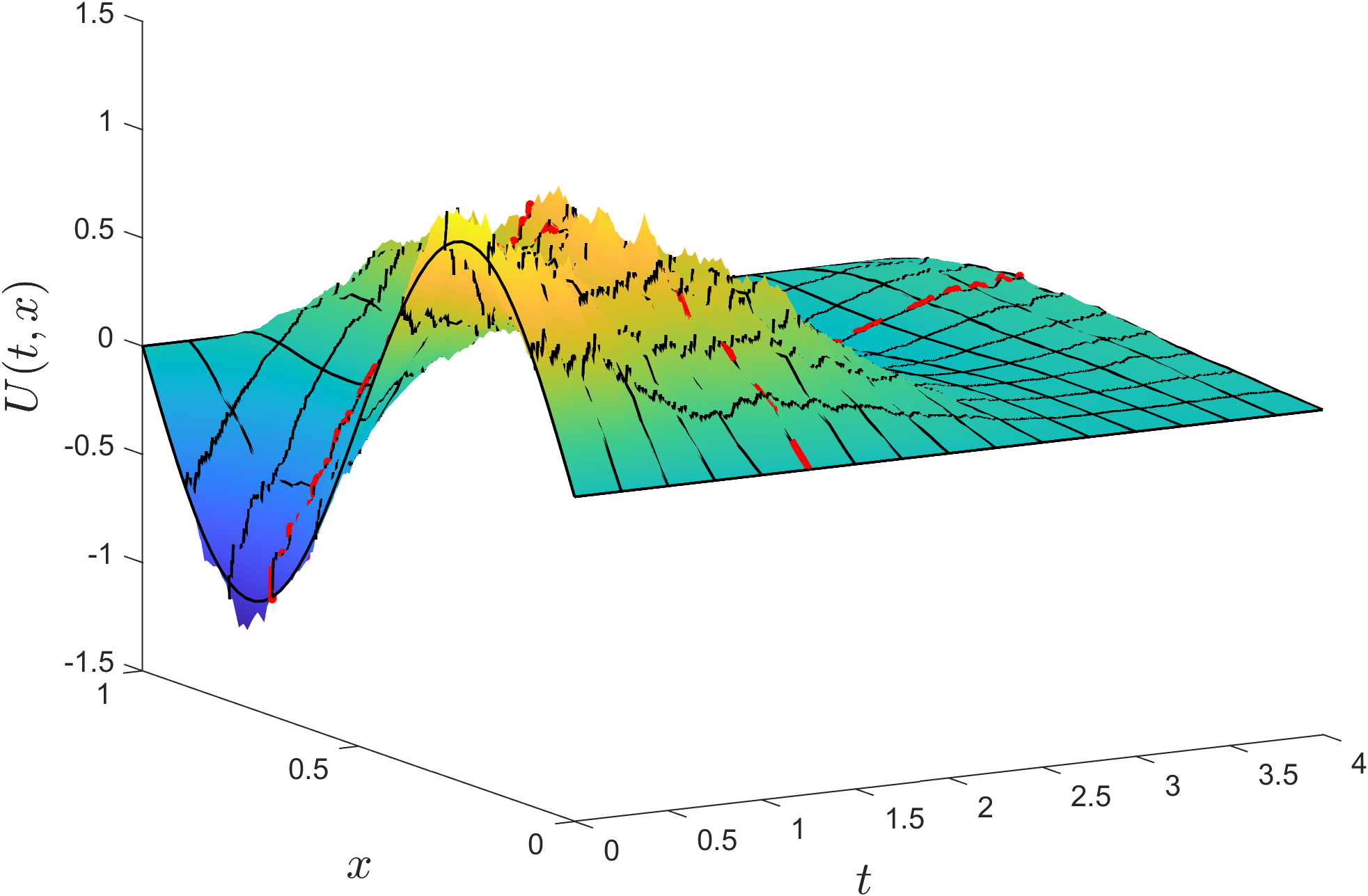}}\hfill
\subfloat[sample solution, correlated noise]{\includegraphics[width=0.3\linewidth,trim=2 0 0 20,clip]{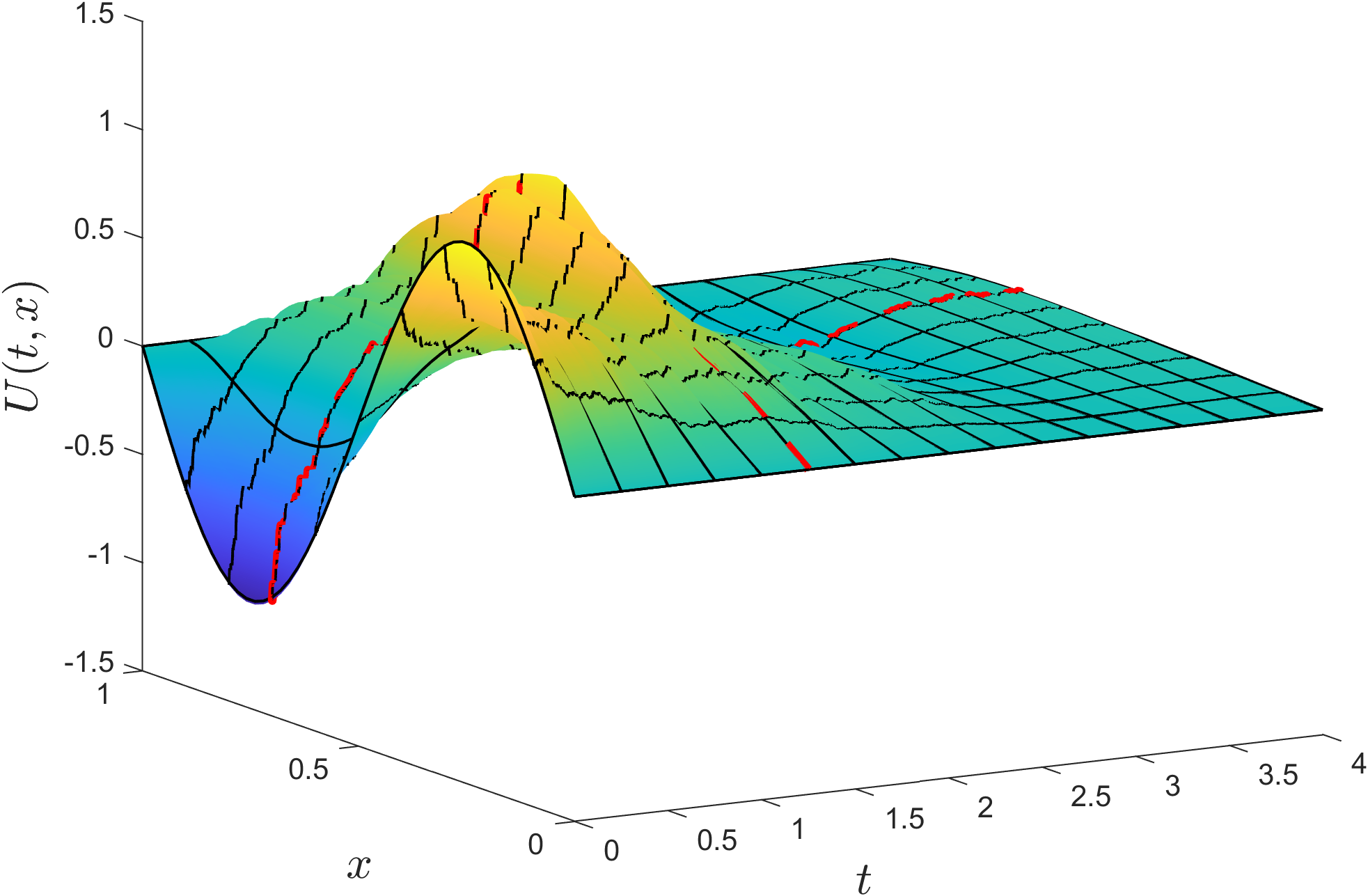}}
\par\bigskip
\subfloat[heat map of solution, $c=0$]{\includegraphics[width=0.3\linewidth,trim=2 0 0 0,clip]{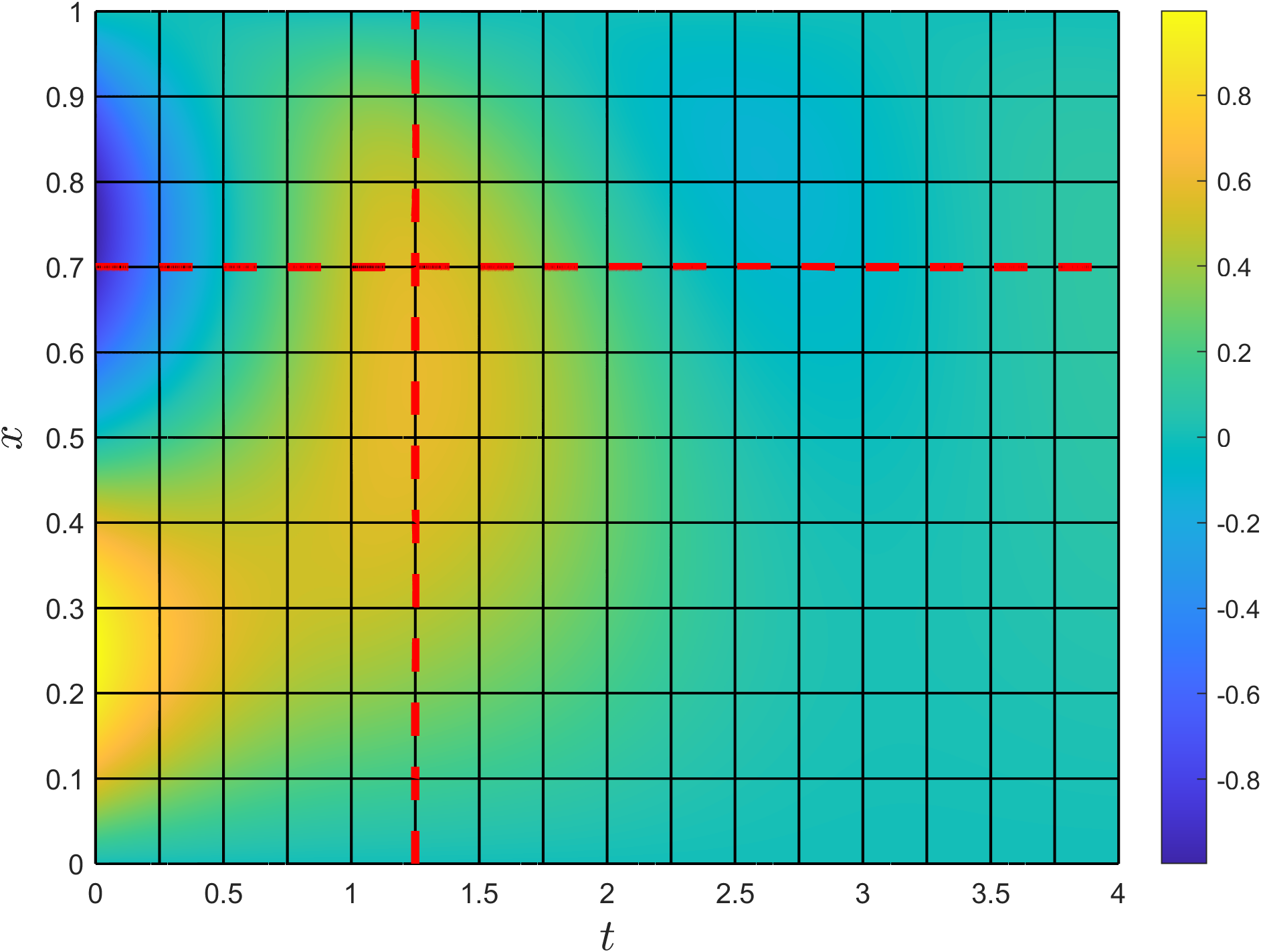}}\hfill
\subfloat[heat map of solution, uncorrelated noise]{\includegraphics[width=0.3\linewidth,trim=2 0 0 0,clip]{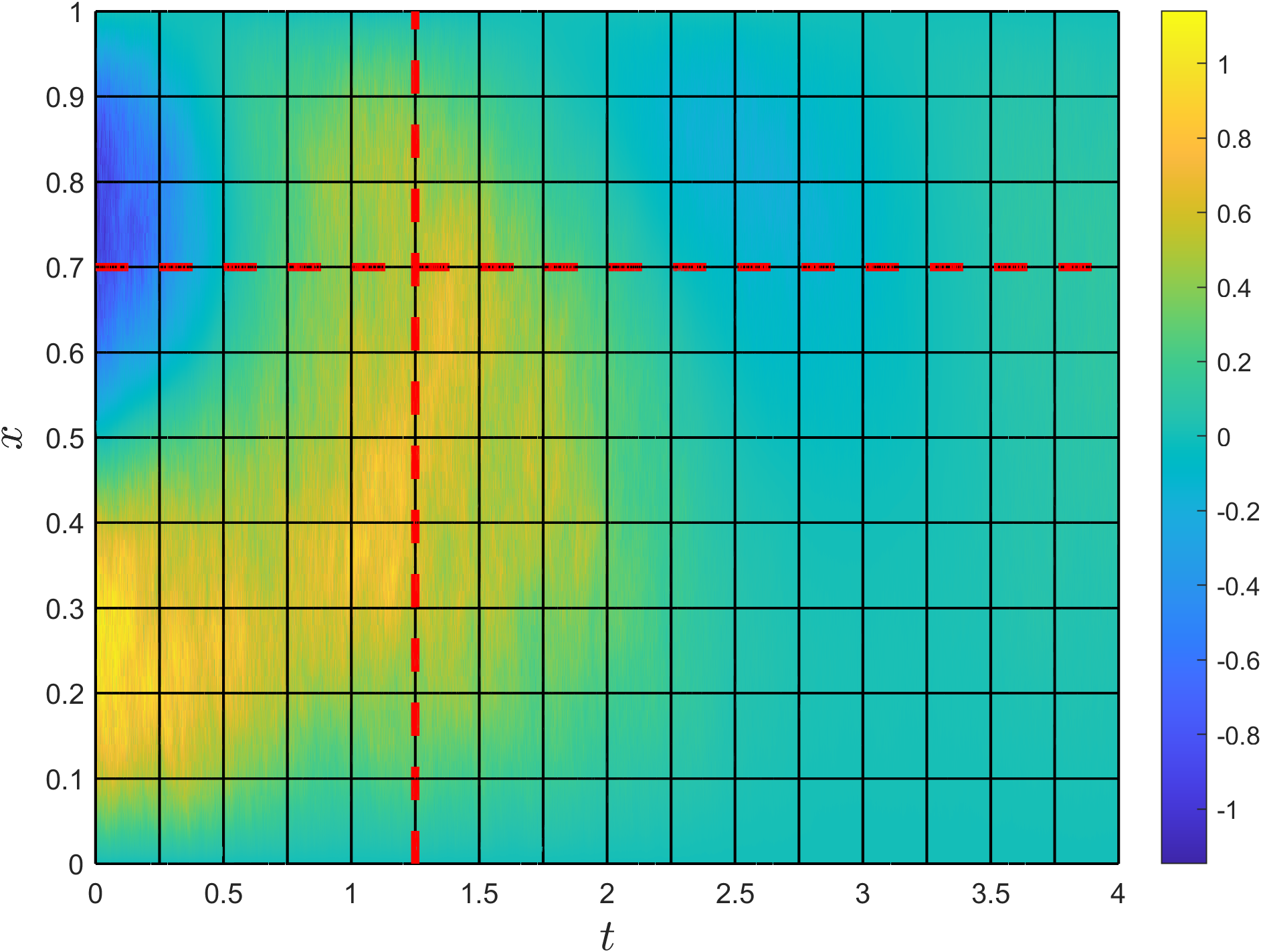}}\hfill
\subfloat[heat map of solution, correlated noise]{\includegraphics[width=0.3\linewidth,trim=2 0 0 0,clip]{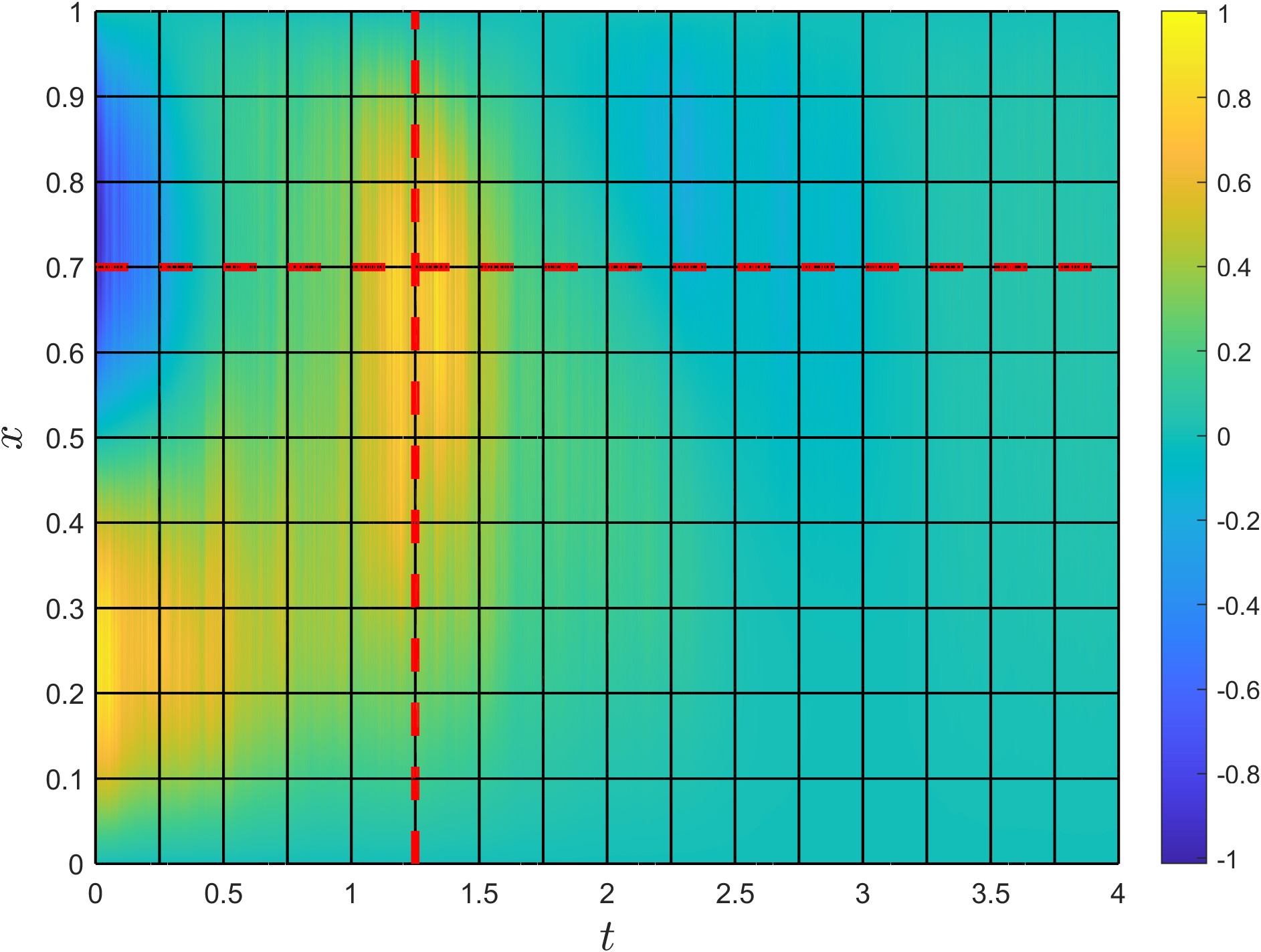}}
\par\bigskip
\subfloat[cross section ($x=0.7$), $c=0$, comparing cooling with ($\tau=1$) and without ($\tau=0$) delay]{\includegraphics[width=0.3\linewidth,trim=2 0 0 0,clip]{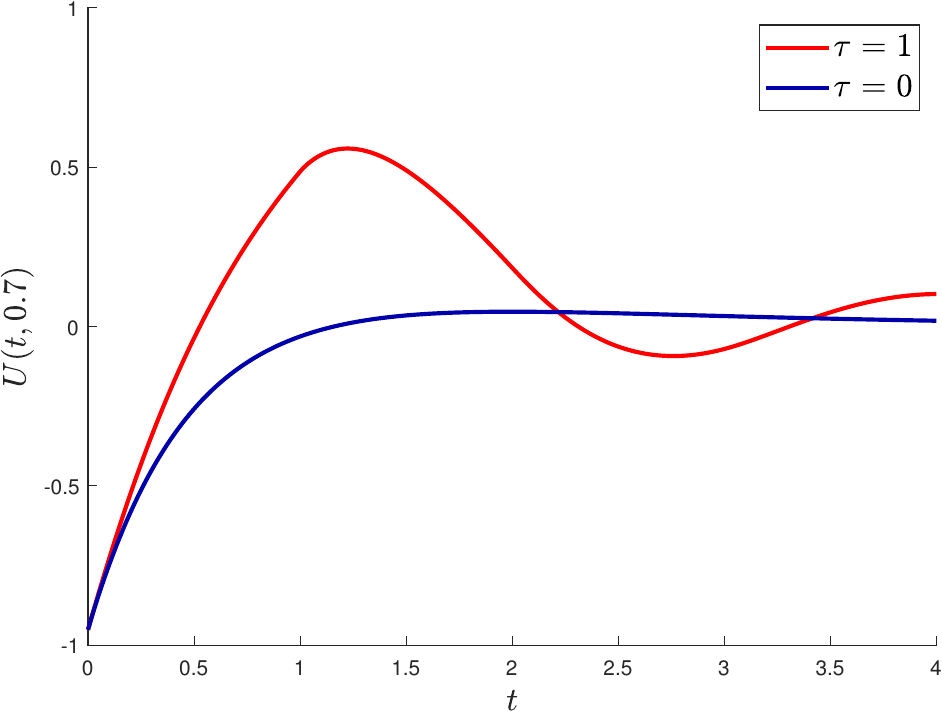}}\hfill
\subfloat[cross section ($x=0.7$), uncorrelated noise, comparing cooling with and without delay]{\includegraphics[width=0.3\linewidth,trim=2 0 0 0,clip]{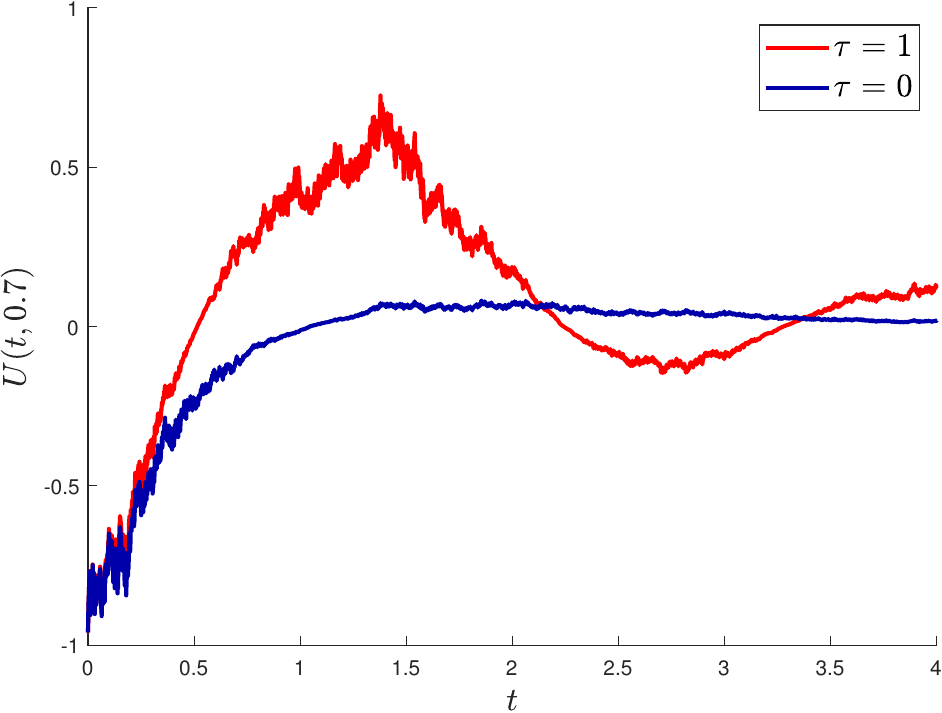}}\hfill
\subfloat[cross section ($x=0.7$), correlated noise, comparing cooling with and without delay]{\includegraphics[width=0.3\linewidth,trim=2 0 0 0,clip]{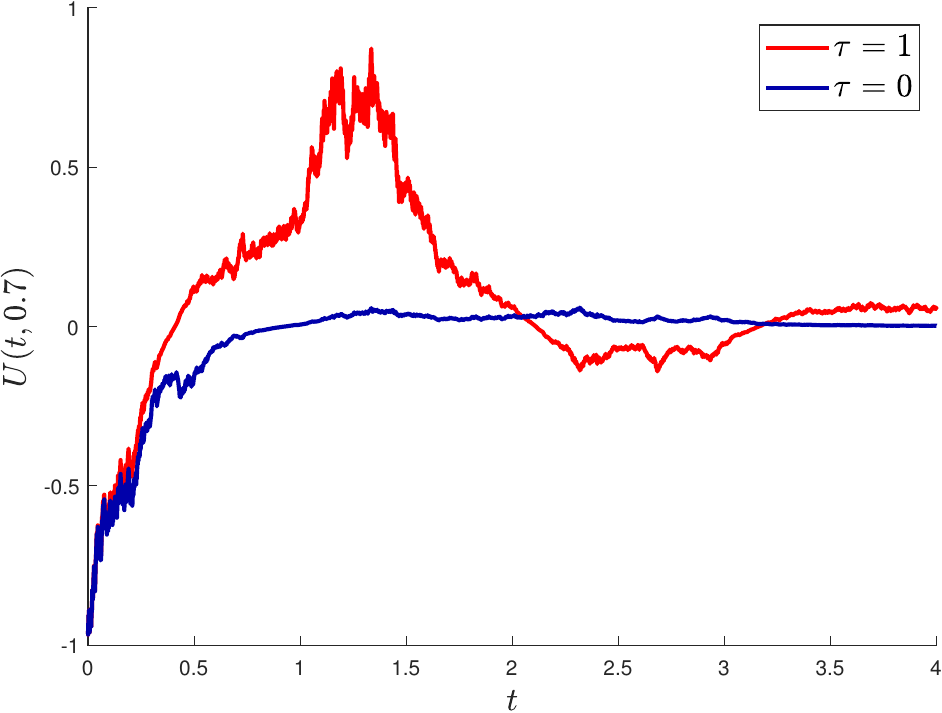}}
\par\bigskip
\subfloat[cross section ($t=1.25$), $c=0$]{\includegraphics[width=0.3\linewidth,trim=2 0 0 0,clip]{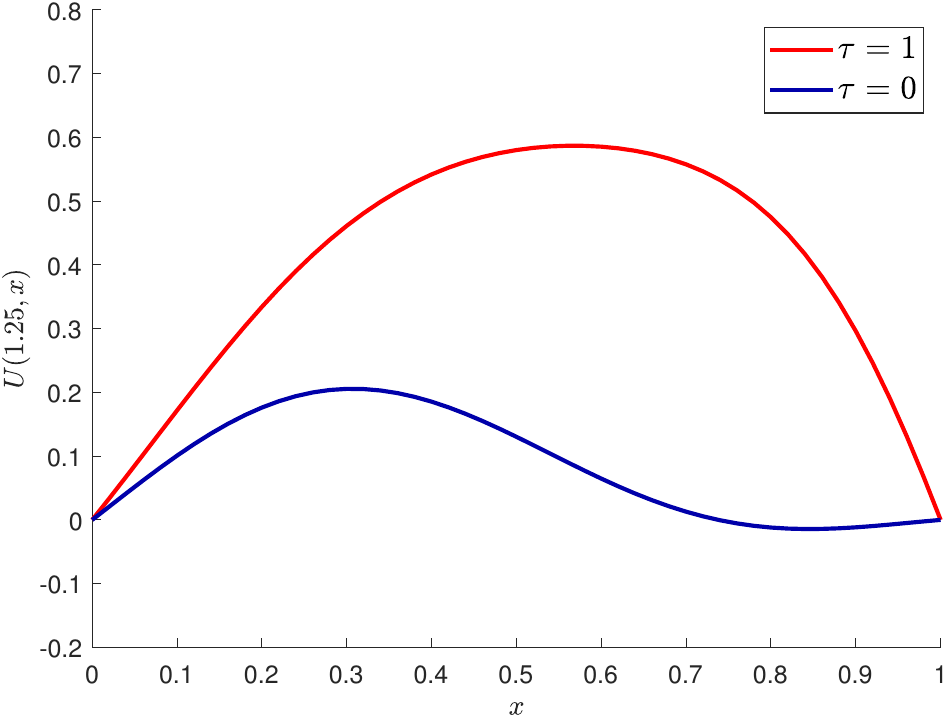}}\hfill
\subfloat[cross section ($t=1.25$), uncorrelated noise]{\includegraphics[width=0.3\linewidth,trim=2 0 0 0,clip]{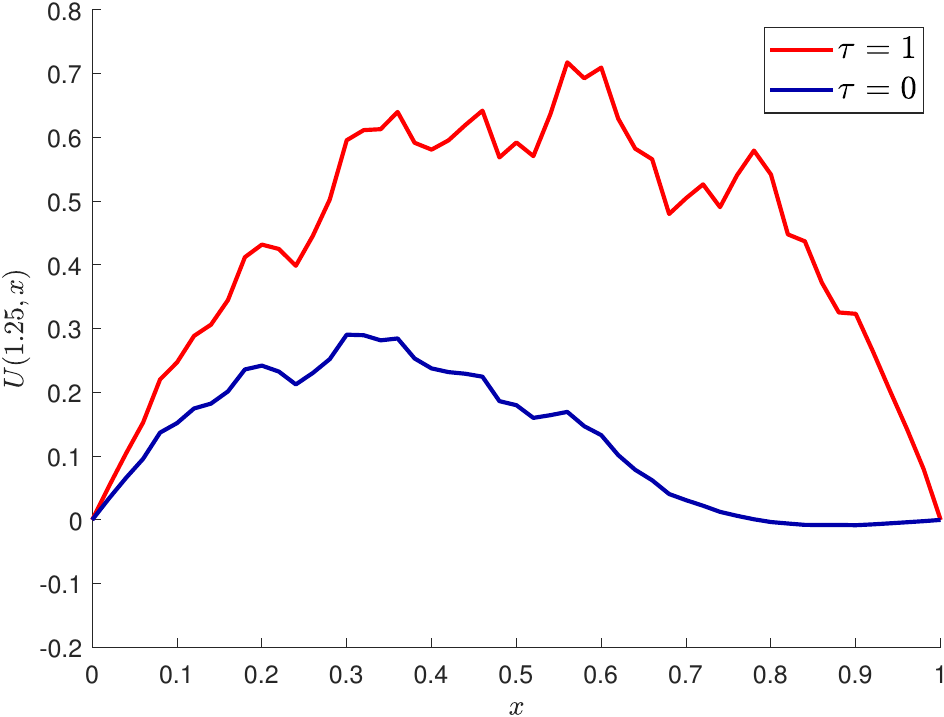}}\hfill
\subfloat[cross section ($t=1.25$), correlated noise]{\includegraphics[width=0.3\linewidth,trim=2 0 0 0,clip]{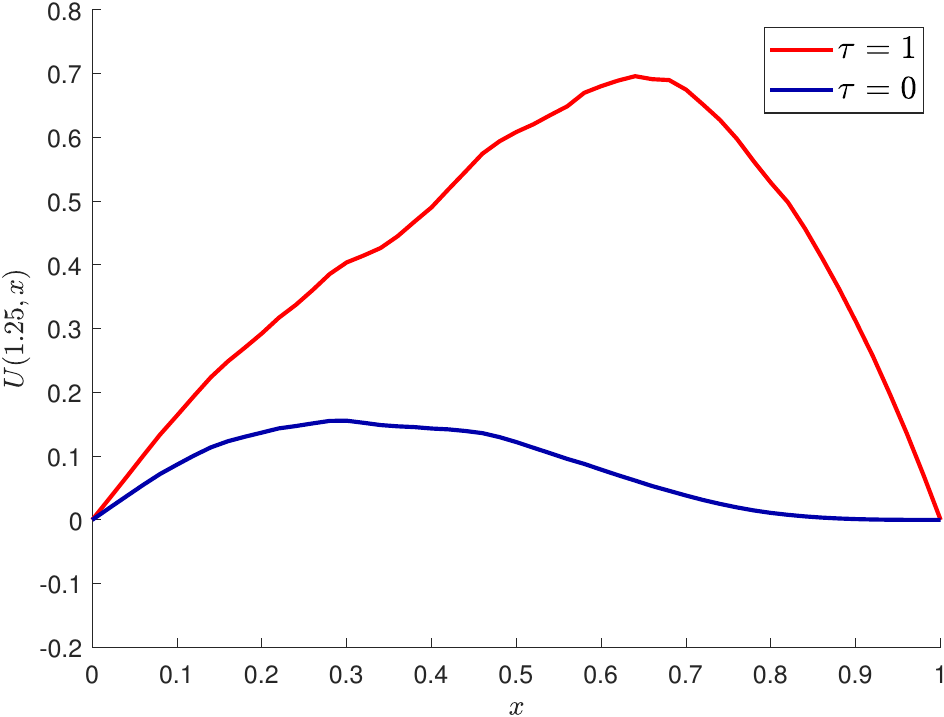}}
\par\bigskip
\caption{Numerical solutions of the stochastic heat equation with delayed cooling, for the cases of no noise, uncorrelated noise, and also correlated noise, including cross sections comparing each solution with the non-delayed solution.}
\label{C}
\end{figure}

Computation times of matrix exponentials becomes significant as the dimension increases, but research is underway to improve these simulation times. Future applications of the Magnus schemes (to SPDDEs or otherwise) may incorporate techniques to reduce computation times for matrix exponentials, such as those given by Moler and Van Loan \cite{MolerVanloan1978,MolerVanLoan2003}. For example, such techniques include Krylov methods, as described by Saad \cite{Saad1992}. Future researchers simulating our SPDDE may exploit the tridiagonal structure \cref{SPDDEA0} of $A_0$, which also becomes symmetric if we omit the $d^\mathrm{th}$ row and column. Software for simulating the matrix exponential of such a large, sparse matrix is given by Sidje \cite{Sidje1998}, as well as by Burrage, Hegland, Macnamara, and Sidje \cite{Burrage2006}.

In addition to investigating numerical performance, we also use the Magnus schemes to simulate sample solutions of \cref{Equation3}, which we now demonstrate. In \Cref{C}, we show simulation of the SPDDE solution, for the deterministic case with no noise (first column), simulation with uncorrelated noise (second column), and simulation using correlated noise (third column). For each of these cases, we show the numerical SPDDE solution (first row), as well as a heat map of the same solution (second row), where we indicate the cross sections $x=0.7$ and $t=1.25$ with dashed lines. At the cross sections at $x=0.7$ (third row) and $t=1.25$ (fourth row), we show both the SPDDE solution (with delay $\tau=1$) superimposed with the solution of the same equation but without delay ($\tau=0$). In the deterministic case (the first column), we are able to view the effects of time-delayed cooling (of the heat equation), compared with instantaneous cooling. For example, from \Cref{C} (g), we see that instantaneous cooling leads to monotonically decaying (in time) solution (along the $x=0.7$ cross section) as it approaches equilibrium, while if the cooling is delayed then the solution both approaches the equilibrium temperature and is also cooled beyond this point. Our selection of parameters cause the solution still to converge to equilibrium, but with sufficiently large parameters (such as $r_a$ and $r_b$), it is possible for the delayed solution to diverge. Although we do not discuss this topic further in this paper, the study of stability for these solutions may form future research. In the stochastic settings of this example, we remark that our delay is only present in the drift term of \cref{Equation3}, so the stochasticity of the solutions is only enhanced by the immediate value ($cU(t,x)$, in \cref{Equation3}), but if we include delay dependence in the volatility terms then this also alters the solution trajectories, and may also affect stability. In our setting, from comparing the affects of uncorrelated and correlated noise, the solutions converge in time similarly to the deterministic case (third row). At a fixed time (fourth row), uncorrelated noise yields clearly uncorrelated values at distinct spatial points, while spatial correlation in the noise causes neighboring values to cluster more tightly.

As a final comment on this SPDDE application, we remark that our investigation is only a beginning into the study of SPDDE simulations, and future work on simulating SPDDE solutions may compare computation times between the Taylor and Magnus schemes. The Magnus methods provide a simulation technique that is applicable for fine spatial discretization, but the tradeoff is the computation time required to simulate approximations of large matrix exponentials. That is, when simulating the SPDDE of \hyperref[Example4]{Example~\ref*{Example4}} with small $\Delta x$, the EM scheme requires significantly smaller temporal step size, due to \cref{StabilityCondition}, but for the MEM scheme, a small $\Delta x$ leads to larger matrix coefficients. In order to implement the Magnus schemes, we must simulate the exponentials of these large matrices.

%
%

\end{example}

\section{Conclusions}\label{Section5}


In this paper, we introduced Magnus numerical schemes to simulate SDDE solutions. We constructed these schemes by extending existing SDE Magnus techniques to include constant time delays. A key motivation for this extension was the simulation of SPDDEs such as the stochastic heat equation with delayed cooling \cref{Equation3}. This contrasts with the traditional Euler–Maruyama scheme,
which is unsuitable for simulating \cref{Equation3} due to the restrictive stability condition \cref{StabilityCondition}.


We presented the Magnus schemes in \cref{Section2}, alongside the regular Taylor (EM and Milstein) schemes. We produced our Magnus schemes in \cref{Section3}, by extending the construction of SDE Magnus schemes by Yang et al.\ \cite{BurrageBurrage2021} to the case of SDDEs. This involved decomposing the solution $X$ of the semilinear SDDE \cref{Equation1} into the product $X=X_HX_R$, where $X_H$ solves the homogeneous linear SDE \cref{dXHd}, and $X_R$ satisfies the inhomogeneous integral equation \cref{XRd}. We then applied Magnus expansions to approximate $X_H$, and stochastic Taylor expansions to approximate $X_R$. In \cref{Section4}, we demonstrated that the proposed schemes achieve their theoretical orders of convergence across a range of examples, including linear and semilinear SDDEs, along with an SPDDE model \cref{Equation3}.


Our Magnus schemes find purpose in specific situations, such as the SPDDE problem in \hyperref[Example4]{Example~\ref*{Example4}}. In low-dimensional problems (\hyperref[Example1]{Examples~\ref*{Example1}}, \ref{Example2}, \ref{Example3}), their accuracy is comparable to the Taylor schemes, but they incur greater computational cost due to the matrix exponential evaluations. Consequently, when both approaches are stable and applicable, Taylor methods are typically more efficient. However, as shown in \hyperref[Example4]{Example~\ref*{Example4}}, the Magnus methods remain stable and accurate even when the Taylor methods fail, such as in simulations requiring large spatial dimension or coarse temporal discretization.


Future research directions include a more detailed comparison of computational costs between Magnus and Taylor schemes, particularly as spatial dimension increases. For example, in \hyperref[Example4]{Example~\ref*{Example4}}, when the stability condition \cref{StabilityCondition} is satisfied using a sufficiently small time step $h$, it would be valuable to determine whether Magnus schemes remain computationally feasible. For larger $h$, the Magnus methods offer a practical means to simulate SPDDEs with fine spatial discretization. Although we considered only one SPDDE model here, the approach could be extended to more general stochastic parabolic PDEs with delays. The computation times of the Magnus methods for SPDDEs will also improve as methods for simulating matrix exponentials improve, whether it be from development of analytic techniques or improvements in computer processor speeds.

\section*{Acknowledgments}
We thank the anonymous reviewers for their constructive comments and helpful suggestions, which improved the quality of this manuscript.

\appendix


\section{Calculation results}\label{AppendixProofsforConvergenceAnalysis}

This appendix states technical lemmas used throughout the convergence analysis. Full proofs are provided in the supplementary material; see \cref{ProofofNewLemma2,ProofofLemmaPhiInverseLemma,ProofofNewLemma1,ProofofNewLemma3}. These results support the error expansion and convergence arguments for the MEM and MM schemes introduced in Section~\ref{Section3}.

\begin{lemma}\label{LemmaPhiInverse}
Suppose $\sigma_b\leqslant t_l<t_{l+1}\leqslant\sigma_{b+1}$. The SDE
\begin{align}
\mathrm{d}Z_{t_l}(t)&=\bigg(-A_0+\sum_{i=1}^mA_i^2\bigg)Z_{t_l}(t)\,\mathrm{d}t+\sum_{j=1}^m(-A_j)Z_{t_l}(t)\,\mathrm{d}W_j(t),\quad t\in[t_l,t_{l+1}],\label{PhiInverseSDE}
\end{align}
with $Z_{t_l}(t_l)=I_d$, has solution $Z_{t_l}(t)=\exp(-\Omega(t_l,t))$.
\end{lemma}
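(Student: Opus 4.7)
The plan is to verify the claim by applying It\^o's product rule to the invariant $Z_{t_l}(t)X_H(t) = I_d$, where $X_H(t) = \exp(\Omega(t_l,t))$ is the Magnus representation of the solution of the homogeneous matrix SDE \cref{dXHd} on $[t_l,t_{l+1}]$ with initial data $X_H(t_l)=I_d$. First I would observe that, since $-\Omega(t_l,t)$ commutes with $\Omega(t_l,t)$, the elementary identity $\exp(-\Omega)\exp(\Omega)=\exp(0)=I_d$ shows that $\exp(-\Omega(t_l,t))$ is the matrix inverse of $X_H(t)$, and the initial condition $Z_{t_l}(t_l)=I_d$ follows from $\Omega(t_l,t_l)=0$.

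Next I would postulate a generic It\^o expansion $dZ_{t_l} = \alpha\,dt + \sum_{j=1}^m \beta_j\,dW_j$ with adapted matrix-valued processes $\alpha$ and $\beta_j$ to be determined, and differentiate the invariant $Z_{t_l}X_H = I_d$ using the matrix-valued product rule
\[
0 = (dZ_{t_l})X_H + Z_{t_l}(dX_H) + d\langle Z_{t_l}, X_H\rangle,
\]
where the cross-variation evaluates to $d\langle Z_{t_l},X_H\rangle = \sum_{j=1}^m \beta_j A_j X_H\,dt$ via the matrix analogue of the It\^o product formula. Substituting $dX_H = A_0 X_H\,dt + \sum_j A_j X_H\,dW_j$ and matching the $dW_j$ and $dt$ coefficients to zero, then right-multiplying by $X_H^{-1} = Z_{t_l}$ to cancel the trailing $X_H$, yields $\beta_j = -Z_{t_l}A_j$ and $\alpha = Z_{t_l}(-A_0 + \sum_j A_j^2)$. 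Uniqueness for linear matrix SDEs with adapted, locally bounded coefficients then identifies this unique solution with $Z_{t_l}(t) = \exp(-\Omega(t_l,t))$.

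The main obstacle is purely bookkeeping rather than conceptual: one must carefully track the matrix multiplication order throughout the product rule and the cross-variation calculation, since $Z_{t_l}$ and the coefficient matrices $A_j$ do not commute in general. A subsidiary subtlety is that the claim implicitly uses commutativity of $\Omega$ with $-\Omega$ to identify $\exp(-\Omega)$ with $X_H^{-1}$; this is where the uniqueness of the matrix inverse does the heavy lifting, after which the SDE derivation is a direct computation.
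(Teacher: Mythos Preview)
Your It\^o product-rule computation is sound on its own terms, but it does not prove the lemma as stated. Differentiating the invariant $Z_{t_l}X_H=I_d$ (or equally $X_HZ_{t_l}=I_d$) against $\mathrm{d}X_H=A_0X_H\,\mathrm{d}t+\sum_jA_jX_H\,\mathrm{d}W_j$ forces the coefficients onto the \emph{right} of $Z_{t_l}$: you correctly obtain $\beta_j=-Z_{t_l}A_j$ and $\alpha=Z_{t_l}\bigl(-A_0+\sum_jA_j^2\bigr)$, i.e.\ the SDE
\[
\mathrm{d}Z_{t_l}=Z_{t_l}\,\underline{A}_0\,\mathrm{d}t+\sum_{j=1}^m Z_{t_l}\,\underline{A}_j\,\mathrm{d}W_j.
\]
The lemma, however, asserts the \emph{left}-multiplication form \cref{PhiInverseSDE}, with $\underline{A}_0Z_{t_l}$ and $(-A_j)Z_{t_l}$. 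When the $A_j$ do not commute these are genuinely different linear matrix SDEs with different solutions, so the issue you flag as ``purely bookkeeping'' is in fact the entire content of the claim: the inverse of a left-multiplication fundamental solution necessarily satisfies a right-multiplication equation, and no rearrangement of the product-rule identity can move the coefficients to the other side.

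The paper's proof takes a different route that avoids the inverse construction altogether. It applies the Magnus expansion \cref{Omega} \emph{directly} to the left-multiplication SDE \cref{PhiInverseSDE} with the modified coefficients $\underline{A}_0=-A_0+\sum_jA_j^2$ and $\underline{A}_j=-A_j$, and then argues term by term (first the linear part, then the commutator terms) that the resulting exponent collapses to $-\Omega(t_l,t)$. The identity $\exp(-\Omega)=X_H^{-1}$ is recorded elsewhere in the paper but plays no role in this argument.
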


\begin{lemma}\label{NewLemma1}
For each $i,j=1,\ldots,m$, there is a constant $c>0$ such that the following inequalities hold for any times $t_l,t_{l+1}\in[\sigma_b,\sigma_{b+1}]$ with $t_l < t_{l+1}$:
\begin{align}
\mathbb{E}\bigg[\abs{\int_{t_l}^{t_{l+1}}\int_{t_l}^sg_j(u,X(u),\phi_{\sigma_b}(u))\,\mathrm{d}u\,\mathrm{d}W_j(s)}^2\bigg]&\leqslant c\,h_l^2;\label{NewLemma1Bound5}\\
\mathbb{E}\bigg[\bigg|\int_{t_l}^{t_{l+1}}\int_{t_l}^sA_j\bigg([A_iX(t_l)+g_i(t_l,X(t_l),\phi_{\sigma_b}(t_l))]\hspace{1.5cm}&\label{NewLemma1Bound6}\\
-[A_iX(u)+g_i(u,X(u),\phi_{\sigma_b}(u))]\bigg)\,\mathrm{d}W_i(u)\,\mathrm{d}W_j(s)\bigg|^2\bigg]&\leqslant c\,h_l^2;\quad\textrm{and}\nonumber\\
\mathbb{E}\bigg[\abs{\int_{t_l}^{t_{l+1}}\int_{t_l}^sA_j[A_0X(u)+f(u,X(u),\phi_{\sigma_b}(u))]\,\mathrm{d}u\,\mathrm{d}W_j(s)}^2\bigg]&\leqslant c\,h_l^2,\label{NewLemma1Bound7}
\end{align}
where $h_l=t_{l+1}-t_l$.
\end{lemma}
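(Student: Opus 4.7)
The plan is to prove each of the three bounds by the same template: apply the It\^o isometry \cref{ItoIsometry} to peel off the outer $\mathrm{d}W_j(s)$ integration; apply either Cauchy--Schwarz (for an inner Lebesgue integral, as in \cref{NewLemma1Bound5} and \cref{NewLemma1Bound7}) or a second It\^o isometry (for the inner $\mathrm{d}W_i(u)$ integral in \cref{NewLemma1Bound6}) to pass the square inside; bound the resulting integrand via the linear-growth condition \cref{LinearGrowthBound}, the spatial Lipschitz condition \cref{LipschitzBound}, the $C^1$-in-time regularity of $f,g_j$, and the a priori moment bound $\sup_{t\in[-\tau,T]}\mathbb{E}[\abs{X(t)}^2]<\infty$ supplied by \cref{PLS}; and finally integrate in $u$ and $s$ to read off the desired power of $h_l$.

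For \cref{NewLemma1Bound5}, the It\^o isometry followed by Cauchy--Schwarz on the inner Lebesgue integral gives
\[\mathbb{E}\bigg[\bigg|\int_{t_l}^{t_{l+1}}\!\!\int_{t_l}^s g_j(u,X(u),\phi_{\sigma_b}(u))\,\mathrm{d}u\,\mathrm{d}W_j(s)\bigg|^2\bigg] \leqslant \int_{t_l}^{t_{l+1}}(s-t_l)\int_{t_l}^s \mathbb{E}[\abs{g_j(u,X(u),\phi_{\sigma_b}(u))}^2]\,\mathrm{d}u\,\mathrm{d}s.\]
The integrand is uniformly bounded by \cref{LinearGrowthBound} together with the moment bound on $X$, producing an $O(h_l^3)$ estimate, which is stronger than the required $O(h_l^2)$. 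Bound \cref{NewLemma1Bound7} follows in the same way, with $\mathbb{E}[\abs{A_j[A_0X(u)+f(u,X(u),\phi_{\sigma_b}(u))]}^2]$ in place of $\mathbb{E}[\abs{g_j}^2]$ and the analogous linear-growth bound on $f$.

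The substantive step is \cref{NewLemma1Bound6}. Two applications of the It\^o isometry reduce the left-hand side to $\int_{t_l}^{t_{l+1}}\!\int_{t_l}^s \mathbb{E}[\abs{A_j\Delta(t_l,u)}^2]\,\mathrm{d}u\,\mathrm{d}s$, where $\Delta(t_l,u)=[A_iX(t_l)+g_i(t_l,X(t_l),\phi_{\sigma_b}(t_l))]-[A_iX(u)+g_i(u,X(u),\phi_{\sigma_b}(u))]$. I would establish the mean-square continuity estimate $\mathbb{E}[\abs{\Delta(t_l,u)}^2]\leqslant C(u-t_l)$ and then integrate to get $O(h_l^3)$. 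The $A_i(X(u)-X(t_l))$ contribution is handled by writing $X(u)-X(t_l)$ via \cref{Equation1}, applying the It\^o isometry and Cauchy--Schwarz, and invoking \cref{PLS} with \cref{LinearGrowthBound} to obtain $\mathbb{E}[\abs{X(u)-X(t_l)}^2]\leqslant C(u-t_l)$. The $g_i$ difference is split into a time increment, controlled by the $C^1$-in-time regularity of $g_i$, and a spatial increment, controlled by \cref{LipschitzBound}; the delayed arguments $X(\cdot-\tau_k)$ inherit the same $O(\sqrt{u-t_l})$ modulus either from an earlier Bellman interval or from continuity of the history $\phi$.

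The principal obstacle is securing a mean-square modulus of continuity for $X$ whose constant $C$ is uniform in both $t_l$ and the Bellman index $b$, because the dynamics on $[\sigma_b,\sigma_{b+1}]$ depend on the full past through $\phi_{\sigma_b}$. I would resolve this by propagating the $L^2$-moment bound of \cref{PLS} through the delayed state recursively across Bellman intervals, so that $C$ depends only on $T$, $\tau$, the growth and Lipschitz constants of $f$ and the $g_j$, and $\sup_{t\in[-\tau,0]}\mathbb{E}[\abs{\phi(t)}^2]$.
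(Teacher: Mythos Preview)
Your proposal is correct and follows essentially the same route as the paper: the paper first records the auxiliary bounds $\mathbb{E}[|g_j|^2]\leqslant c$, $\mathbb{E}[|X(t)|^4]\leqslant c$, and $\mathbb{E}[|X(t_{l+1})-X(t_l)|^2]\leqslant ch_l$, and then applies exactly your template of It\^o isometry, Cauchy--Schwarz (or a second It\^o isometry for \cref{NewLemma1Bound6}), Fubini--Tonelli, and these moment estimates. The only differences are cosmetic---you retain the Cauchy--Schwarz factor $(s-t_l)$ to get $O(h_l^3)$ in \cref{NewLemma1Bound5} and \cref{NewLemma1Bound7} where the paper records only $O(h_l^2)$, and you are more explicit than the paper about the time-regularity of $g_i$ and the uniformity of constants across Bellman intervals (the paper simply invokes the spatial Lipschitz bound \cref{LipschitzBound} and \cref{PLS}).
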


\begin{lemma}\label{NewLemma2}
There exists a constant $c>0$ such that the integrals in \cref{Isum} satisfy
\begin{align}
I^{(1)}(t_l,t_{l+1})&=R_1+\sum_{j=1}^mg_j(t_l,X(t_l),\phi_{\sigma_b}(t_l))\,\Delta W_j\label{I1}\\
&\quad+\sum_{i=1}^m\sum_{j=1}^m\bigg(\nabla_xg_j(t_l,X(t_l),\phi_{\sigma_b}(t_l))\nonumber\\
&\quad\quad\hspace{1cm}\cdot[A_iX(t_l)+g_i(t_l,X(t_l),\phi_{\sigma_b}(t_l))]\bigg)\,I_{ij}(t_l,t_{l+1})\nonumber\\
&\quad+\sum_{k=1}^K\mathbb{I}(t_l\geqslant\tau_k)\sum_{i=1}^m\sum_{j=1}^m\bigg(\nabla_{x_{\tau_k}}g_j(t_l,X(t_l),\phi_{\sigma_b}(t_l))\nonumber\\
&\quad\quad\hspace{1cm}\cdot[A_iX(t_l^{\tau_k})+g_i(t_l^{\tau_l},X(t_l^{\tau_l}),\phi_{\sigma_b}(t_l^{\tau_l}))]\bigg)\,I_{ij}^{\tau_k}(t_l,t_{l+1}),\nonumber\\
I^{(2)}(t_l,t_{l+1})&=\sum_{j=1}^m\int_{t_l}^{t_{l+1}}\left(\int_{t_l}^s\underline{A}_0Z_{t_l}(u)\,\mathrm{d}u\right)g_j(s,X(s),\phi_{\sigma_b}(s))\,\mathrm{d}W_j(s)+R_2,\label{I2}\\
I^{(3)}(t_l,t_{l+1})&=-\sum_{i=1}^m\sum_{j=1}^mA_iZ(t_l)g_j(t_l,X(t_l),\phi_{\sigma_b}(t_l))\,I_{ij}(t_l,t_{l+1})+R_3,\label{I3}
\end{align}
with remainders $R_1,R_2,R_3$ satisfying the bounds $\mathbb{E}[\abs{R_1}^2]\leqslant c\,h_l^{3/2}$, $\mathbb{E}[\abs{R_2}^2]\leqslant c\,h_l^2,$ and $\mathbb{E}[\abs{R_3}^2]\leqslant c\,h_l^2$, where $h_l=(t_{l+1}-t_l)$.
\end{lemma}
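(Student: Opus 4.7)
The plan is to treat the three integrals $I^{(1)}, I^{(2)}, I^{(3)}$ separately, and in each case apply an It\^o--Taylor expansion followed by a freezing approximation, bounding the resulting remainder in $L^2$ using \cref{NewLemma1} and the It\^o isometry \cref{ItoIsometry}. Structurally, each step replaces an integrand involving $X(s)$, $\phi_{\sigma_b}(s)$, or $Z_{t_l}(s)$ by its value at the left endpoint $t_l$ and controls the error incurred.

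For $I^{(1)}$, I would apply It\^o's formula to the integrand $g_j(s, X(s), \phi_{\sigma_b}(s))$, using the SDE \cref{Equation1} for $\mathrm{d}X$, to obtain the expansion
\begin{align*}
g_j(s, X(s), \phi_{\sigma_b}(s)) &= g_j(t_l, X(t_l), \phi_{\sigma_b}(t_l)) + \sum_i \int_{t_l}^s \nabla_x g_j\cdot(A_i X(u) + g_i(u,\cdot))\,\mathrm{d}W_i(u)\\
&\quad + \sum_k \mathbb{I}(t_l\geqslant\tau_k)\sum_i \int_{t_l}^s \nabla_{x_{\tau_k}} g_j\cdot(A_i X(u-\tau_k) + g_i(u-\tau_k,\cdot))\,\mathrm{d}W_i(u-\tau_k) + R_1',
\end{align*}
where $R_1'$ gathers the Lebesgue drift and second-order Hessian contributions. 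Substituting into $\sum_j\int_{t_l}^{t_{l+1}} g_j\,\mathrm{d}W_j(s)$, the leading term gives $\sum_j g_j(t_l,\cdot)\,\Delta W_j$, while freezing the inner integrands at $t_l$ in each remaining double Wiener integral yields exactly the $I_{ij}$ and $I_{ij}^{\tau_k}$ sums appearing in \cref{I1}. The residual $R_1$ then splits into (a) drift--Wiener double integrals, controlled by \cref{NewLemma1Bound5,NewLemma1Bound7}; (b) freezing errors of precisely the form in \cref{NewLemma1Bound6}; and (c) Hessian contributions, bounded by the It\^o isometry together with the linear-growth bound \cref{LinearGrowthBound}. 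Each piece is of mean-square order $h_l^2$, so $\mathbb{E}[\abs{R_1}^2] \leqslant c h_l^{3/2}$.

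For $I^{(2)}$, the right-hand side of \cref{I2} coincides literally with $I^{(2)}$, so the identity holds with $R_2 = 0$ and the stated bound is immediate; equivalently, the It\^o isometry gives $\mathbb{E}[\abs{I^{(2)}}^2] \leqslant c\int_{t_l}^{t_{l+1}}(s-t_l)^2\,\mathrm{d}s \leqslant c h_l^3 \leqslant c h_l^2$, using $L^2$ boundedness of $Z_{t_l}$ (from its geometric-Brownian-motion form in \cref{LemmaPhiInverse}) and the linear growth of $g_j$. For $I^{(3)}$, I would freeze $Z_{t_l}(u)$ at $Z_{t_l}(t_l) = I_d$ inside the inner It\^o integral and freeze $g_j(s,\cdot)$ at $g_j(t_l,\cdot)$ outside, producing $-\sum_{i,j} A_i g_j(t_l,\cdot)\int_{t_l}^{t_{l+1}}[W_i(s)-W_i(t_l)]\,\mathrm{d}W_j(s) = -\sum_{i,j} A_i g_j(t_l,\cdot)\,I_{ij}(t_l,t_{l+1})$, which matches the main term in \cref{I3}. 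The remainder $R_3$ gathers (i) the error from replacing $Z_{t_l}(u)$ by $I_d$, which via $\mathrm{d}Z_{t_l}$ from \cref{PhiInverseSDE} unfolds into iterated integrals bounded by \cref{NewLemma1Bound6,NewLemma1Bound7}, and (ii) the $g_j$-freezing error, which after one further It\^o expansion yields triple iterated integrals controlled by the It\^o isometry and the Lipschitz and linear-growth properties of $g_j$; both contributions are of mean-square order $h_l^2$.

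The main obstacle is the bookkeeping in the $I^{(1)}$ expansion, particularly the delayed Wiener terms. Under the step-size restriction $h_l < \tau_{\min}$ assumed in \cref{ItoTaylorKuchlerPlaten}, the processes $W_i(\cdot - \tau_k)$ and $W_j$ are independent on $[t_l, t_{l+1}]$, which is what permits a clean application of the It\^o isometry to the delayed iterated integrals $I_{ij}^{\tau_k}$ and their associated freezing errors. Once this independence is in place, the remaining estimates are routine: each term reduces to an instance of the It\^o isometry combined with one of the three bounds supplied by \cref{NewLemma1}.
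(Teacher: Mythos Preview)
Your treatment of $I^{(2)}$ and $I^{(3)}$ matches the paper's almost exactly: for $I^{(2)}$ the identity is indeed tautological (and the paper then bounds the term itself via H\"older and the fourth moment of $Z_{t_l}$), while for $I^{(3)}$ the paper likewise adds and subtracts $g_j(t_l,\cdot)$, then expands $Z_{t_l}(u)$ about $I_d$ using the SDE in \cref{LemmaPhiInverse}, exactly as you describe.

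The genuine difference is in $I^{(1)}$. You expand $g_j(s,X(s),\phi_{\sigma_b}(s))$ directly via an It\^o formula for a function of the current and delayed states. The paper instead takes an indirect route: it writes
\[
I^{(1)}(t_l,t_{l+1})=\bigl[X(t_{l+1})-X(t_l)\bigr]-\int_{t_l}^{t_{l+1}}[A_0X(s)+f(s)]\,\mathrm{d}s-\sum_{j=1}^m\int_{t_l}^{t_{l+1}}A_jX(s)\,\mathrm{d}W_j(s),
\]
applies the SDDE It\^o--Taylor expansion \cref{ItoTaylorKuchlerPlaten} to $X(t_{l+1})-X(t_l)$, and then shows that the two subtracted integrals cancel (up to small remainders $F$ and $G$) against the corresponding pieces of the expansion. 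The advantage of the paper's route is that it recycles \cref{ItoTaylorKuchlerPlaten} as a black box for the solution $X$, so the delayed $I_{ij}^{\tau_k}$ terms and the $\mathbb{I}(t_l\geqslant\tau_k)$ indicators come for free; the price is the extra bookkeeping in bounding $F$ and $G$. Your direct expansion is cleaner conceptually but implicitly re-derives the SDDE It\^o formula for $g_j\circ X$ rather than invoking the already-stated theorem for $X$. Both arguments are correct and rely on the same estimates from \cref{NewLemma1}.
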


\begin{lemma}\label{NewLemma3}
For $\sigma_b\leqslant t_n<t_{n+1}\leqslant\sigma_{b+1}$, the conditional expectations of $X_H$, $\exp\left(\Omega^{[1]}\right)$, and $\exp\left(\Omega^{[2]}\right)$, at $t_{n+1}$, given $X_H(t_n)$, satisfy
\begin{align}
&\mathbb{E}[X_H(t_{n+1})|X_H(t_n)]=X_H(t_n)\exp(A_0\,h_n),\label{EXHconditional}\\
&\mathbb{E}\left[\exp\left(\Omega^{[k]}(t_n,t_{n+1})\right)\!\mid\!\exp\left(\Omega^{[k]}(t_n,t_n)\right)\!=\!I_d\right]=I_d+A_0\,h_n+\frac{\hat{A}_0^2\,h_n^2}{2}+R_k,\label{EOmega1conditional}
\end{align}
for both $k=1,2$, where $\hat{A}_0=A_0-\frac{1}{2}\sum_{j=1}^mA_j^2$, where each $R_k$ denotes a (constant) matrix multiple of $h_n^2$ or higher-order terms.
\end{lemma}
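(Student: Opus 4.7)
The plan is to treat the two identities separately: the first by exploiting the multiplicative (flow) structure of the homogeneous matrix SDE \cref{dXHd}, and the second by expanding the matrix exponential as a power series and evaluating expectations term by term.

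For \cref{EXHconditional}, let $\Phi(t_n,t)$ denote the solution of \cref{dXHd} on $[t_n,t]$ with $\Phi(t_n,t_n)=I_d$. Uniqueness of strong solutions gives $X_H(t_{n+1})=X_H(t_n)\,\Phi(t_n,t_{n+1})$, and because $\Phi(t_n,t_{n+1})$ depends only on the Wiener increments on $[t_n,t_{n+1}]$, it is independent of $X_H(t_n)$. Therefore $\mathbb{E}[X_H(t_{n+1})\mid X_H(t_n)]=X_H(t_n)\,\mathbb{E}[\Phi(t_n,t_{n+1})]$. Writing $\Phi$ in integral form and using that each It\^o integral $\int_{t_n}^t A_j\Phi(t_n,s)\,\mathrm{d}W_j(s)$ is a zero-mean martingale (by \cref{FubiniTonelliIto}), the deterministic matrix $M(t)=\mathbb{E}[\Phi(t_n,t)]$ satisfies $M'(t)=A_0M(t)$ with $M(t_n)=I_d$, whence $M(t_{n+1})=\exp(A_0h_n)$ and the claimed identity follows.

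For \cref{EOmega1conditional}, first observe that $\Omega^{[k]}(t_n,t_n)=0$, so the conditioning is automatic and the target is the unconditional expectation of $\exp(\Omega^{[k]}(t_n,t_{n+1}))=\sum_{r\geq 0}\tfrac{1}{r!}(\Omega^{[k]})^r$. I would compute the first two moments explicitly. The first is $\mathbb{E}[\Omega^{[k]}]=\hat{A}_0 h_n$, because every Wiener increment $\Delta W_j$ and every iterated integral ($I_{ij}$, $I_{ji}$, $I_{j0}-I_{0j}$) in $\Omega^{[k]}$ is a zero-mean martingale increment. The second moment expands using $\mathbb{E}[\Delta W_i\Delta W_j]=\delta_{ij}h_n$ to give $\mathbb{E}[(\Omega^{[k]})^2]=\hat{A}_0^2 h_n^2+\sum_{j=1}^m A_j^2\,h_n+O(h_n^2)$, where for $k=2$ the L\'evy-area correction $L=\tfrac{1}{2}\sum_{i<j}[A_i,A_j](I_{ji}-I_{ij})$ contributes only $\mathbb{E}[|L|^2]=O(h_n^2)$ and its cross terms with $\Omega^{[1]}$ vanish, as follows by a case analysis of $\mathbb{E}[\Delta W_k(I_{ji}-I_{ij})]$ using the It\^o isometry and the mutual independence of the driving Brownian motions. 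Substituting these two moments and using the identity $\hat{A}_0+\tfrac{1}{2}\sum_j A_j^2=A_0$ yields $\mathbb{E}[\exp(\Omega^{[k]})]=I_d+A_0 h_n+\tfrac{1}{2}\hat{A}_0^2 h_n^2+R_k$, provided the tail $\sum_{r\geq 3}\tfrac{1}{r!}\mathbb{E}[(\Omega^{[k]})^r]$ is itself $O(h_n^2)$.

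The main obstacle is controlling that tail. I would expand each power into monomials in the elementary blocks $\{h_n,\Delta W_j,I_{ij},I_{0j},I_{j0}\}$ and apply the standard moment bounds $\mathbb{E}[|\Delta W_j|^p]=O(h_n^{p/2})$, $\mathbb{E}[|I_{ij}|^p]=O(h_n^p)$, and $\mathbb{E}[|I_{j0}|^p]=\mathbb{E}[|I_{0j}|^p]=O(h_n^{3p/2})$, combined with independence and a parity argument that discards monomials containing an odd number of any single $\Delta W_j$. This forces every surviving term from $(\Omega^{[k]})^r$ with $r\geq 3$ to be at least $O(h_n^2)$ in expectation. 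The most delicate step is verifying that no $O(h_n^{3/2})$ residue survives in $\mathbb{E}[(\Omega^{[2]})^2]$ or in the cubic term, which reduces to the orthogonality identities for Wiener increments against L\'evy areas already listed above; once these are established, the remaining contributions collapse into $R_k$ and the lemma follows.
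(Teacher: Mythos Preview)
Your proposal is correct and follows essentially the same route as the paper: for \cref{EXHconditional} the paper also reduces to the linear ODE $M'(t)=A_0M(t)$ for the expectation (directly from the integral form of \cref{dXHd}), and for \cref{EOmega1conditional} it likewise expands the exponential as a series (via the Multinomial Theorem) and evaluates the Gaussian moments of the $\Delta W_j$ and the iterated integrals term by term. Your handling of the tail $r\geq 3$ and of the L\'evy-area cross terms is spelled out a bit more carefully than in the paper, but the underlying argument is identical.
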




\section{Karhunen--Lo\`eve results}\label{AppendixSPDE}

This section states the Karhunen--Lo\`eve Theorem applied to decompose a $Q$-Wiener process as a countable combination of independent Wiener processes. These results are applied extensively in \hyperref[Example4]{Example~\ref*{Example4}}. The results \cref{QWienerProcess}, \cref{QKLTheorem}, and \cref{ItoIntegralofQWienerProcess}, below, are respectively from pages 436, 437, and 445, of the text by Lord, Powell, and Shardlow \cite{LordPowellShardlow2014}, restricted to the one-dimensional spatial domain $[0,1]$.

\begin{definition}\label{QWienerProcess}A stochastic field $W^c=(W^c(t,x))_{(t,x)\in[0,T]\times[0,1]}$ is a \emph{$Q$-Wiener process} if:
\begin{enumerate}
  \item For each $x\in[0,1]$, $W^c(0,x)=0$, almost surely.
  \item For each $x\in[0,1]$, $W^c(t,x)$ is continuous with $t$, almost surely.
  \item $W$ is an adapted process and, for each $x\in[0,1]$ and any $0\leqslant s<t\leqslant u<v$, $W^c(t,x)-W^c(s,x)$ and $W^c(v,x)-W^c(u,x)$ are independent, and
  \item There exists a covariance function $Q:[0,1]\times[0,1]\to[0,\infty)$ such that for all $0\leqslant s<t$ and $x\in[0,1]$, $W^c(t,x)-W^c(s,x)\sim\mathcal{N}(0,(t-s)Q(x,x))$.
\end{enumerate}
\end{definition}

\begin{theorem}\label{QKLTheorem}Let $Q:[0,1]\times[0,1]\to[0,\infty)$ be a (spatial) covariance function and let $W^c$ be a $Q$-Wiener process, where the eigenvalues and corresponding eigenfunctions of $Q$ are $(\lambda_j,\phi_j)_{j\in\mathbb{N}}$, so that for each $j\in\mathbb{N}$,
\begin{align*}
\int_0^1&Q(x,y)\,\phi_j(y)\,\mathrm{d}y=\lambda_j\phi_j(x),\quad x\in[0,1],\\
&Q(x,y)=\sum_{j=1}^\infty\lambda_j\phi_j(x)\phi_j(y),\quad(x,y)\in[0,1]\times[0,1].
\end{align*}
Almost surely, the process $W^c$ satisfies
\begin{equation}W^c(t,x)=\sum_{j=1}^\infty\sqrt{\lambda_j}\phi_j(x)W_j(t),\quad(t,x)\in[0,\infty)\times[0,1],\label{QWienerFullExpansion}\end{equation}
where $W_1,W_2,\ldots$ are independent, adapted Wiener processes.
\end{theorem}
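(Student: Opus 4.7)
The plan is to recover the orthogonal Wiener processes $W_j$ by projecting $W^c(t,\cdot)$ against the eigenfunctions of $Q$ and then to reconstruct $W^c$ from these projections. First I would invoke Mercer's theorem: since $Q$ is a symmetric, continuous, positive-semidefinite kernel on $[0,1]\times[0,1]$, the integral operator $Tf(x)=\int_0^1Q(x,y)f(y)\,\mathrm{d}y$ on $L^2([0,1])$ is compact and self-adjoint, yielding the nonnegative eigenvalues $(\lambda_j)_{j\in\mathbb{N}}$, an $L^2$-orthonormal family $(\phi_j)_{j\in\mathbb{N}}$ of eigenfunctions, and the uniformly convergent kernel expansion stated in the theorem.

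Next, for each $j$ with $\lambda_j>0$ I would define
\[W_j(t)=\frac{1}{\sqrt{\lambda_j}}\int_0^1W^c(t,x)\phi_j(x)\,\mathrm{d}x,\]
interpreted as an $L^2(\Omega)$-valued integral. Using the defining properties of a $Q$-Wiener process (Gaussian marginals, independent increments, continuity in $t$) together with joint Gaussianity of $\{W^c(t,x)\}_{t,x}$, I would verify that each $W_j$ has mean zero, continuous paths, independent increments, and satisfies $\mathbb{E}[W_j(s)W_k(t)]=\delta_{jk}\min(s,t)$. The covariance identity follows from a Fubini exchange together with the eigenfunction relation and the $L^2$-orthonormality of $(\phi_j)$. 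Joint Gaussianity then lifts pairwise uncorrelatedness to mutual independence, so $(W_j)_{j\in\mathbb{N}}$ is an adapted family of independent standard Wiener processes.

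To establish the representation \cref{QWienerFullExpansion}, fix $(t,x)$ and set $S_N(t,x)=\sum_{j=1}^N\sqrt{\lambda_j}\phi_j(x)W_j(t)$. A direct covariance calculation gives
\[\mathbb{E}[(W^c(t,x)-S_N(t,x))^2]=t\biggl(Q(x,x)-\sum_{j=1}^N\lambda_j\phi_j(x)^2\biggr),\]
which vanishes as $N\to\infty$ by Mercer's theorem; this yields $L^2(\Omega)$-convergence at each fixed $(t,x)$. To upgrade to the almost-sure statement, I would apply the It\^o--Nisio theorem to the partial sums, viewed as independent symmetric random elements of $C([0,1])$ for each fixed $t$, and then combine this with a Kolmogorov continuity estimate in $t$ to produce a single null set outside of which the expansion holds for every $(t,x)\in[0,\infty)\times[0,1]$.

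The hard part will be justifying almost-sure convergence uniformly in $x$, rather than merely $L^2(\Omega)$-convergence at each $(t,x)$: pointwise $L^2$-convergence is immediate from Mercer's theorem, but the a.s.\ statement needs either It\^o--Nisio in the Banach space $C([0,1])$ (which requires tightness and the trace-class condition $\sum_j\lambda_j<\infty$) or explicit moment estimates feeding Kolmogorov's criterion. All remaining steps reduce to routine covariance bookkeeping using the eigenstructure supplied by Mercer's theorem.
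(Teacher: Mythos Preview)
Your proof plan is the standard Mercer-plus-projection argument and is essentially correct, but you should be aware that the paper does not prove this theorem at all: it is quoted directly from Lord, Powell, and Shardlow \cite{LordPowellShardlow2014} (page 437) as background material, with no proof given or intended. So there is no ``paper's own proof'' to compare against; your write-up already goes well beyond what the paper supplies.

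On the substance of your plan, two small remarks. First, the paper's Definition of a $Q$-Wiener process only specifies marginals at each fixed $x$ and never explicitly states that the family $\{W^c(t,x)\}_{t,x}$ is jointly Gaussian with spatial covariance $Q(x,y)$; you implicitly assume this when computing $\mathbb{E}[W_j(s)W_k(t)]$, so in a self-contained proof you would need either to take that as part of the definition (as Lord--Powell--Shardlow do) or to derive it. Second, your identification of the almost-sure step as the delicate one is accurate: the $L^2(\Omega)$ convergence drops out of Mercer immediately, and the usual upgrade is exactly the It\^o--Nisio / trace-class route you describe, so nothing is missing there.
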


\begin{definition}\label{ItoIntegralofQWienerProcess}Using the notation of \cref{QWienerProcess} and \cref{QKLTheorem}, the \emph{It\^o integral} of $U=(U(t,x))_{(t,x)\in[0,\infty)\times[0,1]}$, with respect to $W^c$, is defined as
\begin{equation}\int_0^tU(s,x)\,\mathrm{d}W^c(s)=\sum_{j=1}^\infty\int_0^tU(s,x)\sqrt{\lambda_j}\phi_j(x)\,\mathrm{d}W_j(s),\quad(t,x)\in[0,\infty)\times[0,1].\label{UItoIntegral}\end{equation}
\end{definition}

\bibliographystyle{siamplain}
\bibliography{Bibliography}

\pagebreak

\begin{center}
\begin{Huge}\textbf{SUPPLEMENTARY\\MATERIAL}\end{Huge}
\end{center}

\section[Alternative proof of existence-uniqueness]{Alternative proof of \Cref{PLS}}\label{AlternativeProofofPLS}

\begin{proof}
Kloeden and Shardlow \cite{KloedenShardlow2012} provide an existence-uniqueness result for the continuous-delays SDDE
\begin{equation}\mathrm{d}X(t)=\underline{f}(t,(X(s))_{s\in[-\tau,t]})\,\mathrm{d}t+\sum_{j=1}^m\underline{g}_j(t,(X(s))_{s\in[-\tau,t]})\,\mathrm{d}W_j(t),\;\,t\in[0,T],\label{SDEwithContinuousDelayFunctions}\end{equation}
with functions $\underline{f},\underline{g}_1,\ldots,\underline{g}_m$ defined at time $t$ and also the entire histories of stochastic processes on $[-\tau,t]$. The existence-uniqueness result for \cref{SDEwithContinuousDelayFunctions} reduces to the same result as ours by restricting the dependence on the past path $(X(s))_{s\in[-\tau,t]}$ to only $X(s)$ at the times $s=t,t-\tau_1,\ldots,t-\tau_K$.
\end{proof}

\section[Proof of Milstein SDDE expansion]{Proof of \Cref{ItoTaylorKuchlerPlaten}}\label{ProofofItoTaylorKuchlerPlaten}

\begin{proof}
This result follows by applying the expansion by K\"uchler and Platen \cite{KuchlerPlaten2000}, who applied the stochastic Taylor expansion (see, for example, Kloeden and Platen \cite{KloedenPlaten1999}) to the single-delay equation
\begin{align}
\mathrm{d}X(t)&=f(t,X(t),X(t-\tau))\,\mathrm{d}t+\sum_{j=1}^mg_j(t,X(t),X(t-\tau))\,\mathrm{d}W_j(t),\quad t\in[0,T],\label{SDDEKis1}\\
X(t)&=\phi(t),\quad t\in[-\tau,0].\nonumber
\end{align}
We summarize this proof for the case of \cref{SDDEKis1}, before then extending to $K$ delays. For each Bellman interval $[\alpha\tau,(\alpha+1)\tau]$ (for nonnegative integer $\alpha$), the It\^o expansion is applied to the system of equations (see equation (10.6) in \cite{KuchlerPlaten2000})
\begin{align}
X(t_{n+1}-q\tau)&=X(t_n-q\tau)+\int_{t_n}^{t_{n+1}}f(s-q\tau,X(s-q\tau),X(s-(q+1)\tau))\,\mathrm{d}s\nonumber\\
&\quad+\sum_{j=1}^m\int_{t_n}^{t_{n+1}}f(s-q\tau,X(s-q\tau),X(s-(q+1)\tau))\,\mathrm{d}W_j(s-q\tau),\label{XtnqSystem}
\end{align}
for each $q=0,\ldots,\alpha$, from which it follows that $X$ is of the form
\begin{align}
X(t_{n+1})&=X(t_n)+f(t_n,X(t_n),X(t_n^{\tau}))\,h_n+\sum_{j=1}^mg_j(t_n,X(t_n),X(t_n^{\tau}))\,\Delta W_j(t_n,t_{n+1})\nonumber\\
&\;+\sum_{i=1}^m\sum_{j=1}^m\nabla_xg_j(t_n,X(t_n),X(t_n^{\tau}))g_i(t_n,X(t_n),X(t_n^{\tau}))\,I_{ij}(t_n,\!t_{n+1})+R\label{DelayTaylorExpansionKis1}\\
&\;+\mathbb{I}(t_n\!\geqslant\!\tau)\!\sum_{i=1}^m\sum_{j=1}^m\nabla_{x_{\tau}}g_j(t_n,X(t_n),X(t_n^{\tau}))g_i(t_n^{\tau},X(t_n^{\tau}),X(t_n^{\tau,\tau}))\,I_{ij}^{\tau}(t_n,\!t_{n+1}),\nonumber
\end{align}
where $R$ contains only multiples of triple or higher-degree integrals, whose integrators are all the Wiener processes, $W_j$, $j=1,\ldots,m$, or the delayed Wiener processes, $W_j^{\alpha_1\tau_1}$, defined by \[W_j^{\alpha\tau}(t)=W_j(t-\alpha\tau),\quad t\in[\alpha\tau,T],\quad\alpha=1,2,\ldots.\]
The condition $h_n<\tau_\mathrm{min}=\tau$ combined with the independent increments of the Wiener processes means that all of
\[W_j,W_j^{\alpha\tau},\quad j=1,\ldots,m,\quad\alpha=1,2,\ldots\]
are independent processes, from which we calculate $\mathbb{E}[\abs{R}^2]<ch_n^{3/2}$ for constant $c>0$. In the case of \cref{Equation1} with $K$ delays, in the Bellman interval $[\sigma_b,\sigma_{b+1}]$, $\sigma_b=\alpha_k\tau_k$ for some $\alpha_k=0,1,\ldots,N_k-1$ and $k=1,\ldots,K$, so that by forming the system
\begin{align}
X(t_{n+1}-q\tau_k)&=X(t_n-q\tau_k)+\int_{t_n}^{t_{n+1}}f(s-q\tau_k,X(s-q\tau_k),\phi_{\sigma_b}(s-(q+1)\tau_k))\,\mathrm{d}s\nonumber\\
&\;+\sum_{j=1}^m\int_{t_n}^{t_{n+1}}f(s-q\tau_k,X(s-q\tau_k),\phi_{\sigma_b}(s-(q+1)\tau_k))\,\mathrm{d}W_j(s-q\tau_k),\label{XtnqSystemMultipleDelays}
\end{align}
($q=0,\ldots,\alpha_k$) in place of \cref{XtnqSystem}, applying \cref{DelayTaylorExpansionKis1} to \cref{XtnqSystemMultipleDelays} gives \cref{DelayTaylorExpansion} for $t_{n+1}<(\alpha_k+1)\tau_k$.

In order to extend the result to the whole Bellman interval $[\sigma_b,\sigma_{b+1}]$ rather than $[\sigma_b,\sigma_b+\tau_k]=[\alpha_k\tau_k,(\alpha_k+1)\tau_k]$, we consider separately the cases of whether or not $[\sigma_b,\sigma_{b+1}]$ is contained in $[\sigma_b,\sigma_b+\tau_k]$, and if not then we extend the system \cref{XtnqSystemMultipleDelays} further ahead, for $q>\alpha_k$, until the time $\sigma_{b+1}$ is passed.

If $\sigma_{b+1}\leqslant\sigma_b+\tau_k$ then the result is already shown. However, if $(\alpha_k+1)\tau_k<\sigma_{b+1}$ then we append to the system \cref{XtnqSystemMultipleDelays} the equations
\begin{align}
X(t_{n+1}-q\tau_k)&=X(t_n-q\tau_k)+\int_{t_n}^{t_{n+1}}f(s-q\tau_k,X(s-q\tau_k),\phi_{\sigma_b}(s-(q+1)\tau_k))\,\mathrm{d}s\nonumber\\
&\;+\sum_{j=1}^m\int_{t_n}^{t_{n+1}}f(s-q\tau_k,X(s-q\tau_k),\phi_{\sigma_b}(s-(q+1)\tau_k))\,\mathrm{d}W_j(s-q\tau_k),\label{XtnqSystemExtended}
\end{align}
($q=\alpha_k+1,\ldots,\alpha_{k'}$) where $\alpha_{k'}$ is an integer that satisfies $\sigma_{b+1}\leqslant\alpha_{k'}\tau_k$. We then use \cref{XtnqSystemExtended} to derive \cref{DelayTaylorExpansion} for all $\sigma_b\leqslant t_n\leqslant t_{n+1}\leqslant\alpha_{k'}\tau_k$, and in particular, for $\sigma_b\leqslant t_n\leqslant t_{n+1}\leqslant \sigma_{b+1}$.
\end{proof}

\section[Proof of exp(-Omega) SDE]{Proof of \cref{LemmaPhiInverse}}\label{ProofofLemmaPhiInverseLemma}

\begin{proof}
\Cref{PhiInverseSDE} is of the form
\begin{equation}\mathrm{d}Z_{t_l}(t)=\underline{A}_0Z_{t_l}(t)\,\mathrm{d}t+\sum_{j=1}^m\underline{A}_jZ_{t_l}(t)\,\mathrm{d}W_j(t),\label{Zoft}\end{equation}
where we recall $\underline{A}_0=\left(-A_0+\sum_{j=1}^mA_j^2\right)$ and $\underline{A}_j=-A_j$ for $j=1,\ldots,m$. Using \cref{Omega}, \cref{Zoft} admits the Magnus expansion
\begin{align}
Z_{t_l}(t)&=\exp\bigg(\bigg(\underline{A}_0-\frac{1}{2}\sum_{j=1}^m\underline{A}_j^2\bigg)\,(t-t_l)+\sum_{j=1}^m\underline{A}_j\,\Delta W_j(t_l,t)\label{ZMagnusForm}\\
&\hspace{2cm}+\frac{1}{2}\sum_{i=0}^m\sum_{j=i+1}^m[\underline{A}_i,\underline{A}_j](I_{ji}(t_l,t)-I_{ij}(t_l,t))+R\bigg)\nonumber\\
&=\exp\bigg(-\bigg(A_0-\frac{1}{2}\sum_{j=1}^mA_j^2\bigg)\,(t-t_l)-\sum_{j=1}^mA_j\,\Delta W_j(t_l,t)\nonumber\\
&\hspace{2cm}-\frac{1}{2}\sum_{i=0}^m\sum_{j=i+1}^m[A_i,A_j](I_{ji}(t_l,t)-I_{ij}(t_l,t))-R\bigg),\nonumber
\end{align}
where $R$ is a matrix multiple of third-degree (and higher) iterated integrals. The equality \cref{ZMagnusForm} is shown by calculating
\begin{align*}
\frac{1}{2}\sum_{i=0}^m\sum_{j=i+1}^m[\underline{A}_i,\underline{A}_j](I_{ji}(t_l,t)-I_{ij}(t_l,t))&=\frac{1}{2}\sum_{i=0}^m\sum_{j=0}^m[\underline{A}_i,\underline{A}_j]I_{ji}(t_l,t)\\
&=-\frac{1}{2}\sum_{i=0}^m\sum_{j=i+1}^m[A_i,A_j](I_{ji}(t_l,t)-I_{ij}(t_l,t)),
\end{align*}
and similar for higher-degree terms. From \cref{Omega}, the expansion \cref{ZMagnusForm} is the matrix exponential of $-\Omega(t_l,t)$, so that $Z_{t_l}(t)=\exp(-\Omega(t_l,t))$.
\end{proof}

\section[Proof of inequalities lemma]{Proof of \cref{NewLemma1}}\label{ProofofNewLemma1}

\begin{proof}
We first show the inequalities
\begin{align}
\mathbb{E}\left[\abs{g_j(t,X(t),\phi_{\sigma_b}(t))}^2\right]&\leqslant c,\label{NewLemma1Bound1}\\
\mathbb{E}\left[\abs{X(t)}^4\right]&\leqslant c,\label{NewLemma1Bound2}\\
\mathbb{E}\left[\abs{g_j(t,X(t),\phi_{\sigma_b}(t))}^4\right]&\leqslant c,\quad\textrm{and}\label{NewLemma1Bound3}\\
\mathbb{E}\left[\abs{X(t_{l+1})-X(t_l)}^2\right]&\leqslant c\,h_l,\label{NewLemma1Bound4}
\end{align}
for $t\in[\sigma_b,\sigma_{b+1}]$. We then use these to show \Cref{NewLemma1Bound5,NewLemma1Bound6,NewLemma1Bound7}. By \cref{PLS}, $\mathbb{E}[\abs{X(t)}^2]$ is bounded, for all $t$. Combining this with the linear-growth bound \cref{LinearGrowthBound} shows that
\begin{equation*}
\mathbb{E}\left[\abs{g_j(t,X(t),\phi_{\sigma_b}(t))}^2\right]\leqslant\mathbb{E}\bigg[L_j^{(1)}\bigg(1+\abs{X(t)}^2+\sum_{k=1}^K\abs{X(t-\tau_k)}^2\bigg)\bigg]
\end{equation*}
is bounded, establishing \cref{NewLemma1Bound1}. From our assumption that $\mathbb{E}[\abs{\phi(t)}^4]<\infty$ for all $t\in[-\tau,0]$, it follows that there is a constant $c>0$ such that
\[\mathbb{E}\left[\abs{X(t)}^4\right]\leqslant\left(1+\mathbb{E}\left[\abs{\phi(0)}^4\right]\right)\exp(c(T+\tau)),\quad t\in[\sigma_b,\sigma_{b+1}],\]
from \cite[page 136]{KloedenPlaten1999}, which shows \cref{NewLemma1Bound2}. We then calculate
\begin{align*}
\mathbb{E}\left[\abs{g_j(t,X(t),\phi_{\sigma_b}(t))}^4\right]\leqslant\left(L_j^{(1)}\right)^2&\bigg(1+2(K+1)\sup_{t\in[-\tau,T]}\mathbb{E}\left[\abs{X(t)}^2\right]\\
&\hspace{1cm}+(K+1)\sup_{t\in[-\tau,T]}\mathbb{E}\left[\abs{X(t)}^4\right]\bigg),
\end{align*}
which is bounded and shows \cref{NewLemma1Bound3}. Using \cref{NewLemma1Bound1}, it follows that
\begin{align*}
\mathbb{E}\left[\abs{X(t_{l+1})-X(t_l)}^2\right]&=\mathbb{E}\bigg[\bigg|\int_{t_l}^{t_{l+1}}[A_0X(u)+f(u,X(u),\phi_{\sigma_b}(u))]\,\mathrm{d}u\\
&\quad\quad+\sum_{j=1}^m\int_{t_l}^{t_{l+1}}[A_jX(u)+g_j(u,X(u),\phi_{\sigma_b}(u))]\,\mathrm{d}W_j(u)\bigg|^2\bigg]\\
&\leqslant\int_{t_l}^{t_{l+1}}\mathbb{E}\left[\abs{A_0X(u)+f(u,X(u),\phi_{\sigma_b}(u))}^2\right]\,\mathrm{d}u\\
&\quad\quad+\sum_{j=1}^m\int_{t_l}^{t_{l+1}}\mathbb{E}\left[\abs{A_jX(u)+g_j(u,X(u),\phi_{\sigma_b}(u))}^2\right]\,\mathrm{d}u\\
&\leqslant c_0(t_{l+1}-t_l)+\sum_{j=1}^mc_j(t_{l+1}-t_l),
\end{align*}
for constants $c_0,\ldots,c_m>0$. \Cref{NewLemma1Bound4} follows, with $c=c_0+\cdots+c_m$.

By applying the It\^o isometry, the Cauchy--Schwarz inequality, swapping the expectation and the integrals with the Fubini--Tonelli Theorem, and then using \cref{NewLemma1Bound1}, it follows that
\begin{align*}
\mathbb{E}&\bigg[\abs{\int_{t_l}^{t_{l+1}}\left(\int_{t_l}^sg_j(u,X(u),\phi_{\sigma_b}(u))\,\mathrm{d}u\right)\,\mathrm{d}W_j(s)}^2\bigg]\\
&\leqslant\int_{t_l}^{t_{l+1}}\int_{t_l}^s\mathbb{E}\left[\abs{g_j(u,X(u),\phi_{\sigma_b}(u))}^2\right]\,\mathrm{d}u\,\mathrm{d}s\leqslant c(t_{l+1}-t_l)^2,
\end{align*}
for constant $c>0$, showing \cref{NewLemma1Bound5}. By applying the It\^o isometry twice, along with the spatial Lipschitz bound \cref{LipschitzBound},
\begin{align*}
\mathbb{E}&\bigg[\bigg|\int_{t_l}^{t_{l+1}}\int_{t_l}^sA_j\big([A_iX(t_l)+g_i(t_l,X(t_l),\phi_{\sigma_b}(t_l))]\\
&\quad\quad\quad\quad-[A_iX(u)+g_i(u,X(u),\phi_{\sigma_b}(u))]\big))\,\mathrm{d}W_i(u)\,\mathrm{d}W_j(s)\bigg|^2\bigg]\\
&\leqslant\int_{t_l}^{t_{l+1}}\mathbb{E}\bigg[\int_{t_l}^s\abs{A_jA_i(X(t_l)-X(u))}^2\,\mathrm{d}u\\
&\quad\quad\quad\quad+\int_{t_l}^s\abs{A_j(g_i(t_l,X(t_l),\phi_{\sigma_b}(t_l))-g_i(u,X(u),\phi_{\sigma_b}(u)))}^2\,\mathrm{d}u\bigg]\,\mathrm{d}s\\
&\leqslant\int_{t_l}^{t_{l+1}}\int_{t_l}^s\mathbb{E}\bigg[\abs{A_jA_i(X(t_l)-X(u))}^2\\
&\quad\quad\quad\quad+\abs{A_jL_j^{(2)}\bigg(\abs{X(t_1)-X(u)}+\sum_{k=1}^K\abs{X(t_l-{\tau_k})-X(u-{\tau_k})}\bigg)}^2\bigg]\,\mathrm{d}u\,\mathrm{d}s\\
&\leqslant c(t_{l+1}-t_l)^2,
\end{align*}
for constant $c>0$, showing \cref{NewLemma1Bound6}. Similarly to showing \cref{NewLemma1Bound5}, we calculate
\begin{align*}
\mathbb{E}&\bigg[\abs{\int_{t_l}^{t_{l+1}}\int_{t_l}^sA_j[A_0X(u)+f(u,X(u),\phi_{\sigma_b}(u))]\,\mathrm{d}u\,\mathrm{d}W_j(s)}^2\bigg]\\
&\leqslant\int_{t_l}^{t_{l+1}}\int_{t_l}^s\mathbb{E}\left[\abs{A_j[A_0X(u)+f(u,X(u),\phi_{\sigma_b}(u))]}^2\right]\,\mathrm{d}u\,\mathrm{d}s\leqslant c(t_{l+1}-t_l)^2,
\end{align*}
for constant $c>0$, showing \cref{NewLemma1Bound7}.
\end{proof}

\section[Proof of integral expressions]{Proof of \cref{NewLemma2}}\label{ProofofNewLemma2}

\begin{proof}
In order to simplify notation, we find it convenient to write $g_j(s)=g_j(s,X(s),\phi_{\sigma_b}(s))$ for each $j=0,1,\ldots,m$, where $g_0=f$. The first integral of \cref{Isum} is
\begin{align*}
I^{(1)}(t_l,t_{l+1})&=\sum_{j=1}^m\int_{t_l}^{t_{l+1}}g_j(s,X(s),\phi_{\sigma_b}(s))\,\mathrm{d}W_j(s)\\
&=\int_{t_l}^{t_{l+1}}[A_0X(s)+f(s,X(s),\phi_{\sigma_b}(s))]\,\mathrm{d}s\\
&\quad+\sum_{j=1}^m\int_{t_l}^{t_{l+1}}[A_jX(s)+g_j(s,X(s),\phi_{\sigma_b}(s))]\,\mathrm{d}W_j(s)\\
&\quad-\int_{t_l}^{t_{l+1}}[A_0X(s)+f(s,X(s),\phi_{\sigma_b}(s))]\,\mathrm{d}s-\sum_{j=1}^m\int_{t_l}^{t_{l+1}}A_jX(s)\,\mathrm{d}W_j(s).
\end{align*}
By applying the Taylor expansion, \cref{ItoTaylorKuchlerPlaten}, we calculate
\begin{align}
I^{(1)}&(t_l,t_{l+1})=X(t_{l+1})-X(t_l)\nonumber\\
&\hspace{1.5cm}-\int_{t_l}^{t_{l+1}}[A_0X(s)+f(s,X(s),\phi_{\sigma_b}(s))]\,\mathrm{d}s-\sum_{j=1}^m\int_{t_l}^{t_{l+1}}A_jX(s)\,\mathrm{d}W_j(s)\nonumber\\
&=[A_0X(t_l)+f(t_l,X(t_l),\phi_{\sigma_b}(t_l))]\,(t_{l+1}-t_l)\nonumber\\
&\quad+\sum_{j=1}^m[A_jX(t_l)+g_j(t_l,X(t_l),\phi_{\sigma_b}(t_l))]\,\Delta W_j(t_l,t_{l+1})\nonumber\\
&\quad+\sum_{i=1}^m\sum_{j=1}^m[A_j+\nabla_xg_j(t_l,X(t_l),\phi_{\sigma_b}(t_l))]\nonumber\\
&\quad\quad\cdot[A_iX(t_l)+g_i(t_l,X(t_l),\phi_{\sigma_b}(t_l))]\,I_{ij}(t_l,t_{l+1})\nonumber\\
&\quad+\sum_{k=1}^K\mathbb{I}(t_l\geqslant\tau_k)\sum_{i=1}^m\sum_{j=1}^m\nabla_{x_{\tau_k}}g_j(t_l,X(t_l),\phi_{\sigma_b}(t_l))\nonumber\\
&\quad\quad\cdot[A_iX(t_l^{\tau_k})+g_i(t_l^{\tau_k},X(t_l^{\tau_k}),\phi_{\sigma_b}(t_l^{\tau_k}))]\,I_{ij}^{\tau_k}(t_l,t_{l+1})\nonumber\\
&\quad+R^{(1)}-\int_{t_l}^{t_{l+1}}[A_0X(s)+f(s,X(s),\phi_{\sigma_b}(s))]\,\mathrm{d}s-\sum_{j=1}^m\int_{t_l}^{t_{l+1}}A_jX(s)\,\mathrm{d}W_j(s)\nonumber\\
&=\sum_{j=1}^mg_j(t_l,X(t_l),\phi_{\sigma_b}(t_l))\,\Delta W_j\nonumber\\
&\quad+\sum_{i=1}^m\sum_{j=1}^m\nabla_xg_j(t_l,X(t_l),\phi_{\sigma_b}(t_l))[A_iX(t_l)+g_i(t_l,X(t_l),\phi_{\sigma_b}(t_l))]\,I_{ij}(t_l,t_{l+1})\nonumber\\
&\quad+\sum_{k=1}^K\mathbb{I}(t_l\geqslant\tau_k)\sum_{i=1}^m\sum_{j=1}^m\nabla_{x_{\tau_k}}g_j(t_l,X(t_l),\phi_{\sigma_b}(t_l))\nonumber\\
&\quad\quad\cdot[A_iX(t_l^{\tau_k})+g_i(t_l^{\tau_k},X(t_l^{\tau_k}),\phi_{\sigma_b}(t_l^{\tau_k}))]\,I_{ij}^{\tau_k}(t_l,t_{l+1})\nonumber\\
&\quad+R^{(1)}+F+G,\nonumber
\end{align}
where $\mathbb{E}[\abs{R^{(1)}}^2]<c(t_{l+1}-t_l)^{3/2}$ for constant $c>0$,
\begin{align*}
F&=f(t_l,X(t_l),\phi_{\sigma_b}(t_l))\,(t_{l+1}-t_l)+A_0X(t_l)\,(t_{l+1}-t_l)\\
&\quad-\int_{t_l}^{t_{l+1}}[A_0X(s)+f(s)]\,\mathrm{d}s,\quad\textrm{and}\\
G&=-\sum_{j=1}^m\int_{t_l}^{t_{l+1}}A_jX(s)\,\mathrm{d}W_j(s)+\sum_{j=1}^mA_jX(t_l)\,\Delta W_j\nonumber\\
&\quad+\sum_{i=1}^m\sum_{j=1}^mA_j[A_iX(t_l)+g_i(t_l,X(t_l),\phi_{\sigma_b}(t_l))]\,I_{ij}(t_l,t_{l+1}).
\end{align*}
Using the Lipschitz bound \cref{LipschitzBound} and \cref{NewLemma1}, we calculate
\begin{align*}
\mathbb{E}\left[\abs{F}^2\right]&\leqslant\mathbb{E}\bigg[\int_{t_l}^{t_{l+1}}\abs{A_0(X(t_l)-X(s))}^2\\
&\quad\quad+\abs{f(t_l,X(t_l),\phi_{\sigma_b}(t_l))-f(s,X(s),\phi_{\sigma_b}(s))}^2\,\mathrm{d}s\bigg]\\
&\leqslant\mathbb{E}\bigg[\int_{t_l}^{t_{l+1}}\abs{A_0(X(t_l)-X(s))}^2\\
&\quad\quad+\abs{L_j^{(2)}\bigg((X(t_l)-X(s))+\sum_{k=1}^K(X(t_l-\tau_k)-X(s-\tau_k))\bigg)}^2\,\mathrm{d}s\bigg]\\
&\leqslant c_1(t_{l+1}-t_l)^2,
\end{align*}
for constant $c_1>0$, while
\begin{align*}
G&=\sum_{j=1}^m\sum_{i=1}^mA_j[A_iX(t_l)+g_i(t_l,X(t_l),\phi_{\sigma_b}(t_l))]\,I_{ij}(t_l,t_{l+1})\\
&\quad-\sum_{j=1}^m\int_{t_l}^{t_{l+1}}A_j(X(s)-X(t_l))\,\mathrm{d}W_j(s)\\
&=\sum_{j=1}^m\sum_{i=1}^mA_j[A_iX(t_l)+g_i(t_l,X(t_l),\phi_{\sigma_b}(t_l))]\,I_{ij}(t_l,t_{l+1})\\
&\quad-\sum_{j=1}^m\int_{t_l}^{t_{l+1}}A_j\bigg(\int_{t_l}^s[A_0X(u)+f(u)]\,\mathrm{d}u\\
&\quad\quad\quad\quad+\sum_{i=1}^m\int_{t_l}^s[A_iX(u)+g_i(u)]\,\mathrm{d}W_i(u)\bigg)\,\mathrm{d}W_j(s)\\
&=\sum_{j=1}^m\sum_{i=1}^m\int_{t_l}^{t_{l+1}}\int_{t_l}^sA_j\bigg([A_iX(t_l)+g_i(t_l,X(t_l),\phi_{\sigma_b}(t_l))]\\
&\quad\quad\quad\quad-[A_iX(u)+g_i(u)]\bigg)\,\mathrm{d}W_i(u)\,\mathrm{d}W_j(s)\\
&\quad-\sum_{j=1}^m\int_{t_l}^{t_{l+1}}\int_{t_l}^sA_j[A_0X(u)+f(u)]\,\mathrm{d}u\,\mathrm{d}W_j(s),\\
\end{align*}
so that $\mathbb{E}[\abs{G}^2]<c_2(t_{l+1}-t_l)^2$, for constant $c_2>0$, from \cref{NewLemma1Bound6}.

As for the second integral of \cref{Isum}, we apply the H\"older inequality, along with the fact that $Z$ is a geometric Brownian motion with known moments, to compute
\begin{align*}
\mathbb{E}&\left[\abs{I^{(2)}(t_l,t_{l+1})}^2\right]\leqslant\sum_{j=1}^m\mathbb{E}\bigg[\abs{\int_{t_l}^{t_{l+1}}\left(\int_{t_l}^s\underline{A}_0Z_{t_l}(u)\,\mathrm{d}u\right)g_j(s,X(s),\phi_{\sigma_b}(s))\,\mathrm{d}W_j(s)}^2\bigg]\\
&=\sum_{j=1}^m\int_{t_l}^{t_{l+1}}\mathbb{E}\bigg[\abs{\left(\int_{t_l}^s\underline{A}_0Z_{t_l}(u)\,\mathrm{d}u\right)g_j(s,X(s),\phi_{\sigma_b}(s))}^2\bigg]\,\mathrm{d}s\\
&\leqslant\sum_{j=1}^m\int_{t_l}^{t_{l+1}}\left(\mathbb{E}\bigg[\norm{\int_{t_l}^s\underline{A}_0Z_{t_l}(u)\,\mathrm{d}u}^4\bigg]\mathbb{E}\left[\abs{g_j(s,X(s),\phi_{\sigma_b}(s))}^4\right]\right)^{1/2}\,\mathrm{d}s,
\end{align*}
where $\norm{\cdot}$ denotes the operator norm (although this result holds for any consistent norm). Using \cref{NewLemma1Bound3} and the fourth moment of geometric Brownian motion, we establish \cref{I2}.

The third integral of \cref{Isum} is
\begin{equation*}
I^{(3)}(t_l,t_{l+1})=\sum_{j=1}^m\int_{t_l}^{t_{l+1}}\bigg(\sum_{i=1}^m\int_{t_l}^s\underline{A}_iZ_{t_l}(u)\,\mathrm{d}W_i(u)\bigg)g_j(s,X(s),\phi_{\sigma_b}(s))\,\mathrm{d}W_j(s),
\end{equation*}
which we expand as
\begin{align}
I^{(3)}(t_l,t_{l+1})&=\sum_{j=1}^m\sum_{i=1}^m\int_{t_l}^{t_{l+1}}\bigg(\int_{t_l}^s\underline{A}_iZ_{t_l}(u)\,\mathrm{d}W_i(u)\,\nonumber\\
&\hspace{3cm}\cdot[g_j(s,X(s),\phi_{\sigma_b}(s))-g_j(t_l,X(t_l),\phi_{\sigma_b}(t_l))]\bigg)\,\mathrm{d}W_j(s)\nonumber\\
&\quad-\sum_{j=1}^m\sum_{i=1}^m\underline{A}_i\int_{t_l}^{t_{l+1}}\int_{t_l}^sZ_{t_l}(u)\,\mathrm{d}W_i(u)\,g_j(t_l,X(t_l),\phi_{\sigma_b}(t_l))\,\mathrm{d}W_j(s).\label{I3TwoParts}
\end{align}
By substituting the result of \cref{LemmaPhiInverse} into \cref{I3TwoParts}, it follows that
\begin{align}
I^{(3)}(t_l,t_{l+1})&=\sum_{j=1}^m\sum_{i=1}^m\underline{A}_i\int_{t_l}^{t_{l+1}}\bigg(\int_{t_l}^sZ_{t_l}(u)\,\mathrm{d}W_i(u)\,\nonumber\\
&\hspace{3cm}\cdot[g_j(s,X(s),\phi_{\sigma_b}(s))-g_j(t_l,X(t_l),\phi_{\sigma_b}(t_l))]\bigg)\,\mathrm{d}W_j(s)\nonumber\\
&\quad+\sum_{j=1}^m\sum_{i=1}^m\underline{A}_iZ(t_l)g_j(t_l,X(t_l),\phi_{\sigma_b}(t_l))\,I_{ij}(t_l,t_{l+1})\label{I3ThreeParts}\\
&\quad+\sum_{j=1}^m\sum_{i=1}^m\int_{t_l}^{t_{l+1}}\int_{t_l}^s\bigg[\int_{t_l}^u\underline{A}_0Z(v)\,\mathrm{d}v\nonumber\\
&\hspace{2cm}+\sum_{q=1}^m\int_{t_l}^u\underline{A}_q\,\mathrm{d}W_q(v)\bigg]\,\mathrm{d}W_i(u)\,\mathrm{d}W_j(s)\,\cdot g_j(t_l,X(t_l),\phi_{\sigma_b}(t_l)).\nonumber
\end{align}
Applying the It\^o isometry, the Lipschitz bound for $g_j$, along with the H\"older inequality shows that the first term of \cref{I3ThreeParts} satisfies the mean-square bound
\begin{align*}
\mathbb{E}&\bigg[\bigg|\sum_{j=1}^m\sum_{i=1}^mA_i\int_{t_l}^{t_{l+1}}\bigg(\int_{t_l}^sZ_{t_l}(u)\,\mathrm{d}W_i(u)\,\\
&\hspace{4cm}\cdot[g_j(s,X(s),\phi_{\sigma_b}(s))-g_j(t_l,X(t_l),\phi_{\sigma_b}(t_l))]\bigg)\,\mathrm{d}W_j(s)\bigg|^2\bigg]\\
&\leqslant\sum_{j=1}^m\sum_{i=1}^mc(t_{l+1}-t_l)^2,
\end{align*}
for constant $c>0$, while the third term of \cref{I3ThreeParts} satisfies
\begin{align*}
\mathbb{E}&\bigg[\bigg|\sum_{j=1}^m\sum_{i=1}^m\int_{t_l}^{t_{l+1}}\int_{t_l}^s\bigg[\int_{t_l}^u\underline{A}_0Z(v)\,\mathrm{d}v\\
&\hspace{3cm}+\sum_{q=1}^m\int_{t_l}^u\underline{A}_q\,\mathrm{d}W_q(v)\bigg]\,\mathrm{d}W_i(u)\,\mathrm{d}W_j(s)\,g_j(t_l,X(t_l),\phi_{\sigma_b}(t_l))\bigg|^2\bigg]\\
&\leqslant\sum_{j=1}^m\sum_{i=1}^m\int_{t_l}^{t_{l+1}}\int_{t_l}^s\bigg(\mathbb{E}\bigg[\bigg\Vert A_i\int_{t_l}^u\underline{A}_0Z(v)\,\mathrm{d}v+A_i\sum_{q=1}^m\int_{t_l}^u\underline{A}_q\,\mathrm{d}W_q(v)\bigg\Vert^4\bigg]\\
&\hspace{4cm}\times\mathbb{E}\left[\abs{g_j(t_l,X(t_l),\phi_{\sigma_b}(t_l))}^4\right]\bigg)^{1/2}\,\mathrm{d}u\,\mathrm{d}s.
\end{align*}
Combining these shows
\begin{equation}I^{(3)}(t_l,t_{l+1})=-\sum_{j=1}^m\sum_{i=1}^mA_iZ(t_l)g_j(t_l,X(t_l),\phi_{\sigma_b}(t_l))\,I_{ij}(t_l,t_{l+1})+R^{(3)},\end{equation}
where $\mathbb{E}[\abs{R^{(3)}}^2]\leqslant c(t_{l+1}-t_l)^2$ for constant $c>0$.
\end{proof}

\section[Taylor expansions for Magnus approximations]{Proof of \cref{NewLemma3}}\label{ProofofNewLemma3}

\begin{proof}
Calculating the expectation of the integral form of \cref{dXHd} gives
\begin{equation}\mathbb{E}X_H(t)=\mathbb{E}X_H(t_n)+\mathbb{E}\int_{t_n}^tA_0X_H(s)\,\mathrm{d}s.\label{EXHIntegralForm}\end{equation}
The integral \cref{EXHIntegralForm} has known solution \cref{EXHconditional}. Using the Multinomial Theorem,
\begin{align*}
\exp\left(\Omega^{[1]}(t_n,t_{n+1})\right)&=I_d+\hat{A}_0\,h+\sum_{j=1}^mA_j\,\Delta W_j(t_n,t_{n+1})+\frac{\left(\hat{A}_0\,h\right)^2}{2}\\
&\quad+\frac{1}{2}\sum_{j=1}^m\left(\hat{A}_0\,A_j+A_j\,\hat{A}_0\right)\,h\,\Delta W_j(t_n,t_{n+1})\\
&\quad+\frac{1}{2}\sum_{i=1}^m\sum_{j=1}^mA_i\,A_j\,\Delta W_i(t_n,t_{n+1})\,\Delta W_j(t_n,t_{n+1})\\
&\quad+\sum_{z=2}^\infty\sum_{k_0+\cdots+k_m=z}\!\frac{z!}{k_0!\cdots k_m!}\!\left(\!\hat{A}_0\,h\!\right)^{k_0}\prod_{j=1}^m\left(A_j\,\Delta W_j(t_n,t_{n+1})\right)^{k_j}.
\end{align*}
Due to the independence of the Wiener processes, it follows that
\begin{align}
&\mathbb{E}\left[\exp\left(\Omega^{[1]}(t_n,t_{n+1})\right)\mid\exp\left(\Omega^{[1]}(t_n,t_n)\right)=I_d\right]\nonumber\\
&\quad=I_d+\hat{A}_0\,h+\frac{1}{2}\bigg(\hat{A}_0^2\,h^2+\sum_{j=1}^m A_j^2\,h\bigg)+R_1=I_d+A_0\,h+\frac{\left(\hat{A}_0\,h\right)^2}{2}+R_1,\label{LemmaEexpOmega1}
\end{align}
where
\[R_1=\sum_{z=2}^\infty\sum_{k_0+\cdots+k_m=z}\frac{z!}{k_0!\,\cdots\,k_m!}\left(\hat{A}_0\,h\right)^{k_0}\prod_{j=1}^mA_j^{k_j}\mathbb{E}\left[\Delta W_j(t_n,t_{n+1})^{k_j}\right].\]
The moments of the Wiener increments are known and given by
\[\mathbb{E}\left[\Delta W_j(t_n,t_{n+1})^{2l+1}\right]=0\quad\textrm{and}\quad\mathbb{E}\left[\Delta W_j(t_n,t_{n+1})^{2l}\right]=\frac{(2l)!\,h^l}{l!\,2^l},\quad l=0,1,\ldots,\]
which we substitute into \cref{LemmaEexpOmega1} to show \cref{EOmega1conditional}. Finally, we note that
\begin{equation}\mathbb{E}[I_{ji}-I_{ij}]=2\mathbb{E}[I_{ji}]\begin{cases}=0&\textrm{if }i=j\textrm{ or }i=0\textrm{ of }j=0,\\ <ch^2&\mathrm{otherwise},\end{cases}\label{EIji}\end{equation}
for constant $c>0$ (see \cite[page 191]{KloedenPlaten1999}). With this, we calculate
\begin{align*}
\mathbb{E}&\left[\exp\left(\Omega^{[2]}(t_n,t_{n+1})\right)\mid\exp\left(\Omega^{[2]}(t_n,t_n)\right)=I_d\right]\\
&=\sum_{z=0}^\infty\frac{1}{z!}\mathbb{E}\bigg[\hat{A}_0\,h+\sum_{j=1}^mA_j\,\Delta W_j(t_n,t_{n+1})+\frac{1}{2}\sum_{i=0}^m\sum_{j=i+1}^m[A_i,A_j]\,(I_{ji}-I_{ij})\bigg]^z\\
&=I_d+A_0\,h+\frac{\left(\hat{A}_0\,h\right)^2}{2}+R_2,
\end{align*}
where $R_2$ is a (constant) matrix multiple of $h^2$ and higher-order terms, showing the result.
\end{proof}

\section[Componentwise It\^o expansion]{An expression for the product of negative Magnus exponential and function}\label{ProofofNewLemma4}

\begin{lemma}\label{NewLemma4}
For $s\in[t_n,t_{n+1}]$, the value $E_{t_n}^s=\exp(-\Omega(t_n,s))y(s)$ is given by
\begin{align}
E_{t_n}^s&=y(t_n)+\int_{t_n}^s\exp(-\Omega(t_n,u))\nabla y(u)\!\cdot\![A_0X(u)+f(u)]\,\mathrm{d}u\nonumber\\
&\;+\sum_{j=1}^m\int_{t_n}^s\exp(-\Omega(t_n,u))\nabla y(u)\!\cdot\![A_jX(u)+g_j(u)]\,\mathrm{d}W_j(u)\label{ExpNegOmegatns}\\
&\;+\frac{1}{2}\sum_{j=1}^m\int_{t_n}^s[A_jX(u)+g_j(u)]^\intercal\exp(-\Omega(t_n,u))\nabla\nabla y(u)\!\cdot\![A_jX(u)+g_j(u)]\,\mathrm{d}u\nonumber\\
&\;+\int_{t_n}^s\underline{A}_0\exp(-\Omega(t_n,u))y(u)\,\mathrm{d}u+\sum_{j=1}^m\int_{t_n}^s(-A_j)\exp(-\Omega(t_n,u))y(u)\,\mathrm{d}W_j(u)\nonumber\\
&\;+\sum_{j=1}^m\int_{t_n}^s(-A_j)\exp(-\Omega(t_n,u))\nabla y(u)\!\cdot\![A_jX(u)+g_j(u)]\,\mathrm{d}u,\nonumber
\end{align}
for each $y=f,\tilde{f},g_1,\ldots,g_m$, where $\nabla\nabla y$ denotes the Hessian matrix of $y$.
\end{lemma}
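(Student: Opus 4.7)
\textbf{Proof proposal for \Cref{NewLemma4}.}

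The plan is to apply the It\^o product rule to the matrix-vector product $Z_{t_n}(s)\,y(s)$, where by \cref{LemmaPhiInverse} we may identify $\exp(-\Omega(t_n,s)) = Z_{t_n}(s)$. This replaces the Magnus exponential (whose dynamics are awkward to differentiate directly) by the solution of an explicit It\^o SDE, which is the key simplification.

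First, I would record the two constituent SDEs. From \cref{LemmaPhiInverse},
\[dZ_{t_n}(s) = \underline{A}_0 Z_{t_n}(s)\,ds + \sum_{j=1}^m (-A_j) Z_{t_n}(s)\,dW_j(s), \quad Z_{t_n}(t_n)=I_d.\]
From the SDDE \cref{Equation1} combined with It\^o's formula applied to $y(s,X(s),\phi_{\sigma_b}(s))$ on the Bellman interval $[\sigma_b,\sigma_{b+1}]$,
\begin{align*}
dy(s) &= \nabla y(s)\!\cdot\![A_0X(s)+f(s)]\,ds + \sum_{j=1}^m \nabla y(s)\!\cdot\![A_jX(s)+g_j(s)]\,dW_j(s)\\
&\quad + \tfrac{1}{2}\sum_{j=1}^m [A_jX(s)+g_j(s)]^\intercal\nabla\nabla y(s)\!\cdot\![A_jX(s)+g_j(s)]\,ds.
\end{align*}
Crucially, the delayed arguments $X(s-\tau_k)$ contribute no It\^o quadratic variation with $Z_{t_n}$ on $[\sigma_b,\sigma_{b+1}]$, because the Wiener increments driving them come from the prior Bellman intervals and are independent of the increments of $W_1,\ldots,W_m$ on the current interval.

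Next I would apply the It\^o product rule
\[d(Z_{t_n}\,y) = (dZ_{t_n})\,y + Z_{t_n}\,(dy) + d\langle Z_{t_n},\,y\rangle,\]
where the cross-variation is evaluated term-by-term using the orthogonality relations $d\langle W_i,W_j\rangle = \delta_{ij}\,ds$. The only nonzero contribution comes from pairing the $(-A_j)Z_{t_n}\,dW_j$ part of $dZ_{t_n}$ with the $\nabla y\!\cdot\![A_jX+g_j]\,dW_j$ part of $dy$, yielding
\[d\langle Z_{t_n},y\rangle = \sum_{j=1}^m(-A_j)Z_{t_n}(s)\nabla y(s)\!\cdot\![A_jX(s)+g_j(s)]\,ds.\]
Integrating from $t_n$ to $s$, using $Z_{t_n}(t_n)y(t_n)=y(t_n)$, and collecting the $dW_j$ terms with the three distinct $ds$ groups (drift of $y$, It\^o-correction quadratic variation of $y$, drift of $Z$, and cross-variation), produces exactly the six summands displayed in \cref{ExpNegOmegatns}.

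The main obstacle is bookkeeping rather than analysis: one must be careful that the cross-variation picks up only the diffusion-diffusion pairing between $dZ_{t_n}$ and $dy$, that the delayed components of $y$ drop out of the cross term on $[\sigma_b,\sigma_{b+1}]$ by the independence argument above, and that the ordering of matrix-vector products is preserved (so that $(-A_j)$ sits on the outside, then $Z_{t_n}$, then $\nabla y$ acting on the bracketed vector), matching the arrangement in \cref{ExpNegOmegatns}.
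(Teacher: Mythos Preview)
Your proposal is correct and matches the paper's approach: both apply the It\^o product rule to $\exp(-\Omega(t_n,s))\,y(s)$, using the SDE for $Z_{t_n}=\exp(-\Omega(t_n,\cdot))$ from \cref{LemmaPhiInverse} together with It\^o's formula for $y$ along the solution $X$. The only difference is presentational---the paper carries out the computation componentwise (expanding all matrix and vector indices explicitly) before reassembling into matrix form, whereas you work directly in matrix-vector notation throughout; the underlying argument, including the identification of the cross-variation term, is identical.
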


\begin{proof}
We establish a componentwise expression for $E_{t_n}^s=\exp(-\Omega(t_n,s))y(s)$, by applying the It\^o product rule (also known as integration by parts for It\^o processes). Writing $X(u)=(X_1(u),\ldots,X_d(u))^\intercal$, $y=(y_1,\ldots,y_d)^\intercal$, $g_j=(g_{1,j},\ldots,g_{d,j})^\intercal$, $\exp(-\Omega(t_n,s))=(e_{ab}(s))_{a,b=1,\ldots,d}$, $\underline{A}_0=(\underline{A}_0^{ab})_{a,b=1,\ldots,d}$, and $A_j=(A_j^{ab})_{a,b=1,\ldots,d}$ (for $j=0,1,\ldots,m$), in component and matrix form, the $a^\mathrm{th}$ component of $E_{t_n}^s$ is
\begin{align}
\sum_{b=1}^de_{ab}(s)y_b(s)&=\sum_{b=1}^d\bigg(e_{ab}(t_n)y_b(t_n)+\int_{t_n}^se_{ab}(u)\,\mathrm{d}y_b(u)\nonumber\\
&\quad\quad+\int_{t_n}^sy_b(u)\,\mathrm{d}e_{ab}(u)+\int_{t_n}^s\,\mathrm{d}\left[e_{ab}(u),y_b(u)\right]\bigg)\nonumber\\
&=y_a(t_n)+\sum_{b=1}^d\bigg[\int_{t_n}^se_{ab}(u)\bigg(\sum_{i=1}^d\frac{\partial y_b(u)}{\partial x_i}\,\mathrm{d}X_i(u)\label{BDA}\\
&\quad\quad\quad\quad\quad+\frac{1}{2}\sum_{i=1}^d\sum_{j=1}^d\frac{\partial^2y_b(u)}{\partial x_i\,\partial x_j}\,\mathrm{d}\left[X_i(u),X_j(u)\right]\bigg)\nonumber\\
&\quad+\int_{t_n}^sy_b(u)\bigg(\underline{A}_0^{ab}e_{ab}(u)\,\mathrm{d}u+\sum_{j=1}^m\left(-A_j^{ab}\right)e_{ab}(u)\,\mathrm{d}W_j(u)\bigg)\nonumber\\
&\quad+\int_{t_n}^s\bigg(\sum_{i=1}^d\frac{\partial y_b(u)}{\partial x_i}\,\mathrm{d}X_i(u)+\frac{1}{2}\sum_{i=1}^d\sum_{j=1}^d\frac{\partial^2y_b(u)}{\partial x_i\,\partial x_j}\,\mathrm{d}\left[X_i(u),X_j(u)\right]\bigg)\nonumber\\
&\quad\quad\quad\times\bigg(\underline{A}_0^{ab}e_{ab}(u)\,\mathrm{d}u+\sum_{j=1}^m\left(-A_j^{ab}\right)e_{ab}(u)\,\mathrm{d}W_j(u)\bigg)\bigg].\nonumber
\end{align}
We calculate
\[\mathrm{d}[X_i(u),X_j(u)]=\sum_{c=1}^d\sum_{l=1}^m\left(A_l^{ic}X_c(u)+g_{i,l}(u)\right)\left(A_l^{jc}X_c(u)+g_{j,l}(u)\right)\,\mathrm{d}u,\]
which substituted into \cref{BDA} yields
\begin{align}
\sum_{b=1}^de_{ab}(s)y_b(s)&=y_a(t_n)+\sum_{b=1}^d\int_{t_n}^se_{ab}(u)\sum_{i=1}^d\frac{\partial y_b(u)}{\partial x_i}\left[\left(\sum_{c=1}^dA_0^{ic}X_c(u)\right)+f_i(u)\right]\,\mathrm{d}u\nonumber\\
&\;+\sum_{b=1}^d\int_{t_n}^se_{ab}(u)\sum_{i=1}^d\frac{\partial y_b(u)}{\partial x_i}\sum_{j=1}^m\left[\left(\sum_{c=1}^dA_j^{ic}X_c(u)\right)+g_{i,j}(u)\right]\,\mathrm{d}W_j(u)\label{BEA}\\
&\;+\sum_{b=1}^d\int_{t_n}^se_{ab}(u)\frac{1}{2}\sum_{i=1}^d\sum_{j=1}^d\frac{\partial^2y_b(u)}{\partial x_i\,\partial x_j}\sum_{c=1}^d\sum_{l=1}^m\left(A_l^{ic}X_c(u)+g_{i,l}(u)\right)\nonumber\\
&\hspace{6cm}\times\left(A_l^{jc}X_c(u)+g_{j,l}(u)\right)\,\mathrm{d}u\nonumber\\
&\;+\sum_{b=1}^d\int_{t_n}^sy_b(u)\underline{A}_0^{ab}e_{ab}(u)\,\mathrm{d}u+\sum_{b=1}^d\int_{t_n}^sy_b(u)\sum_{j=1}^m(-A_j^{ab})e_{ab}(u)\,\mathrm{d}W_j(u)\nonumber\\
&\;+\sum_{b=1}^d\int_{t_n}^s\!\sum_{i=1}^d\frac{\partial y_b(u)}{\partial x_i}\!\sum_{j=1}^m\left[\left(\!\sum_{c=1}^dA_j^{ic}X_c(u)\!\right)+g_{i,j}(u)\right]\!\left(-A_j^{ab}\right)e_{ab}(u)\,\mathrm{d}u.\nonumber
\end{align}
Writing \cref{BEA} in matrix form shows \cref{ExpNegOmegatns}.
\end{proof}

\section[Proof of MEM errors]{Proof of \cref{MEMConditions}}\label{ProofofMEMConditions}

\begin{proof}[Proof of \cref{MEMConditions}]
We first show the bound \cref{WeakBound}. Conditional on $Y_n=X(t_n)=x$, we express
\begin{equation}Y_{n+1}-X(t_n)=D_1+D_2+D_3,\label{DifferenceExpanded}\end{equation}
where
\begin{align*}
D_1&=\exp\left(\Omega^{[1]}(t_n,t_{n+1})\right)x-\exp\left(\Omega(t_n,t_{n+1})\right)x,\\
D_2&=\exp\left(\Omega^{[1]}(t_n,t_{n+1})\right)\int_{t_n}^{t_{n+1}}\tilde{f}(t_n,X(t_n),\phi_{\sigma_b}(t_n))\,\mathrm{d}s\\
&\quad-\exp\left(\Omega(t_n,t_{n+1})\right)\int_{t_n}^{t_{n+1}}\exp\left(-\Omega(t_n,s)\right)\tilde{f}(s,X(s),\phi_{\sigma_b}(s))\,\mathrm{d}s,\\
D_3&=\exp\left(\Omega^{[1]}(t_n,t_{n+1})\right)\sum_{j=1}^m\int_{t_n}^{t_{n+1}}g_j(t_n,X(t_n),\phi_{\sigma_b}(t_n))\,\mathrm{d}W_j(s)\\
&\quad-\exp\left(\Omega(t_n,t_{n+1})\right)\sum_{j=1}^m\int_{t_n}^{t_{n+1}}\exp\left(-\Omega(t_n,s)\right)g_j(s,X(s),\phi_{\sigma_b}(s))\,\mathrm{d}W_j(s).
\end{align*}
To simplify notation, we write $\mathbb{E}_x[\cdot]=\mathbb{E}[\cdot|Y_n=X(t_n)=x]$. Using \cref{NewLemma3}, we evaluate
\begin{equation*}
\mathbb{E}_x[D_1]=\left(R_1-\sum_{z=3}^\infty\frac{(A_0\,h)^z}{z!}\right)x,
\end{equation*}
showing
\begin{equation}\abs{\mathbb{E}_x[D_1]}\leqslant M\abs{x}h^2\leqslant M\left(1+\abs{x}^2\right)^{1/2}\,h^2,\label{D1WeakMEM}\end{equation}
for constant $M>0$. The term $D_2$ is re-expressed by subtracting and adding the value $\exp(\Omega(t_n,t_{n+1}))\int_{t_n}^{t_{n+1}}\tilde{f}(t_n)\,\mathrm{d}s$, to become
\begin{align}
D_2&=\left[\exp(\Omega^{[1]}(t_n))-\exp(\Omega(t_n))\right]\int_{t_n}^{t_{n+1}}\tilde{f}(t_n)\,\mathrm{d}s\label{D2Difference}\\
&\quad-\exp(\Omega(t_n,t_{n+1}))\int_{t_n}^{t_{n+1}}\exp(-\Omega(t_n,s))\tilde{f}(s)-\tilde{f}(t_n)\,\mathrm{d}s.
\end{align}
Similarly to above, \cref{NewLemma3} shows that the first term of \cref{D2Difference} satisfies
\begin{equation*}
\abs{\mathbb{E}_x\left[\left(\exp\left(\Omega^{[1]}(t_n,t_{n+1})\right)-\exp(\Omega(t_n,t_{n+1}))\right)\int_{t_n}^{t_{n+1}}\tilde{f}(t_n)\,\mathrm{d}s\right]}\leqslant ch^3,
\end{equation*}
for constant $c>0$. As for the second term of \cref{D2Difference}, we use \cref{ExpNegOmegatns} to show
\[\abs{\mathbb{E}_x\left[-\exp(\Omega(t_n,t_{n+1}))\int_{t_n}^{t_{n+1}}\left(\exp(-\Omega(t_n,s))\tilde{f}(s)-\tilde{f}(t_n)\right)\,\mathrm{d}s\right]}\leqslant ch^2,\]
for constant $c>0$. From these, it follows that
\begin{equation}\abs{\mathbb{E}_x[D_2]}\leqslant M\abs{x}h^2\leqslant M\left(1+\abs{x}^2\right)^{1/2}\,h^2,\label{D2WeakMEM}\end{equation}
for constant $M>0$. The third term $D_3$ is treated similarly to $D_2$, where we subtract and add
\[\sum_{j=1}^m\exp(\Omega(t_n,t_{n+1}))\int_{t_n}^{t_{n+1}}g_j(t_n)\,\mathrm{d}W_j(s)\]
to express
\begin{align}
D_3&=\left[\exp\left(\Omega^{[1]}(t_n,t_{n+1})\right)-\exp(\Omega(t_n,t_{n+1}))\right]\sum_{j=1}^m\int_{t_n}^{t_{n+1}}g_j(t_n)\,\mathrm{d}W_j(s)\nonumber\\
&\quad-\exp(\Omega(t_n,t_{n+1}))\sum_{j=1}^m\int_{t_n}^{t_{n+1}}\left(\exp(-\Omega(t_n,s))g_j(s)-g_j(t_n)\right)\,\mathrm{d}W_j(s).\label{D3}
\end{align}
Using Taylor expansions, we find that the first term of \cref{D3} satisfies
\begin{align*}
&\bigg|\mathbb{E}_x\bigg[\left(\exp\left(\Omega^{[1]}(t_n,t_{n+1})\right)-\exp(\Omega(t_n,t_{n+1}))\right)\sum_{j=1}^m\int_{t_n}^{t_{n+1}}g_j(t_n)\,\mathrm{d}s\bigg]\bigg|\\
&\quad\leqslant\bigg|\mathbb{E}\bigg[\left(\frac{-1}{2}\sum_{i=1}^mA_i^2\,h+R\right)\sum_{j=1}^mg_j(t_n)\int_{t_n}^{t_{n+1}}\,\mathrm{d}W_j(s)\bigg]\bigg|\leqslant Mh^2,
\end{align*}
where $M>0$ is constant and the remainder $R$ satisfies $\abs{\mathbb{E}[R]}\leqslant ch^2$ for another constant $c>0$. The second term of \cref{D3} is treated similarly to the case for $D_2$, using \cref{NewLemma4}, which shows
\begin{align}
\mathbb{E}_x&\bigg[-\exp(\Omega(t_n,t_{n+1}))\sum_{j=1}^m\int_{t_n}^{t_{n+1}}\exp(-\Omega(t_n,s))g_j(s)-g_j(t_n)\,\mathrm{d}W_j(s)\bigg]\nonumber\\
&=\mathbb{E}_x\bigg[-\exp(\Omega(t_n,t_{n+1}))\sum_{j=1}^m\int_{t_n}^{t_{n+1}}\int_{t_n}^sF(u)\,\mathrm{d}u,\label{BAA}\\
&\hspace{3cm}+\sum_{i=1}^m\int_{t_n}^sG_i(u)\,\mathrm{d}W_i(u)\,\mathrm{d}W_j(s)\bigg],\nonumber
\end{align}
for processes $F,G_1,\ldots,G_m$ satisfying
\[\sup_{u\in[t_n,t_{n+1}]}\mathbb{E}_x\left[\abs{F(u)}^2\right]<\infty\quad\textrm{and}\quad\sup_{u\in[t_n,t_{n+1}]}\mathbb{E}_x\left[\abs{G_j(u)}^2\right]<\infty,\quad j=1,\ldots,m.\]
Applying the H\"older inequality shows \cref{BAA} has magnitude bounded by a constant multiple of $h^2$. Therefore
\begin{equation}\mathbb{E}_x[D_3]\leqslant M\left(1+\abs{x}^2\right)^{1/2}\,h^2,\label{D3WeakMEM}\end{equation}
for constant $M>0$. By combining \cref{D1WeakMEM}, \cref{D2WeakMEM}, and \cref{D3WeakMEM}, it follows that \cref{WeakBound} holds, with $q_1=2$.

To show \cref{MSPart}, we again use the decomposition \cref{DifferenceExpanded} and \cref{NewLemma3}, with which we calculate
\begin{equation}\mathbb{E}_x\left[\abs{D_1}^2\right]^{1/2}=\mathbb{E}_x\bigg[\norm{\frac{h}{2}\sum_{i=1}^mA_i^2+R}^2\abs{x}^2\bigg]^{1/2}\leqslant M\abs{x}\,h,\label{D1MSMEM}\end{equation}
for constant $M>0$ and remainder $R$ consisting of higher-order (in $h$) terms. Using \cref{D2Difference}, we calculate
\begin{equation}\mathbb{E}_x\left[\abs{D_2}^2\right]^{1/2}\leqslant c_1 h^{3/2} + c_2 h^2 \leqslant Mh^{3/2},\label{D2MSMEM}\end{equation}
for constant $M>0$. We also calculate
\begin{equation}\mathbb{E}_x\left[\abs{D_3}^2\right]^{1/2}\leqslant Mh,\label{D3MSMEM}\end{equation}
for constant $M>0$, similarly. By combining \cref{D1MSMEM}, \cref{D2MSMEM}, and \cref{D3MSMEM}, the bound \cref{MSPart} holds, with $q_2=1$.
\end{proof}

\section[Proof of MM errors]{Proof of \cref{MMConditions}}\label{ProofofMMConditions}

\begin{proof}[Proof of \cref{MMConditions}]
We again let
\begin{equation}Y_{n+1}-X(t_{n+1})=D_1+D_2+D_3\label{MMDifferenceExpanded},\end{equation}
conditional on $Y_n=X(t_n)=x$, where
\begin{align*}
D_1&=\left(\exp\left(\Omega^{[2]}(t_n,t_{n+1})\right)-\exp(\Omega(t_n,t_{n+1}))\right)x,\\
D_2&=\exp\left(\Omega^{[2]}(t_n,t_{n+1})\right)\int_{t_n}^{t_{n+1}}\tilde{f}(t_n)\,\mathrm{d}s\\
&\quad-\exp(\Omega(t_n,t_{n+1}))\int_{t_n}^{t_{n+1}}\exp(-\Omega(t_n,s))\tilde{f}(s)\,\mathrm{d}s,\\
D_3&=\exp\left(\Omega^{[2]}(t_n,t_{n+1})\right)\sum_{j=1}^m\bigg\{\int_{t_n}^{t_{n+1}}g_j(t_n)\,\mathrm{d}W_j(s)\\
&\hspace{3cm}+\sum_{i=1}^m\left(\nabla_xg_j(t_n)[A_iY_n+g_i(t_n)]-A_ig_j(t_n)\right)\,I_{ij}(t_n,t_{n+1})\\
&\hspace{3cm}+\sum_{k=1}^K\mathbb{I}(t_n\geqslant\tau_k)\sum_{i=1}^m\nabla_{\tau_k}g_j(t_n)[A_iY_n^{\tau_k}+g_i(t_n^{\tau_k})]\,I_{ij}^{\tau_k}(t_n,t_{n+1})\bigg\}\\
&\hspace{1cm}-\exp(\Omega(t_n,t_{n+1}))\sum_{j=1}^m\int_{t_n}^{t_{n+1}}\exp(-\Omega(t_n,s))g_j(s)\,\mathrm{d}W_j(s).
\end{align*}
Using \cref{NewLemma3}, we calculate
\begin{equation}\abs{\mathbb{E}_x[D_1]}\leqslant M\abs{x}\,h^2,\label{D1WeakMM}\end{equation}
for constant $M>0$. The remainder of this proof is similar to that for \cref{MEMConditions}, and shows
\begin{equation}\abs{\mathbb{E}_x[D_2]}\leqslant M_2\,h^2,\label{D2WeakMM}\end{equation}
and
\begin{equation}\abs{\mathbb{E}_x[D_3]}\leqslant M_3\,h^2,\label{D3WeakMM}\end{equation}
for constants $M_2,M_3>0$. Combining \cref{D1WeakMM}, \cref{D2WeakMM}, and \cref{D3WeakMM} shows \cref{WeakBound}, with $q_1=2$.

To show \cref{MSPart}, we again use \cref{MMDifferenceExpanded} and \cref{NewLemma3}, to calculate
\begin{align*}
\mathbb{E}\left[\abs{D_1}^2\right]^{1/2}&\leqslant M_1\abs{x}\,h^{3/2},\\
\mathbb{E}\left[\abs{D_2}^2\right]^{1/2}&\leqslant M_2\,h^{3/2},\\
\mathbb{E}\left[\abs{D_3}^2\right]^{1/2}&\leqslant M_3\,h^{3/2},
\end{align*}
for constants $M_1,M_2,M_3>0$, which shows the result, with $q_2=3/2$.
\end{proof}

\section*{Link to Matlab scripts}
The MATLAB R2024a scripts used to generate the figures in this paper are available at:\\
\url{https://github.com/Mitchell-Griggs/MagnusSDDEsScripts}

\end{document}